\newtheorem{Thm}{Theorem}[section]
\newtheorem{Prop}[Thm]{Proposition}
\newtheorem{Lem}[Thm]{Lemma}
\newtheorem{Cor}[Thm]{Corollary}
\theoremstyle{definition}
\newtheorem{Def}[Thm]{Definition}
\newtheorem{Ex}[Thm]{Example}
\newtheorem{Conj}[Thm]{Conjecture}
\newtheorem{ins}{INS\'ER\'E}
\newcommand\HNN{\textnormal{HNN}}
\newcommand\Aut{\textnormal{Aut}}
\newcommand\BS{\textnormal{BS}}
\newcommand\Ker{\textnormal{Ker}}
\newcommand\Z{\mathbf{Z}}
\newcommand\C{\mathbf{C}}
\newcommand\Q{\mathbf{Q}}
\newcommand\R{\mathbf{R}}
\newcommand\K{\mathbf{K}}
\newcommand\p{\mathbf{P}}
\newcommand\F{\mathbf{F}}
\newcommand\apx{\stackrel{\centerdot}\approx}
\newcommand{\Conv}{\mathop{\scalebox{2.0}{\raisebox{-0.25ex}{$\ast$}}}}%
\newcommand\bbi{\begin{ins}\marginpar{!!!!!!!!!!!!!!!!!!!}}
\newcommand\eei{\end{ins}}
\begin{document}

\title[Embeddings of generalized Baumslag-Solitar groups]{On equivariant embeddings of generalized Baumslag-Solitar groups}

\author[Y.~Cornulier]{Yves Cornulier}
\address[Y.C.]{Laboratoire de Math\'ematiques\\
B\^atiment 425, Universit\'e Paris-Sud 11\\
91405 Orsay\\FRANCE}
\email{yves.cornulier@math.u-psud.fr}

\author[A.~Valette]{Alain Valette}
\address[A.V.]{Institut de Math\'ematiques\\
Rue Emile Argand 11\\
Unimail\\
CH-2000 Neuch\^atel\\
SWITZERLAND}
\email{alain.valette@unine.ch}
\date{December 30, 2012}

\subjclass[2010]{Primary 20E08; Secondary 20F69, 30L05, 43A07, 46B85}%


\baselineskip=16pt

\maketitle

\begin{abstract} Let $G$ be a group acting cocompactly without inversion on a tree $X$, with all vertex and edge stabilizers isomorphic to the same free abelian group $\Z^n$. We prove that $G$ has the Haagerup Property if and only if $G$ is weakly amenable, and we give a necessary and sufficient condition for this to happen. In particular, denoting by $d$ the rank of the fundamental group of the graph $G\backslash X$, we deduce that $G$ has the Haagerup Property if either $d=0$, or $d=1$, or $n=1$. In these three cases, we show that the $L^p$-compression rate of $G$ is 1, and that its equivariant $L^p$-compression rate is $\max\{1/p,1/2\}$ (provided $G$ is non-amenable). We also discuss quasi-isometric embeddings of $G$ into a product of finitely many regular trivalent trees.
\end{abstract}

\section{Introduction}

The class of groups studied in this paper is the class $\mathcal{GBS}$ of Generalized Baumslag-Solitar groups, i.e.\ the groups acting cocompactly without inversion on a tree, with all vertex and edge stabilizers being free abelian groups of the same finite rank. If this rank is $n$, we denote by $\mathcal{GBS}_n$ the corresponding subclass. These groups have been considered e.g.\ in \cite{Beeker,Whyte2}. The ordinary Baumslag-Solitar groups are obtained for $n=1$ and actions on trees that are transitive on vertices and edges.

This paper consists of two rather distinct parts, however linked by the employed techniques. In the first part, we deal with weak forms of amenability, namely {\it Haagerup Property} (a.k.a.\ {\it a-T-menability}) and {\it weak amenability}.

\begin{Def} A locally compact group $G$ has the {\it Haagerup Property} if there exists a net of continuous $C_0$ positive definite functions on $G$, and converging to 1 uniformly on compact sets.
\end{Def}

\begin{Def}\label{wa} A locally compact group $G$ is {\it weakly amenable} if there exists a net $(\varphi_i)_{i\in I}$ of continuous functions with compact support on $G$, converging to 1 uniformly on compact sets, such that $\sup_{i\in I}\|\varphi_i\|_{\textnormal{cb}}<+\infty$.
\end{Def}

Here $\|.\|_{\textnormal{cb}}$ denotes the completely bounded norm on multipliers of the Fourier algebra of $G$. If $G$ is weakly amenable, the {\it Cowling-Haagerup constant} $\Lambda(G)$ is the best possible constant $\Lambda$ such that $\sup_{i\in I}\|\varphi_i\|_{\textnormal{cb}}\leq\Lambda$, where $(\varphi_i)_{i\in I}$ is a net as in Definition \ref{wa}.

We refer to \cite{CCJJV} (resp.\ \cite{CH}) for background on the Haagerup Property (resp.\ weak amenability).

Motivated by the study of the class $\mathcal{GBS}$, we consider in section 2 the situation of a discrete group $\Gamma$ endowed with a linear representation $\rho$ on $\R^n$, and study the question of the Haagerup Property for the semidirect product $\R^n\rtimes_\rho\Gamma$; we prove a result of independent interest:

\begin{Prop} Assume that $\Gamma$ has the Haagerup Property (respectively is weakly amenable). The following are equivalent:
\begin{enumerate}[(i)]
  \item $\R^n\rtimes_\rho\Gamma$ has the Haagerup Property (resp.\ is weakly amenable);
  \item $\R^n$ carries a $(\R^n\rtimes_\rho\Gamma)$-invariant mean on Borel subsets;
  \item the closure $\overline{\rho(\Gamma)}$ in $\textnormal{GL}_n(\R)$, is amenable.
\end{enumerate}
\end{Prop}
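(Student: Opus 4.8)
The plan is to treat the two equivalences (ii)$\Leftrightarrow$(iii) and (i)$\Leftrightarrow$(iii) separately: the first is a purely amenability-theoretic statement about the affine action of $G$ on $\R^n$ and does not use the standing hypothesis on $\Gamma$, whereas the second is where the Haagerup property (resp.\ weak amenability) of $\Gamma$ enters. Throughout I write $H=\overline{\rho(\Gamma)}\le\textnormal{GL}_n(\R)$ and $L=\R^n\rtimes H$, the closure of the image of $G$ in the affine group; since $H$ is a closed, hence Lie, subgroup, $L$ is locally compact and $G$ maps into $L$ by $(v,\gamma)\mapsto(v,\rho(\gamma))$.

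For (iii)$\Rightarrow$(ii): if $H$ is amenable then $L=\R^n\rtimes H$ is amenable, being an extension of the amenable group $H$ by the abelian (hence amenable) normal subgroup $\R^n$. The affine action of $L$ on $\R^n$ is transitive with stabilizer $H$, so $\R^n\cong L/H$ as an $L$-space; pushing an invariant mean on $L^\infty(L)$ forward along $L\to L/H$ yields an $L$-invariant mean on $\R^n$, which restricts to the desired $G$-invariant mean. For the converse (ii)$\Rightarrow$(iii), I would first upgrade $\rho(\Gamma)$-invariance of the mean $m$ to $H$-invariance by a weak-$*$ continuity argument, so that $m$ is invariant under the whole affine action of $L$. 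The engine is then a pushforward to the sphere at infinity: the radial projection $\phi\colon\R^n\setminus\{0\}\to\p(\R^n)$ is equivariant for the linear action, and because $m$ is invariant under the linear part $H$ (and gives mass $0$ to the origin, by translation invariance) the formula $m'(f)=m(f\circ\phi)$ defines an $H$-invariant mean $m'$ on $\p(\R^n)$. A Furstenberg-type dichotomy finishes the job: after reducing to an irreducible subquotient of the representation, a strongly irreducible linear group carrying an invariant mean on its projective space must be relatively compact modulo homotheties, hence amenable; feeding the amenable subquotients back up a composition series (the kernels being unipotent, hence amenable) shows $H$ itself is amenable.

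The clean part is (iii)$\Rightarrow$(i), which I would prove by an explicit embedding rather than by transferring the mean. Consider
$$G=\R^n\rtimes_\rho\Gamma\;\longrightarrow\;(\R^n\rtimes H)\times\Gamma,\qquad (v,\gamma)\mapsto\big((v,\rho(\gamma)),\gamma\big).$$
A direct check shows this is an injective, continuous homomorphism onto a closed subgroup (the $\Gamma$-coordinate is discrete, so the image meets each slice in a closed affine copy of $\R^n$). Now $\R^n\rtimes H$ is amenable, hence has the Haagerup property and is weakly amenable with Cowling--Haagerup constant $1$, while $\Gamma$ has the Haagerup property (resp.\ is weakly amenable) by hypothesis. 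Both properties are stable under finite direct products --- with $\Lambda$ multiplicative --- and pass to closed subgroups without increasing $\Lambda$; therefore $(\R^n\rtimes H)\times\Gamma$, and hence its closed subgroup $G$, has the Haagerup property (resp.\ is weakly amenable, with $\Lambda(G)\le\Lambda(\Gamma)$). Note that this direction uses (iii) directly and not (ii).

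The main obstacle is the remaining implication (i)$\Rightarrow$(iii), which I would establish in contrapositive form: if $H$ is non-amenable I claim $G$ fails both properties. By the Tits alternative $\rho(\Gamma)$ then contains a non-abelian free group, and passing to an irreducible subquotient on which the image is non-amenable produces, inside $G$, a closed subgroup of the form $\R^k\rtimes\Lambda$ for which the pair $(\R^k\rtimes\Lambda,\R^k)$ has the relative property (T) --- here the strong irreducibility and non-amenability are used, via the non-existence of invariant means established in the previous step. Relative property (T) with respect to the non-compact subgroup $\R^k$ is incompatible with the Haagerup property, and, by the Dorofaeff-type lower bounds on completely bounded multipliers for such affine pairs, forces the Cowling--Haagerup constant to be infinite, ruling out weak amenability as well. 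Since both properties are inherited by closed subgroups, the failure for $\R^k\rtimes\Lambda$ propagates to $G$. I expect to spend the most effort precisely on this passage from non-amenability of the linear closure to a genuine relative property (T) pair with the attendant multiplier estimates; the rest of the argument is formal permanence.
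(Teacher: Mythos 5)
Most of your architecture matches the paper's: (iii)$\Rightarrow$(i) via the closed embedding of $\R^n\rtimes_\rho\Gamma$ into $(\R^n\rtimes\overline{\rho(\Gamma)})\times\Gamma$ and permanence of both properties under products and closed subgroups is exactly the paper's argument, and your (ii)$\Leftrightarrow$(iii) via pushing the mean to projective space and a Furstenberg dichotomy is the paper's Lemma 2.6 in slightly compressed form. Two caveats there: the ``upgrade $\rho(\Gamma)$-invariance of $m$ to $H$-invariance by weak-$*$ continuity'' is not legitimate at the level of means on $L^\infty(\R^n)$ (the stabilizer of a mean need not be closed, since $L^\infty$ is not a continuous $\textnormal{GL}_n(\R)$-module); you must first push forward to the compact projective space, where invariant \emph{measures} do have closed stabilizers --- the paper sidesteps this by working with $\Gamma$-invariance throughout. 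Also, a translation-invariant mean only descends to \emph{quotients} $\R^n/V$, not to subspaces, so the recursion over ``irreducible subquotients'' has to be organized through the dual representation (minimal invariant subspace of $\R^{n*}$ on which the closure is non-amenable), which is precisely why the paper introduces $\rho^*$.

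The genuine gap is in (i)$\Rightarrow$(iii) for the weak amenability case. Your mechanism --- produce a pair $(\R^k\rtimes\Lambda,\R^k)$ with relative Property (T), then invoke ``Dorofaeff-type lower bounds'' to force $\Lambda(G)=\infty$ --- does not exist as a theorem in this generality, and the underlying heuristic is false: relative Property (T) with respect to a non-compact subgroup is perfectly compatible with weak amenability (e.g.\ $\textnormal{Sp}(1,n)$ has Property (T) and $\Lambda(\textnormal{Sp}(1,n))=2n-1$). Dorofaeff's actual results concern specific connected Lie groups such as $\R^n\rtimes\textnormal{SL}_2(\R)$, not semidirect products with a discrete non-amenable linear group. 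The paper instead routes this implication through the \emph{mean}: by Ozawa's Theorem A in \cite{Oz}, if $\R^n\rtimes\Gamma$ is weakly amenable then $\R^n$ carries an invariant mean, i.e.\ $\neg$(ii)$\Rightarrow\neg$(weak amenability); combined with (ii)$\Leftrightarrow$(iii) this closes the loop. Relative Property (T) is only used for the Haagerup half, where your argument (and the paper's, via Proposition 3.1.9 of \cite{Cor}) is correct. Without Ozawa's theorem or an equivalent substitute, the weak amenability direction of your proof does not go through. (The detour through the Tits alternative is harmless but unnecessary.)
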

Note that the $(\R^n\rtimes_\rho\Gamma)$-invariance means that the mean is both invariant by the action of $\Gamma$ and by the action of translations.

We now describe our main results. Given $G\in \mathcal{GBS}_n$, Bass-Serre theory \cite{Se} allows to view $G$ as the Bass-Serre fundamental group of a graph of groups $(X,\Z^n)$, where $X$ is a finite graph. The choice of a maximal tree $T$ in $X$ provides a presentation of $G$ including $d$ stable letters, where $d$ is the number of edges of $X$ not in $T$ (so $d$ is also the rank of the usual fundamental group of $X$). These $d$ stable letters generate inside $G$ a copy $H_d$ of the free group of rank $d$, so that $G$ appears as a semidirect product $G=N\rtimes H_d$, where $N$ is the normal subgroup generated by vertex groups. Our first task will be to construct a homomorphism $\mu$ from $G$ to the affine group $\R^n\rtimes \textnormal{GL}_n(\R)$, such that $\mu(N)\subset \R^n$ (actually, for every vertex group $G_v$, the image $\mu(G_v)$ is a lattice in $\R^n$), while $\mu(H_d)\subset \textnormal{GL}_n(\R)$. The construction is the natural generalization of Levitt's modular homomorphism for $n=1$ \cite[Section 2]{Lev}.

\begin{Ex}\label{goodexamples}
\item[1)] Consider the Baumslag-Solitar group $\BS(m,n)$ in its standard presentation:
$$\BS(m,n)=\langle t,b|tb^mt^{-1}=b^n\rangle.$$
The homomorphism $\mu: \BS(m,n)\rightarrow \R\rtimes \R^\times$ consists of mapping $b$ to the translation by 1 and $t$ to the dilation by $\frac{n}{m}$; it plays a crucial role in the Gal-Januszkiewicz paper \cite{GaJa}, who used it to prove that $\BS(m,n)$ has the Haagerup Property (the same proof also showing weak amenability with constant 1, even if this not explicitly mentioned).
\item[2)] Realize the free group $\F_2$ on 2 generators in $\textnormal{SL}_2(\Z)$, as the subgroup generated by the matrices $\left(\begin{array}{cc}1 & 2 \\0 & 1\end{array}\right)$ and $\;\left(\begin{array}{cc}1 & 0 \\2 & 1\end{array}\right)$. Consider the semidirect product $\Z^2\rtimes\F_2$, and let it act on the Cayley tree of $\F_2$ by means of the quotient map: this action has all stabilizers isomorphic to $\Z^2$, however, as it is well-known, $\Z^2\rtimes\F_2$ neither has the Haagerup Property, nor is weakly amenable; this shows that Haagerup Property or weak amenability are not preserved under graphs of groups. Here $\mu$ is just the inclusion of $\Z^2\rtimes \F_2$ into $\R^2\rtimes \textnormal{GL}_2(\R)$.
\end{Ex}

Coming back to the general case: let $\overline{\iota(G)}$ be the closure of the image of $G$ in the automorphism group of the Bass-Serre tree associated with $(X,\Z^n)$. Generalizing a result of Gal and Januzkiewicz \cite{GaJa} for $\BS(m,n)$, we first realize $G$ as a discrete subgroup in a product of a totally disconnected group and a (not connected) Lie group:

\begin{Prop}\label{proper} The inclusion $G\rightarrow\overline{\iota(G)}\times(\R^n\rtimes \mu(H_d)):g\mapsto(g,\mu(g))$ is proper.
\end{Prop}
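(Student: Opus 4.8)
The plan is to show that the map $g \mapsto (g, \mu(g))$ is proper by verifying that the preimage of every compact set is finite (since $G$ is discrete). A compact subset of the target $\overline{\iota(G)} \times (\R^n \rtimes \mu(H_d))$ is contained in $K_1 \times K_2$, where $K_1 \subset \overline{\iota(G)}$ and $K_2 \subset \R^n \rtimes \mu(H_d)$ are compact. So I must show that $\{g \in G : \iota(g) \in K_1,\ \mu(g) \in K_2\}$ is finite. The natural strategy is to analyze the two coordinates separately, using the semidirect product decomposition $G = N \rtimes H_d$ and the complementary roles that $\iota$ and $\mu$ play: the tree action $\iota$ controls the ``vertex group'' directions while $\mu$ controls both the translation part $\R^n$ (detecting elements of $N$) and the $\mathrm{GL}_n$ part (detecting the $H_d$-component).

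First I would handle the $H_d$-coordinate. Since $\mu(H_d) \subset \mathrm{GL}_n(\R)$ is a copy of the free group $H_d$ sitting discretely (one should check $\mu|_{H_d}$ is injective with discrete image, which follows from the construction generalizing Levitt's modular homomorphism), the condition $\mu(g) \in K_2$ constrains the $H_d$-component of $g$ to lie in a finite set: projecting $K_2 \subset \R^n \rtimes \mu(H_d)$ onto the $\mu(H_d)$ factor gives a compact, hence finite, subset of the discrete group $\mu(H_d)$. Thus up to finitely many choices of the $H_d$-part, I may assume $g$ ranges over a single coset, and it remains to bound the $N$-component.

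Next I would control the $N$-component $n \in N$ using \emph{both} remaining pieces of data. The image $\mu(N) \subset \R^n$ consists of translations, and $\mu(g) \in K_2$ forces $\mu(n)$ into a bounded (compact) region of $\R^n$; this gives finiteness \emph{within each vertex group}, since $\mu(G_v)$ is a lattice in $\R^n$ and a bounded region of a lattice is finite. The tree action $\iota(g) \in K_1$ then pins down which vertex (and edge) stabilizers are relevant: a compact subset of $\overline{\iota(G)}$ fixes a common point/bounded region of the Bass-Serre tree $X$, and cocompactness together with the fact that $G$ acts without inversion means only finitely many vertex orbits are involved, so $N \cap (\iota^{-1}(K_1))$ meets only finitely many vertex groups up to conjugacy. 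Combining, the element $n$ lies in a finite union of lattice-balls, hence in a finite set.

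The main obstacle I anticipate is the interaction between the two coordinates in controlling $N$: neither $\iota$ nor $\mu$ alone is proper on all of $G$ (indeed $\mu$ kills the intersections of conjugates of vertex groups that the edge relations identify, and $\iota$ is far from injective on vertex stabilizers), so the crux is to verify that the \emph{combined} kernel-and-boundedness conditions leave only finitely many elements. Concretely, the delicate point is that an element of $N$ fixing a large bounded subtree (from the $\iota(g) \in K_1$ condition) and having bounded translation length in $\R^n$ (from $\mu(g) \in K_2$) must be trivial or lie in a finite set: one must rule out an infinite ``drift'' along the tree that $\mu$ fails to see. I would resolve this by using that $\iota(g) \in K_1$ bounds the displacement in $X$, so $g$ lies in a bounded neighborhood of a fixed point, whence $g$ lies in finitely many double cosets of vertex groups; on each such piece $\mu$ is proper because $\mu(G_v)$ is a discrete lattice. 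Carefully bookkeeping these finitely many contributions yields finiteness of the full preimage, establishing properness.
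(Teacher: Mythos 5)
Your closing paragraph does contain the correct argument, and it is essentially the paper's: the paper packages it as Lemma \ref{properness}, which reduces properness of the diagonal $G$-action on $\tilde{X}\times(\R^n\rtimes_\mu H_d)$ (using local finiteness of $\tilde{X}$, which holds because the edge groups have finite index in the vertex groups) to properness of the action of a single vertex stabilizer $G_{v_0}\cong\Z^n$ on $\R^n\rtimes_\mu H_d$, which is immediate because $\mu(G_{v_0})$ is a lattice in $\R^n$. Concretely: $\iota(g)\in K_1$ forces $g\tilde{v}_0$ into a finite set of vertices, hence confines $g$ to finitely many \emph{left} cosets $g_iG_{v_0}$ --- not double cosets, as you write: $\mu$ is generally not proper on $G_{v_0}g_iG_{v_0}$, since $\mu(ag_ib)$ has translation part $\mu(a)+A_i\mu(b)$ with $A_i=\mu(\varphi(g_i))$, and this can take a fixed value for infinitely many pairs $(a,b)$. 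On each left coset $\mu$ is injective with discrete image, which finishes the proof; note that this already handles all of $g$, so your preliminary reduction of the $H_d$-component is not needed.

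That preliminary step is also where your proposal contains a genuine error: $\mu|_{H_d}$ need not be injective (if $\sigma_e=\sigma_{\overline{e}}$ then $\mu(t_e)=\Id$), and $\mu(H_d)$ need not be discrete in $\textnormal{GL}_n(\R)$ --- already for $n=1$, $d=2$ with $\mu(t_1)=2$ and $\mu(t_2)=3$ the image $2^{\Z}3^{\Z}$ is dense in $\R_{>0}$. The possible non-discreteness of $\overline{\mu(H_d)}$ is precisely the phenomenon the rest of the paper studies, so it cannot be assumed away; the statement has to be read as in Proposition \ref{Glattice}, with target $\R^n\rtimes_\mu H_d$ where $H_d$ carries the discrete topology, in which case the $H_d$-component of the image of $g$ is just $\varphi(g)$ and compactness of $K_2$ trivially confines it to a finite set. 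A second, smaller imprecision: in your middle paragraph you treat the relevant $n\in N$ as lying in finitely many vertex groups, but $N$ is the normal closure of the vertex groups and generically contains hyperbolic elements fixing no vertex, so the finiteness really must come from the displacement bound $\iota(g)\in K_1$, as you correctly do at the end.
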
 

From this we deduce our first main result, namely that the closure $\overline{\mu(H_d)}$ (in the standard topology of $\textnormal{GL}_n(\R)$) controls both Haagerup Property and weak amenability with constant 1 for $G$.

\begin{Thm}\label{Mainthm} For $G\in \mathcal{GBS}$, the following are equivalent:
\begin{enumerate}
\item[i)] $\overline{\mu(H_d)}$ is amenable;
\item[ii)] $G$ has the Haagerup Property;
\item[iii)] $G$ is weakly amenable;
\item[iv)] $\Lambda(G)=1$.
\end{enumerate}
\end{Thm}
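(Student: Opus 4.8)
The plan is to prove the chain of implications $\mathrm{(i)} \Rightarrow \mathrm{(ii)} \Rightarrow \mathrm{(iii)} \Rightarrow \mathrm{(iv)} \Rightarrow \mathrm{(i)}$, where the third and fourth implications are routine: indeed, the Haagerup Property always implies weak amenability with Cowling--Haagerup constant $1$ (this is a standard fact, since $C_0$ positive definite functions are completely bounded multipliers of norm equal to their sup-norm), so $\mathrm{(ii)} \Rightarrow \mathrm{(iv)}$, and $\mathrm{(iv)} \Rightarrow \mathrm{(iii)}$ is trivial by definition of $\Lambda(G)$. The whole content therefore lies in the two implications linking amenability of $\overline{\mu(H_d)}$ to the analytic properties, and the engine driving both is Proposition \ref{proper}, which embeds $G$ properly into the product $\overline{\iota(G)} \times (\R^n \rtimes \mu(H_d))$.

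\textbf{Implication $\mathrm{(i)} \Rightarrow \mathrm{(ii)}$ (and weak amenability with constant $1$).} First I would observe that the totally disconnected factor $\overline{\iota(G)}$ is a closed subgroup of the automorphism group of a tree acting properly on that tree, hence it has the Haagerup Property (by the classical result that groups acting properly on trees are a-T-menable), and is moreover weakly amenable with constant $1$ (Haagerup's original result for free groups extends to groups acting properly on trees). For the Lie factor, I would invoke the Proposition stated in the introduction applied to $\Gamma = \overline{\mu(H_d)}$ acting on $\R^n$: assuming $\overline{\mu(H_d)}$ is amenable, condition (iii) of that Proposition holds, so $\R^n \rtimes \overline{\mu(H_d)}$ has the Haagerup Property (and is weakly amenable). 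Since $\mu(H_d) \subseteq \overline{\mu(H_d)}$, the group $\R^n \rtimes \mu(H_d)$ sits inside $\R^n \rtimes \overline{\mu(H_d)}$, and the relevant properties pass to closed subgroups. Both factors of the product thus have the Haagerup Property (resp.\ are weakly amenable with constant $1$), so the product does too, as the Haagerup Property and weak amenability with constant $1$ are stable under finite direct products. Finally, since by Proposition \ref{proper} the inclusion of $G$ into this product is proper, $G$ inherits the Haagerup Property (resp.\ weak amenability with constant $1$): properness guarantees that $C_0$ functions restrict to $C_0$ functions and that positive definiteness and the completely bounded norm are preserved under restriction to a closed subgroup.

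\textbf{Implication $\mathrm{(iii)} \Rightarrow \mathrm{(i)}$.} For the converse, I would argue contrapositively: suppose $\overline{\mu(H_d)}$ is non-amenable. The strategy is to locate a forbidden subgroup inside $G$. The non-amenable closed subgroup $\overline{\mu(H_d)} \subseteq \mathrm{GL}_n(\R)$ must, via its linear action on $\R^n$, fail condition (iii) of the introductory Proposition, so $\R^n \rtimes \overline{\mu(H_d)}$ is neither a-T-menable nor weakly amenable. The goal is then to transfer this obstruction back to $G$ itself; the natural mechanism is to show that $\mu(H_d)$ (or a suitable subgroup), together with the lattices $\mu(G_v) \subset \R^n$, generates inside $G$ a subgroup mapping onto, or closely modeled on, a group of the form $\Z^n \rtimes F$ with $F$ free and non-amenably acting—precisely the obstruction exhibited in Example \ref{goodexamples}(2), where $\Z^2 \rtimes \F_2$ fails both properties. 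Since both the Haagerup Property and weak amenability with constant $1$ pass to subgroups, the presence of such a subgroup in $G$ rules out (ii), (iii) and (iv).

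\textbf{The main obstacle.} I expect the hard part to be implication $\mathrm{(iii)} \Rightarrow \mathrm{(i)}$, specifically the transfer step: one must produce inside the discrete group $G$ a concrete finitely generated subgroup whose non-amenability of the relevant linear closure is genuinely inherited, and then show this subgroup already obstructs weak amenability (the weakest of the three analytic hypotheses). This requires care because the amenability in (i) is about the \emph{closure} of $\mu(H_d)$ in the Lie topology, whereas $H_d$ itself is discrete and free; the subtlety is to exploit that non-amenability of the closure forces, via the structure of closed subgroups of $\mathrm{GL}_n(\R)$ and the induced action on $\R^n$, an actual obstruction at the level of the discrete data $(\mu(G_v), \mu(H_d))$. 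Controlling how the lattice images $\mu(G_v)$ interact with the non-amenable acting group—ensuring the semidirect product structure is faithfully reflected inside $G$ rather than being collapsed—is where the real work lies.
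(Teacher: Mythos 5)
Your proposed chain breaks at the very first ``routine'' step. You assert that the Haagerup Property implies weak amenability with Cowling--Haagerup constant $1$, ``since $C_0$ positive definite functions are completely bounded multipliers of norm equal to their sup-norm.'' This is false: weak amenability requires a net of \emph{compactly supported} multipliers with uniformly bounded cb-norm, and one cannot truncate a $C_0$ positive definite function while keeping control of the cb-norm. Indeed the paper itself points out (citing \cite{CSV}) that the Haagerup Property does \emph{not} imply weak amenability in general, and whether the converse holds is open. So the implications $\mathrm{(ii)}\Rightarrow\mathrm{(iii)}$ and $\mathrm{(ii)}\Rightarrow\mathrm{(iv)}$ are exactly the non-trivial content here, not free consequences of general theory; the paper has to prove the weak amenability statements separately, by embedding $\R^n\rtimes_\mu H_d$ as a closed subgroup of $H_d\times(\R^n\rtimes\overline{\mu(H_d)})$ and using multiplicativity of $\Lambda$ on products together with $\Lambda=1$ for amenable groups and for groups acting on trees (Szwarc).

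The second gap is in $\mathrm{(iii)}\Rightarrow\mathrm{(i)}$, which you leave as a sketch. Your plan to ``locate a forbidden subgroup'' of the form $\Z^n\rtimes F$ inside $G$ is not how the argument can go in general: non-amenability of the \emph{closure} $\overline{\mu(H_d)}$ does not obviously produce a discrete subgroup of $G$ with relative Property (T), and in any case relative (T) obstructs the Haagerup Property, not weak amenability. The paper's actual mechanism is: $G$ weakly amenable $\Rightarrow$ $W$ weakly amenable (cocompact lattice, via Proposition \ref{Glattice}) $\Rightarrow$ $\R^n\rtimes_\mu H_d$ weakly amenable (closed subgroup of $W$) $\Rightarrow$ $\overline{\mu(H_d)}$ amenable, where the last step is $(v)\Rightarrow(i)$ of Proposition \ref{relT} and rests on Ozawa's theorem (absence of an invariant mean on $\R^n$ kills weak amenability of the semidirect product) combined with the Furstenberg measure argument of Lemma \ref{suffrel(T)}. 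You correctly identify Proposition \ref{proper} as the engine and your $\mathrm{(i)}\Rightarrow\mathrm{(ii)}$ direction is essentially right (modulo citing that cb-norms restrict well to closed subgroups, and applying the introductory Proposition to the discrete group $H_d$ rather than to the non-discrete closure), but as written the proposal does not prove the theorem.
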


For general finitely generated groups, it was proved by the authors along with Y.~Stalder \cite{CSV} that the Haagerup Property does not imply weak amenability; the converse is an open question.

There are three cases in Theorem \ref{Mainthm} where $\overline{\mu(H_d)}$ is clearly amenable, yielding the following corollary:

\begin{Cor}\label{suffHaagerup} In each of the following cases, $G$ has the Haagerup Property and satisfies $\Lambda(G)=1$:
\begin{enumerate}
\item[1)] $d=0$, i.e.\ $X$ is a tree;
\item[2)] $d=1$, i.e.\ $X$ has the homotopy type of a circle;
\item[3)] $n=1$, i.e.\ $G\in \mathcal{GBS}_1$.
\end{enumerate}
\end{Cor}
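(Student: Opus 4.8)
The plan is to derive the corollary directly from Theorem~\ref{Mainthm}, whose equivalence reduces everything to a single verification: in each of the three listed cases, one only needs to check that $\overline{\mu(H_d)}$ is amenable. Indeed, since condition (i) of Theorem~\ref{Mainthm} is equivalent to (ii), (iii) and (iv) simultaneously, amenability of $\overline{\mu(H_d)}$ immediately yields both the Haagerup Property and $\Lambda(G)=1$. So no further analysis of $G$ itself is required; the whole task is to inspect the subgroup $\mu(H_d)\subset\textnormal{GL}_n(\R)$, recalling that $H_d$ is the free group of rank $d$ generated by the stable letters.

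I would then dispatch the three cases in turn. When $d=0$ the group $H_0$ is trivial, hence $\mu(H_0)=\{1\}$ and its closure is the trivial group, which is amenable. When $d=1$ the group $H_1$ is infinite cyclic, so $\mu(H_1)$ is a cyclic, in particular abelian, subgroup of $\textnormal{GL}_n(\R)$; the closure of an abelian subgroup of a topological group is again abelian, so $\overline{\mu(H_1)}$ is an abelian locally compact group, hence amenable. When $n=1$ we have $\mu(H_d)\subset\textnormal{GL}_1(\R)=\R^\times$, which is abelian, so once more $\overline{\mu(H_d)}$ is abelian and therefore amenable. In all three cases condition (i) of Theorem~\ref{Mainthm} holds.

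Applying the equivalence of Theorem~\ref{Mainthm} then gives at once that $G$ has the Haagerup Property (condition (ii)) and that $\Lambda(G)=1$ (condition (iv)), which is exactly the assertion of the corollary.

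Honestly, there is no genuine obstacle here: all the substance resides in Theorem~\ref{Mainthm}, and the corollary is an immediate specialization. The only point deserving a word of care is the elementary but essential fact that closures of abelian subgroups remain abelian (so that amenability is preserved under passing to the closure), and that amenability is the only property of $\overline{\mu(H_d)}$ that Theorem~\ref{Mainthm} actually needs.
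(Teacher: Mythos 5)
Your proposal is correct and follows exactly the route the paper intends: the authors introduce the corollary with the remark that these are precisely the three cases where $\overline{\mu(H_d)}$ is ``clearly amenable,'' and your case-by-case verification (trivial for $d=0$, abelian closure of a cyclic group for $d=1$, subgroup of $\R^\times$ for $n=1$) supplies the details they leave implicit. No gaps.
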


For the usual Baumslag-Solitar groups, this appears in \cite{GaJa}; an observation of Kropholler \cite{Kro} implies that groups in $\mathcal{GBS}_1$ are free-by-\-solvable (actually their second derived subgroup is free), which implies the Haagerup Property; a similar argument can been carried out if $\mu(H_d)$ is solvable, in which case some iterated derived subgroup of $G$ is free (see lemma \ref{Kermufree} below). If $\mu(H_d)$ is not virtually solvable, then we show that $G$ is not free-by-amenable (Proposition \ref{virtsol}), and thus this alternative approach falls apart.

Haagerup Property for amalgamated products of $\Z^n$'s was treated in \cite{CCJJV}. All the other examples provided by Corollary \ref{suffHaagerup}, appear to be new.

\bigskip
Let us move now to the second part of the paper. Let $(X,d)$ be a metric space. In the large scale study of $X$, it is natural to focus on the behavior of coarse embeddings of $X$ into Hilbert spaces or more general Banach spaces. Recall that a coarse map of $X$ into a Banach space $E$ is a mapping $X\to E$ such that 
$$\sup\{\|f(x)-f(y)\|:\;x,y\in X,\;d(x,y)\le r\}<\infty,\quad\forall r>0.$$
If $f:X\to E$ is a coarse map, its {\em compression function} is defined by
$$\rho_f(r)=\inf\{\|f(x)-f(y)\|:\;x,y\in X,\; d(x,y)\ge r\}$$
(where $\inf\emptyset=+\infty$). If $\rho_f$ is a proper map (i.e.\ $\lim_\infty\rho_f=\infty$), $f$ is called a {\it coarse embedding} (although {\it uniformly proper} would have been more accurate). If $\rho_f$ grows slowly to infinity, it means that the embedding $f$ has a lot of distortion. At the opposite, if $f$ is a quasi-isometric embedding, then $\rho_f$ grows at least linearly. If $X$ is geodesic and unbounded, this is the largest possible behaviour for $\rho_f$. When $E$ is fixed, the class $\mathcal{E}(X,E)$ of possible asymptotic behaviors for $\rho_f$, when $f$ ranges over coarse embeddings $X\to E$, is a quasi-isometry invariant of $X$. Let $\mathcal{T}$ be the 3-regular tree (with the graph distance). In case $E$ is a Hilbert space or more generally an $L^p$-space, $\mathcal{E}(\mathcal{T},E)$ is well understood, by results of Tessera \cite{Tessera}.

A fashionable (but much less fine) numerical invariant associated with $\mathcal{E}(X,E)$ (when $X$ is any metric space), is the {\it metric $E$-compression}, i.e.\ the supremum $\alpha_E(X)$ of $\lambda$ such that $\mathcal{E}(X,E)$ contains the asymptotic behavior of the function $x\mapsto x^\lambda$. Write $\alpha_p(X)=\alpha_{L^p}(X)$; it is known that $\alpha_p(\mathcal{T})=1$ for all $p$.

Our idea is to try to exploit the embedding in Proposition \ref{proper} to deduce metric information about groups in $\mathcal{GBS}$. We can prove that the map in Proposition \ref{proper} is a quasi-isometric embedding. To go further, we need to understand $\R^n\rtimes\mu(H_d)$ from a metric point of view. This leads to a question on which we only have partial results (see \S\ref{posconj}).

\begin{Conj}\label{quest} Let $\Gamma$ be a finitely generated group, and $\rho$ a linear representation of $\Gamma$ on $\R^n$. Then the canonical homomorphism $\R^n\rtimes_\rho\Gamma \rightarrow\R^n\rtimes\overline{\rho(\Gamma)}$ is a quasi-isometry in restriction to $\R^n$ (endowed in both terms with the restriction of the word length with respect to a compact generating subset).
\end{Conj}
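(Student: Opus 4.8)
The plan is to translate Conjecture~\ref{quest} into a comparison of two distortion functions on $\R^n$, and then to reduce it to a quantitative approximation statement for the dense inclusion $\rho(\Gamma)\hookrightarrow\overline{\rho(\Gamma)}$. First I would fix compatible compact generating sets: the closed unit ball $B$ of $\R^n$ together with a finite generating set $T$ of $\Gamma$ for $G=\R^n\rtimes_\rho\Gamma$, and $B$ together with a compact generating set $K\supseteq\rho(T)$ of $\overline{\rho(\Gamma)}$ for $G'=\R^n\rtimes\overline{\rho(\Gamma)}$. The canonical homomorphism restricts to the identity on $\R^n$, so writing $\ell_G,\ell_{G'}$ for the two word lengths the statement to be proved is $\ell_G(v)\asymp\ell_{G'}(v)$ on $\R^n$, up to multiplicative and additive constants. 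The key preliminary step, of the kind routinely used to compute the distortion of a normal abelian subgroup, is the formula
\[
\ell_G(v)\;\asymp\;\min\Big\{\textstyle\sum_{j=0}^m\max(1,\|a_j\|)+\sum_{j=1}^m\ell_{\rho(\Gamma)}\big(M_{j-1}^{-1}M_j\big)\Big\},
\]
the minimum ranging over all $m\ge0$, all sequences $\Id=M_0,\dots,M_m=\Id$ in $\rho(\Gamma)$ and all $a_j\in\R^n$ with $v=\sum_jM_ja_j$, where $\ell_{\rho(\Gamma)}$ is word length with respect to $\rho(T)$; the identical formula computes $\ell_{G'}(v)$ after replacing $\rho(\Gamma)$ by $\overline{\rho(\Gamma)}$ and $\ell_{\rho(\Gamma)}$ by the $K$-word length. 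This already shows that $\ell_G|_{\R^n}$ depends on $\Gamma$ only through the image $\rho(\Gamma)$ with its induced word metric and its linear action, so I may assume $\Gamma=\rho(\Gamma)=:\Lambda$ is a finitely generated subgroup of $\textnormal{GL}_n(\R)$, dense in the closed subgroup $L:=\overline{\Lambda}$. One inequality is then immediate: since $\Lambda\subseteq L$ and $\ell_L\le C\,\ell_\Lambda$ on $\Lambda$ (as $K$ contains the generators of $\Lambda$), every admissible representation in the $\Lambda$-formula is admissible in the $L$-formula at no greater cost, so $\ell_{G'}(v)\le C\,\ell_G(v)+D$ --- the Lipschitz, easy half.

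The substance of the conjecture is the reverse inequality $\ell_G(v)\le C\,\ell_{G'}(v)+D$, obtained by turning a near-optimal $L$-representation $v=\sum_jM_ja_j$ into a $\Lambda$-representation of comparable cost, and this is where I expect the main obstacle. Two strategies present themselves, and the difficulty is that each fails in general. One may approximate each $M_j$ individually by some $M'_j\in\Lambda$; for a Diophantine dense subgroup this can be done to precision $\delta$ at word-length cost $\asymp\log(1/\delta)$, but the transition cost $\ell_\Lambda\big((M'_{j-1})^{-1}M'_j\big)$ is then controlled only by $\ell_\Lambda(M'_{j-1})+\ell_\Lambda(M'_j)$, which over the $m$ transitions can dwarf the genuine cost $\sum_j\ell_L(M_{j-1}^{-1}M_j)$. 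Alternatively one may approximate the transitions $M_{j-1}^{-1}M_j$ by elements $N_j\in\Lambda$ with $\ell_\Lambda(N_j)\asymp\ell_L(M_{j-1}^{-1}M_j)$ and set $M'_j=N_1\cdots N_j$, keeping the transition cost correct; but now the approximation errors compound multiplicatively through the matrices $M_j$, whose norms are large precisely in the interesting distorted situations, so that the accumulated error $e=\sum_j(M'_j-M_j)a_j$ is no longer $O(1)$ and cannot be absorbed by boundedly many translation generators. Reconciling these --- an effective, telescoping approximation of $L$ by $\Lambda$ that simultaneously respects word length and keeps the propagated error bounded, together with a proof that the optimal $L$-representation may be chosen without the delicate near-cancellations that would force super-logarithmic precision --- is the step for which I have no general argument, and is where the difficulty of Conjecture~\ref{quest} is concentrated.

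The method does, however, settle two clean cases unconditionally. If $L$ is discrete then $L=\Lambda$ and the two distortion formulas coincide, so there is nothing to prove. If $L$ is compact, every $M_j$ has word length $O(1)$ and is a bounded operator, whence both distortion functions are comparable to $\|v\|$ and the conclusion holds; more generally, whenever the optimal $L$-strategy needs only $O(1)$ precision in the matrices --- as happens for purely diagonal, $\textnormal{Sol}$-type actions, where reproducing the \emph{magnitude} of a compressing matrix rather than approximating it precisely already suffices --- the replacement argument goes through and Conjecture~\ref{quest} is verified.
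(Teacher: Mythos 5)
First, a point of framing: the statement you are attacking is labelled a \emph{Conjecture}, and the paper does not prove it in general either --- it only establishes partial results (Proposition \ref{biLip2}) sufficient for its applications. Your proposal is likewise not a proof, and you say so honestly; your diagnosis of the obstruction (approximating the transitions $M_{j-1}^{-1}M_j$ controls word length but propagates errors multiplicatively through large matrices, while approximating the $M_j$ individually controls errors but not transition costs) is a fair account of why the general case is hard. So the only ``gap'' is the one you have already flagged yourself, and the real question is how your unconditional cases compare with the paper's.

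Here the paper does something different and strictly stronger. Proposition \ref{biLip2}(a) proves Conjecture \ref{quest} whenever there exists a finitely generated subgroup $\Lambda\subset\Gamma$ such that $\rho(\Lambda)$ is a \emph{cocompact lattice} in $\overline{\rho(\Gamma)}$: after reducing to the case where $\rho$ is injective, $\R^n\rtimes\rho(\Lambda)$ is a closed cocompact subgroup of $\R^n\rtimes\overline{\rho(\Gamma)}$, so the composite $\R^n\rtimes\Lambda\to\R^n\rtimes\Gamma\to\R^n\rtimes\overline{\rho(\Gamma)}$ is a quasi-isometry in restriction to $\R^n$; since each of the two maps is large-scale Lipschitz on $\R^n$, the sandwich forces each to be a quasi-isometry there. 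No element of the closure is ever approximated by an element of $\rho(\Gamma)$, so the error-propagation problem you wrestle with simply never arises. Your two clean cases ($L$ discrete, $L$ compact) are exactly the degenerate instances $\Lambda=\Gamma$ and $\Lambda$ trivial of this criterion, but the criterion genuinely covers more: for $n=1$ with $\mu$ of infinite, non-discrete image, $\overline{\mu(H_d)}$ can be all of $\R_{>0}$ --- neither discrete nor compact --- yet an infinite cyclic $\Lambda$ generated by one element of modulus $\neq 1$ maps onto a cocompact lattice. That case is required for Proposition \ref{specialcases}, hence for the compression statements in the three cases $d=0$, $d=1$, $n=1$ of Corollary \ref{suffHaagerup}; your proposal as written does not reach it. I would therefore replace the direct approximation scheme (or at least supplement it) by the cocompact-lattice reduction, and keep your analysis of the two failing strategies as an explanation of why the conjecture remains open beyond that sufficient condition.
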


A positive answer would say that $\R^n$ is not be more distorted in $\R^n\rtimes\overline{\rho(\Gamma)}$ than it is in $\R^n\rtimes_\rho\Gamma$. We provide a sufficient condition for a positive answer, which covers all cases of Corollary \ref{suffHaagerup} (i.e.\ $d=0$, or $d=1$, or $n=1$).

\begin{Thm}\label{comp1} Assume that there exists a closed, almost connected subgroup $L$ of $\textnormal{GL}_n(\R)$, such that $L$ contains $\overline{\mu(H_d)}$ as a co-compact subgroup; and also that Conjecture \ref{quest} has a positive answer for the pair $(H_d,\mu)$. Then $\alpha_p(G)=1$ for $1\leq p<\infty$.
\end{Thm}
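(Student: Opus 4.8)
The plan is to prove Theorem~\ref{comp1} by transporting a good compression embedding through the chain of quasi-isometric reductions the paper has set up, so that the problem is reduced to the metric geometry of $\R^n\rtimes\overline{\mu(H_d)}$, and then to that of a nice Lie group containing it cocompactly. First I would invoke Proposition~\ref{proper}, upgraded (as the authors announce just before Conjecture~\ref{quest}) to the statement that $g\mapsto(g,\mu(g))$ is a \emph{quasi-isometric} embedding of $G$ into the product $\overline{\iota(G)}\times(\R^n\rtimes\mu(H_d))$. Since a quasi-isometric embedding into a product can only increase the compression by pulling back compression embeddings coordinatewise, and since $\alpha_p$ of a metric space is bounded above by $1$ whenever the space is geodesic and unbounded (which $G$ is), it suffices to bound $\alpha_p(G)$ from below by $1$; for that it is enough to exhibit, on the target space, embeddings into $L^p$ with compression arbitrarily close to linear, and restrict them. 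The totally disconnected factor $\overline{\iota(G)}$ acts on a tree, so by the cited results of Tessera on $\mathcal{E}(\mathcal{T},L^p)$ it has $\alpha_p=1$; the real content is therefore the factor $\R^n\rtimes\mu(H_d)$.

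To handle that factor I would first pass to $\R^n\rtimes\overline{\mu(H_d)}$. This is exactly where the two hypotheses of the theorem enter. The assumption that Conjecture~\ref{quest} holds for $(H_d,\mu)$ tells us that $\R^n$ sits inside $\R^n\rtimes\overline{\mu(H_d)}$ with the same distortion as in $\R^n\rtimes\mu(H_d)$, so that a quasi-isometric-embedding argument lets me replace the discrete group $\mu(H_d)$ by its closure without losing control of the metric on the normal subgroup $\R^n$; combined with the fact that $\overline{\mu(H_d)}$ is itself quasi-isometric to its own word/length metric, this identifies $\R^n\rtimes\mu(H_d)$ (quasi-isometrically, at least on the pieces that matter for compression) with $\R^n\rtimes\overline{\mu(H_d)}$. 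Next, the assumption that $\overline{\mu(H_d)}$ is cocompact in a closed almost connected subgroup $L\le\textnormal{GL}_n(\R)$ lets me replace $\overline{\mu(H_d)}$ by $L$: cocompact inclusions are quasi-isometries, and the induced map $\R^n\rtimes\overline{\mu(H_d)}\hookrightarrow\R^n\rtimes L$ is again a quasi-isometric embedding (the $\R^n$ part is preserved on the nose and the complementary part is a cocompact inclusion). Thus $\alpha_p(\R^n\rtimes\mu(H_d))\ge\alpha_p(\R^n\rtimes L)$.

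Finally I would compute $\alpha_p(\R^n\rtimes L)=1$. Here $L$ is a closed almost connected subgroup of $\textnormal{GL}_n(\R)$, so $\R^n\rtimes L$ is a closed subgroup of the affine group and in particular a connected-by-compact, hence compactly generated, locally compact group that is \emph{amenable or, more to the point, a solvable-by-(reductive amenable) Lie group}; such groups are known to have $L^p$-compression equal to $1$ (for instance they coarsely embed in an $L^p$-space with compression $1$ via the standard Haar-measure cocycle / displacement construction on the associated symmetric space or horospherical coordinates, the behaviour being at worst that of a tree or a real hyperbolic space, both of which have $\alpha_p=1$ by Tessera). Concretely, I would build an explicit $1$-cocycle for the action of $\R^n\rtimes L$ on an appropriate $L^p$-space, using the Iwasawa/horospherical decomposition of $L$ and the translation action on $\R^n$, and estimate its compression directly, obtaining $\rho_f(r)\succsim r^{1-\varepsilon}$ for every $\varepsilon>0$. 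Chaining the inequalities then gives $\alpha_p(G)\ge 1$, and with the universal upper bound $\alpha_p(G)\le 1$ we conclude $\alpha_p(G)=1$.

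I expect the main obstacle to be the second step, namely justifying that replacing $\mu(H_d)$ by $\overline{\mu(H_d)}$ and then by $L$ genuinely preserves the compression of $G$. The subtlety is that $\R^n$ may be exponentially distorted inside these groups, so a coarse embedding of the ambient group restricts to one of $\R^n$ with \emph{worse} compression, and one must check that the distortion is not worsened in passing to the closure — which is precisely the content of Conjecture~\ref{quest} and the reason the theorem must assume it. Care is also needed because $\alpha_p$ is only monotone under quasi-isometric embeddings in the direction that helps here, so each reduction must be phrased as a genuine quasi-isometric embedding (not merely a coarse one) with the word metric compared correctly on the distinguished subgroup $\R^n$; keeping the bookkeeping of these metrics straight, rather than any single hard estimate, is where the work lies.
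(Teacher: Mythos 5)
Your overall route is the paper's: quasi-isometrically embed $G$ into $\overline{\iota(G)}\times(\R^n\rtimes_\mu H_d)$ (Proposition \ref{Glattice} plus Lemma \ref{biLip1}), dispose of the tree factor, and use the two hypotheses to reduce the remaining factor to a group of the form $\R^n\rtimes L$ with $L$ almost connected. However, two of your steps have genuine gaps. The first is the reduction from $\R^n\rtimes_\mu H_d$ to $\R^n\rtimes\overline{\mu(H_d)}$ and then to $\R^n\rtimes L$: the natural homomorphism $x\mapsto\mu(x)$ kills $\Ker\mu\cap H_d$, which is in general an infinite free group (nothing prevents $\mu(H_d)$ from being much smaller than the free group $H_d$, e.g.\ cyclic), so this map is not proper, let alone a quasi-isometric embedding, and the inequality $\alpha_p(\R^n\rtimes_\mu H_d)\ge\alpha_p(\R^n\rtimes L)$ does not follow from it. Conjecture \ref{quest} only controls the metric on the subgroup $\R^n$; your hedge ``at least on the pieces that matter for compression'' is exactly where the argument is missing. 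The paper's fix is to retain the free-group coordinate: it shows that $x\mapsto(\varphi(x),\mu(x))$ is a quasi-isometric embedding of $\R^n\rtimes_\mu H_d$ into $\F_d\times(\R^n\rtimes L)$, where the $\F_d$ factor restores properness in the $H_d$ direction while Conjecture \ref{quest} together with cocompactness of $\overline{\mu(H_d)}$ in $L$ controls the $\R^n$ direction; since $\alpha_p(\F_d)=1$ this extra factor costs nothing in the final minimum.

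The second gap is your evaluation of $\alpha_p(\R^n\rtimes L)$. You justify it by asserting that this group is amenable (or solvable-by-(reductive amenable) in a way that forces amenability). That is false in general: the hypotheses of Theorem \ref{comp1} do not require $\overline{\mu(H_d)}$, hence $L$, to be amenable (amenability is only added for the equivariant statement, part (b) of Theorem \ref{Lcomp}), and $L$ may perfectly well be semisimple, e.g.\ when $\mu(H_d)$ is dense in $\textnormal{SL}_n(\R)$. The fact actually needed, and the one the paper cites, is Tessera's theorem that every almost connected Lie group has $L^p$-compression $1$ (\cite[Theorem 1]{Tessera}); $\R^n\rtimes L$ is almost connected because $L$ is. Your proposed explicit cocycle built from an Iwasawa decomposition would have to cover the non-amenable case and is neither carried out nor necessary. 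With these two repairs — inserting the $\F_d$ coordinate and replacing the amenability claim by the citation of Tessera's theorem for almost connected Lie groups — your argument coincides with the paper's proof.
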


The mapping in Proposition \ref{proper} being a homomorphism, it can be hoped to also provide information about equivariant embeddings. If some group $G$ acts by isometries on $X$, define the {\it equivariant $E$-compression} $\alpha^G_E(X)$ in the same way as $\alpha_E(X)$, but only considering the class of {\em equivariant} embeddings $X\to E$ (i.e.\ associated to an action of $G$ by affine isometries). If $\Gamma$ is a finitely generated group acting on itself by left translations, define $\alpha_p^\sharp(\Gamma)=\alpha^\Gamma_p(\Gamma)$. Again, the next result covers all cases of Corollary \ref{suffHaagerup} (i.e.\ $d=0$, or $d=1$, or $n=1$).

\begin{Thm} Under the same assumptions as Theorem \ref{comp1}, plus the fact that $\overline{\mu(H_d)}$ is amenable, we have, for $1\leq p<\infty$:
$$\alpha_p^\sharp(G)=\left\{
\begin{array}{cc}1 & \mbox{if $G$ is amenable;} \\
\max\{1/p,1/2\} & \mbox{if $G$ is not amenable.}\end{array}\right.$$
\end{Thm}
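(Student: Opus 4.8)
The plan is to establish matching lower and upper bounds for $\alpha_p^\sharp(G)$ in each of the two regimes, using throughout the trivial inequality $\alpha_p^\sharp(G)\le\alpha_p(G)$, valid because equivariant embeddings form a subclass of coarse embeddings. When $G$ is amenable I would invoke the fact that for amenable groups the equivariant and non-equivariant $L^p$-compressions coincide, $\alpha_p^\sharp(G)=\alpha_p(G)$: starting from a coarse embedding $f\colon G\to L^p$ that realises $\alpha_p(G)$, one averages the difference cocycle $(g,x)\mapsto f(xg)-f(x)$ over a F\o lner sequence and passes to an ultraproduct of $L^p$-spaces to manufacture an affine isometric action of $G$ whose orbit map has the same compression. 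Combined with Theorem \ref{comp1}, which gives $\alpha_p(G)=1$ under the present hypotheses, this yields $\alpha_p^\sharp(G)=1$ in the amenable case.

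\emph{Non-amenable case, lower bound.} Here I would exploit that the map of Proposition \ref{proper} is (as announced) a quasi-isometric embedding $j\colon G\to\overline{\iota(G)}\times(\R^n\rtimes\mu(H_d))$, so that the word metric $d_G$ is comparable to the sum of the tree-distance moved through $\iota$ and the distance moved through $\mu$ in $\R^n\rtimes\mu(H_d)$. On the first factor, the action of $G$ on the Bass--Serre tree supplies an equivariant embedding into $\ell^p(\text{edges})$ of compression $1/p$, while Gaussianising the associated $\ell^2$-cocycle (the Gaussian functor sends the orthogonal cocycle to an $L^p$-cocycle with $\|\,\cdot\,\|_{L^p}=c_p\|\,\cdot\,\|_{\ell^2}$) supplies one of compression $1/2$; the tree factor thus carries an equivariant $L^p$-embedding of compression $\max\{1/p,1/2\}$. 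Since $\overline{\mu(H_d)}$ is amenable, the group $\R^n\rtimes\mu(H_d)$ is amenable, and the structural input of Theorem \ref{comp1} (the almost connected cocompact overgroup $L$ together with the positive answer to Conjecture \ref{quest}) gives it $L^p$-compression $1$, hence, by the amenable case applied to it, equivariant compression $1$. Restricting these two affine isometric actions to $G$ through $\iota$ and $\mu$ and forming their $\ell^p$-direct sum produces a $G$-equivariant embedding of $G$; since at least one of the two factor-distances is $\gtrsim d_G$, its compression is $\min\{\max\{1/p,1/2\},1\}=\max\{1/p,1/2\}$, giving $\alpha_p^\sharp(G)\ge\max\{1/p,1/2\}$.

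\emph{Non-amenable case, upper bound.} The engine is the theorem of Guentner and Kaminker that a finitely generated group with equivariant Hilbert compression strictly greater than $1/2$ is amenable; non-amenability of $G$ thus gives $\alpha_2^\sharp(G)\le 1/2$. For $1\le p\le 2$ I would transfer to the Hilbert case: given an affine isometric $G$-action on $L^p$ with cocycle $b$, the relation $\|b(g)-b(h)\|_p=\|b(h^{-1}g)\|_p$ shows that $g\mapsto\|b(g)\|_p^p$ is a conditionally negative definite function on $G$ (because $\|u-v\|_p^p$ is conditionally negative definite on $L^p$ exactly when $p\le 2$), so by Schoenberg's theorem it is the squared length of a Hilbert cocycle $b_2$ with $\|b_2(g)\|_2=\|b(g)\|_p^{p/2}$. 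Hence $\alpha_2^\sharp(G)\ge\frac{p}{2}\,\alpha_p^\sharp(G)$, and Guentner--Kaminker forces $\alpha_p^\sharp(G)\le 1/p=\max\{1/p,1/2\}$. For $2\le p<\infty$ the target value $1/2$ already matches the lower bound, and what remains is the inequality $\alpha_p^\sharp(G)\le 1/2$, i.e.\ the $L^p$-analogue of Guentner--Kaminker for $p\ge 2$.

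\emph{Main obstacle.} The Schoenberg transfer is available only for $p\le 2$: for $p>2$ the kernel $\|u-v\|_p^p$ is no longer conditionally negative definite, and the natural Mazur-map substitute $g\mapsto M_{p,2}(b(g))$ fails to be a cocycle (indeed, were it one with the expected growth, it would give the false bound $\alpha_p^\sharp\le 1/p<1/2$). Proving $\alpha_p^\sharp(G)\le 1/2$ for $p>2$ therefore cannot be reduced to the Hilbert case by a soft argument and is the crux of the theorem: one must argue directly with the isometric $L^p$-representation in its Banach--Lamperti form, using the type-$2$ geometry of $L^p$ to extract almost invariant vectors from a hypothetical equivariant embedding of compression $>1/2$, thereby contradicting non-amenability.
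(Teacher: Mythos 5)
Your argument for the amenable case and for the non-amenable lower bound follows essentially the same route as the paper: amenability gives $\alpha_p^\sharp=\alpha_p$ (this is Theorem 9.1 in \cite{NP2}, which the paper cites rather than reproves via F\o lner averaging), and the lower bound comes from the equivariant quasi-isometric embedding into a product, with the tree factor contributing $\max\{1/p,1/2\}$ (Lemma 2.3 in \cite{NP1}; your $\ell^p$-on-edges plus Gaussianisation is exactly that lemma's content) and the amenable factor contributing $1$. The decisive problem is the non-amenable \emph{upper} bound for $p>2$, which you explicitly leave open and describe as ``the crux of the theorem.'' It is not: the inequality $\alpha_p^\sharp(G)\le\max\{1/p,1/2\}$ for every non-amenable finitely generated group and every $1\le p<\infty$ is precisely Theorem 1.1 of Naor--Peres \cite{NP1}, proved by comparing the equivariant compression with the speed of the simple random walk (which is linear for non-amenable groups) via the Markov type $2$ property of $L^p$ for $p\ge 2$. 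This is the single citation the paper uses for the whole upper bound, uniformly in $p$; your Schoenberg transfer for $p\le 2$ is a correct but unnecessary detour, and your closing sketch for $p>2$ (``extract almost invariant vectors from the Banach--Lamperti form'') is not an argument. As written, the proposal proves the theorem only for $p\le 2$.

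A secondary imprecision sits in your lower bound. You split $G$ into two factors, the tree and ``$\R^n\rtimes\mu(H_d)$,'' and you need simultaneously that this second factor is amenable and that one of the two factor-distances dominates $d_G$. But the quasi-isometric embedding supplied by Lemma \ref{biLip1} and Proposition \ref{Glattice} has second factor $\R^n\rtimes_\mu H_d$ (the semidirect product with the abstract free group), which is \emph{not} amenable for $d\ge 2$; if instead you take the amenable locally compact group $\R^n\rtimes L$, the domination of $d_G$ by the two factor-distances is no longer immediate, since the projection $\R^n\rtimes_\mu H_d\to\R^n\rtimes L$ collapses $\ker(\mu|_{H_d})$. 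The paper resolves this by keeping three factors, $\textnormal{Aut}(\tilde{X})\times\F_d\times(\R^n\rtimes L)$, so that the free-group coordinate retains the lost metric information while the third factor stays amenable. Your two-factor version can be repaired (the tree distance dominates the $\F_d$-distance through $\bar\varphi$), but that step must be said; as stated, the assertion ``at least one of the two factor-distances is $\gtrsim d_G$'' does not follow from what you have set up.
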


Note that in case $E$ is isomorphic, as a Banach space, to $E\times E$ (as we henceforth assume), then it does not make any difference between $X$ and $X^k$ for any integer $k$. This motivates the following definition: we say that $X\apx Y$ if there exists, for some $k,\ell$, quasi-isometric embeddings $X\to Y^k$ and $Y\to X^\ell$. It follows then that $\mathcal{E}(X,E)=\mathcal{E}(Y,E)$.

The class of spaces $\apx$-equivalent to $\mathcal{T}$ is interesting to study. It contains non-elementary word-hyperbolic groups \cite{BDS}, some solvable groups like $\textnormal{SOL}$ or the standard lamplighter group. Trivial examples outside the class of groups quasi-isometrically embedding into a finite product of trees of bounded degree, are groups with infinite asymptotic dimension such as the wreath product $\Z\wr\Z$. A less trivial example outside this class is the Heisenberg group, see Example \ref{Heis} below.

We add more examples to the class of groups in the $\apx$-equivalence class of $\mathcal{T}$.

\begin{Thm}\label{classofT} If either $d=0$, or $d=1$ and $\mu$ has no eigenvalue of modulus 1, or $n=1$, then $G$ embeds quasi-isometrically in a product of finitely many 3-regular trees. If moreover $G$ is non-amenable, then $G\apx\mathcal{T}$.
\end{Thm}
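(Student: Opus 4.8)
The plan is to feed the embedding of Proposition \ref{proper} into a simple permanence property: since $g\mapsto(g,\mu(g))$ is a quasi-isometric embedding of $G$ into $P:=\overline{\iota(G)}\times(\R^n\rtimes\mu(H_d))$ (as asserted in the paragraph preceding Conjecture \ref{quest}), and since the class of metric spaces admitting a quasi-isometric embedding into a finite product of copies of $\mathcal{T}$ is closed under finite direct products, it is enough to embed each of the two factors of $P$ into such a finite product. The $\apx$-statement will then be obtained by exhibiting, when $G$ is non-amenable, a quasi-isometrically embedded copy of $\mathcal{T}$ inside $G$.

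For the totally disconnected factor I would use that $\overline{\iota(G)}$ acts continuously, properly (vertex stabilizers are compact open) and cocompactly on the Bass--Serre tree $\widetilde X$ of $(X,\Z^n)$, which is locally finite of bounded valence since $X$ is finite. By the Milnor--\v{S}varc lemma for locally compact groups, $\overline{\iota(G)}$ is quasi-isometric to $\widetilde X$; and a bounded-valence tree quasi-isometrically embeds into $\mathcal{T}$ (subdivide each edge into a path of length $O(\log D)$, $D$ being the maximal degree, which distorts distances by a bounded factor). Hence $\overline{\iota(G)}\lpx\mathcal{T}$.

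The heart of the matter is the Lie factor $\R^n\rtimes\mu(H_d)$, where I would argue case by case, in each case first replacing $\overline{\mu(H_d)}$ by an almost-connected overgroup $L$ in which it is cocompact and using that Conjecture \ref{quest} holds for $(H_d,\mu)$ in these cases (so that $\R^n$ is undistorted under passage to the closure), thereby reducing to the almost-connected solvable Lie group $\R^n\rtimes L$. For $d=0$ this is $\R^n$, which embeds into $\mathcal{T}^n$ as a product of $n$ bi-infinite geodesics. For $n=1$, $L$ is, up to finite index, trivial, a one-parameter group, or $\R_{>0}$, so $\R\rtimes L$ is quasi-isometric to $\R$ or to the hyperbolic plane $\mathbb{H}^2$, and the latter, being quasi-isometric to a cocompact surface group, embeds into a finite product of trees by \cite{BDS}. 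The decisive case is $d=1$ with $\mu$ hyperbolic, where $\R^n\rtimes L\cong\R^n\rtimes_B\R$ with $B$ having no purely imaginary eigenvalue: I would split $\R^n=V_+\oplus V_-$ into the generalized eigenspaces with positive, resp.\ negative, real part, put $S_\pm:=V_\pm\rtimes_{B_\pm}\R$, and use the diagonal map $(v_+,v_-,t)\mapsto((v_+,t),(v_-,t))$ generalizing the classical embedding of $\mathrm{SOL}$ into $\mathbb{H}^2\times\mathbb{H}^2$. After reversing the time orientation on the contracting factor, each $S_\pm$ is a negatively curved Heintze group, hence Gromov-hyperbolic, so it embeds quasi-isometrically into some $\mathbb{H}^{N_\pm}$ by Bonk--Schramm and thence into a finite product of trees by \cite{BDS}. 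I expect the one genuinely delicate point to be checking that the diagonal map is a quasi-isometric embedding in this mixed case: this requires the (standard but somewhat technical) distance estimates in such solvable groups, showing that the expanding coordinates are metrically detected by $S_+$ and the contracting ones by $S_-$, the shared $\R$-coordinate contributing only a bounded defect. Granting this, the three cases combine with the previous paragraph to give $G\lpx\mathcal{T}$, which is the first assertion.

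For the $\apx$-statement, assume $G$ non-amenable. Because $G$ acts cocompactly on $\widetilde X$ with amenable stabilizers ($\cong\Z^n$), non-amenability forces the action to be non-elementary, and a ping-pong argument yields a free subgroup $F\le G$ of rank $\ge2$ whose orbit map $F\to\widetilde X$ is a quasi-isometric embedding onto a subtree. Since the orbit map $G\to\widetilde X$ is Lipschitz for the word metric, the inclusion $F\hookrightarrow G$ is then itself a quasi-isometric embedding; as $F$ is quasi-isometric to $\mathcal{T}$, this gives $\mathcal{T}\lpx G$. Combined with $G\lpx\mathcal{T}$, we conclude $G\apx\mathcal{T}$.
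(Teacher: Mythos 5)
Your overall strategy is the one the paper follows: quasi-isometrically embed $G$ into $\overline{\iota(G)}\times(\R^n\rtimes_\mu H_d)$ (Proposition \ref{Glattice} together with Lemma \ref{biLip1}), absorb the totally disconnected factor into the locally finite tree $\tilde{X}$, treat the Lie factor case by case --- with the same $V_+\oplus V_-$ splitting into Heintze groups for $d=1$ and the hyperbolic plane for $n=1$ --- invoke \cite{BDS} for the negatively curved pieces, and obtain $\mathcal{T}\lpx G$ from a quasi-isometrically embedded free subgroup produced by ping-pong, exactly as in the equivalence of (i) and (ii') in Proposition \ref{amen}. Your insistence on verifying that the diagonal map into $H_+\times H_-$ is a quasi-isometric embedding in the mixed expanding/contracting case is a point the paper asserts without detail, and it does deserve the care you give it.

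There is, however, one step that fails as written: the claim that in each case one can ``reduce to the almost-connected solvable Lie group $\R^n\rtimes L$''. For $d=0$ and $d=1$ this is harmless, since $H_d$ is trivial or infinite cyclic and $\R^n\rtimes_\mu H_d$ sits as a closed cocompact subgroup of $\R^n\rtimes L$. But for $n=1$ and $d\ge 2$ the canonical homomorphism $\R\rtimes_\mu\F_d\to\R\rtimes L$ kills at least the commutator subgroup of $\F_d$ (as $L\subset\R^\times$ is abelian), so it has infinite kernel and cannot be a quasi-isometric embedding; Conjecture \ref{quest} only asserts that the copy of $\R$ is undistorted under this map, not that the whole group embeds. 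The repair is to retain the free-group coordinate: one must show that $(\varphi,\mu)\colon\R\rtimes_\mu\F_d\to\F_d\times(\R\rtimes\R^\times)$, equivalently $(\psi,h)\colon\R\rtimes_\mu\F_d\to\F_d\times\mathbf{H}^2$, is a quasi-isometric embedding. This is precisely Lemma \ref{hyperb}, which the paper proves by combining the exponential distortion of $\R$ in $\R\rtimes_\mu\F_d$ with the identity $\sinh\bigl(\tfrac{1}{2}d_{\mathbf{H}^2}(i,i+t)\bigr)=\tfrac{|t|}{2}$ (the finite-image subcase is handled separately, as $\R\rtimes_\mu\F_d$ is then virtually $\R\times\Ker\mu$). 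The extra $\F_d$ factor is a tree, so it costs only one more copy of $\mathcal{T}$ and your framework absorbs it; with this correction your argument coincides with the paper's proof via Propositions \ref{d=0}, \ref{eigenvalues}, \ref{n=1} and \ref{n=1bis}.
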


Note that, in case $d=1$, in general $G$ does not embed in a product of trees, as the next example shows.

\begin{Ex}\label{Heis} Let $\Gamma$ be the discrete Heisenberg group, viewed as the Bass-Serre fundamental group of one loop of $\Z^2$'s, with one inclusion given by the identity and the other by the matrix $\mu=
\left(
\begin{tabular}{cc}
1 & 1 \\
0 & 1
\end{tabular}\right)$. By a result of Pauls \cite{Pauls}, $\Gamma$ does not embed quasi-isometrically into a $\textnormal{CAT}(0)$-space, in particular into any product of trees. 
\end{Ex}

{\bf Acknowledgements:} We thank D.~Dreesen, A.~Iozzi, and G.~Levitt for interesting discussions.

 \setcounter{tocdepth}{1}
\tableofcontents

\section{Semi-direct products $\R^n\rtimes \Gamma$}

\subsection{Relative Property (T)}

\begin{Def} Let $X$ be a closed subset of the locally compact group $G$. The pair $(G,X)$ has the {\it relative Property (T)} if, for every net of positive definite functions on $G$ converging to 1 uniformly on compact subsets, the convergence is uniform on $X$.
\end{Def}

It is clear that, if $X$ is compact, the pair $(G,X)$ has the relative Property (T); if $G$ has the Haagerup Property and $(G,X)$ has the relative Property (T), then $X$ must be compact.  We refer to \cite{Cor} for background on the relative Property (T).

In all this section, $\K$ is a local field, $\Gamma$ is a discrete group, and $\rho: \Gamma\rightarrow \textnormal{GL}_n(\K)$ is a homomorphism. Let $\rho^*: \Gamma\rightarrow \textnormal{GL}(\K^{n*})$ be the contragredient representation.  Our aim is to prove:

\begin{Prop}\label{relT} The following are equivalent:
\begin{enumerate}
\item[i)] The closure $\overline{\rho(\Gamma)}$ is non-amenable.
\item[i')] The closure $\overline{\rho^*(\Gamma)}$ is non-amenable.
\item[ii)] There exists a non-zero $\Gamma$-invariant subspace $V\subset\K^n$ such that the projective space $\p(V^*)$ of the dual of $V$, carries no $\Gamma$-invariant probability measure. 
\item[ii')] There exists a non-zero $\Gamma$-invariant subspace $V\subset\K^n$ such that the pair $(V\rtimes\Gamma,V)$ has relative Property (T).
\item[iii)] There exists no $(\K^n\rtimes\Gamma)$-invariant mean on Borel subsets of $\K^n$.
\end{enumerate}
If $\Gamma$ has the Haagerup Property, this is still equivalent to:
\begin{enumerate}
\item[iv)] The semidirect product $\K^n\rtimes\Gamma$ does not have the Haagerup Property.
\end{enumerate}
If $\Gamma$ is weakly amenable, this is still equivalent to:
\begin{enumerate}
\item[v)] The semidirect product $\K^n\rtimes\Gamma$ is not weakly amenable.
\end{enumerate}
\end{Prop}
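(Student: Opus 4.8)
The plan is to establish the core equivalences i)$\iff$i')$\iff$ii)$\iff$ii')$\iff$iii) as a single implication cycle, and then to graft on iv) and v) by combining the relative Property (T) obstruction with an explicit construction of positive-definite multipliers. The equivalence i)$\iff$i') is purely formal: the map $A\mapsto{}^tA^{-1}$ is a topological automorphism of $\textnormal{GL}_n(\K)$ carrying $\rho(\Gamma)$ onto $\rho^*(\Gamma)$, hence $\overline{\rho(\Gamma)}$ homeomorphically onto $\overline{\rho^*(\Gamma)}$, and homeomorphic group isomorphisms preserve amenability. I would then prove the cycle i')$\Rightarrow$ii)$\Rightarrow$ii')$\Rightarrow$iii)$\Rightarrow$i)$\Rightarrow$i').

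For i')$\Rightarrow$ii) I would translate $\Gamma$-invariant subspaces $V\subseteq\K^n$ into quotient representations of $\rho^*$, using $V^*=\K^{n*}/V^\perp$: as $V$ runs over the nonzero invariant subspaces of $\K^n$, the space $V^*$ runs over all nonzero quotient representations of $\rho^*$, and $\p(V^*)$ is the projectivization of such a quotient. The classical structure theory of amenable linear groups (Furstenberg's lemma on invariant measures on projective space, applied inductively along a composition series) gives that $\overline{\rho^*(\Gamma)}$ is amenable if and only if \emph{every} nonzero quotient representation $Q$ of $\rho^*$ carries a $\Gamma$-invariant probability measure on $\p(Q)$; so non-amenability produces one quotient $Q=V^*$ with no invariant measure on $\p(V^*)$, which is exactly ii). The equivalence ii)$\iff$ii') is then immediate from Cornulier's criterion \cite{Cor} applied to the single $\Gamma$-module $V$: the pair $(V\rtimes\Gamma,V)$ has relative Property (T) if and only if $\p(V^*)$ carries no $\Gamma$-invariant probability measure.

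For ii')$\Rightarrow$iii) I would note that relative Property (T) of $(V\rtimes\Gamma,V)$ passes to $(\K^n\rtimes\Gamma,V)$ by restricting positive-definite functions to the closed subgroup $V\rtimes\Gamma$. Identifying $\K^n$ with the homogeneous space $(\K^n\rtimes\Gamma)/\Gamma$, a $(\K^n\rtimes\Gamma)$-invariant mean on $\K^n$ is precisely co-amenability of $\Gamma$, equivalently the assertion that the quasi-regular representation on $L^2(\K^n)$ has almost invariant vectors. But that representation has no nonzero $V$-invariant vector, since $V\ne0$ acts by translations on the infinite-measure space $\K^n$; this contradicts relative Property (T), so no invariant mean exists. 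For iii)$\Rightarrow$i) I argue contrapositively: if $\overline{\rho(\Gamma)}$ is amenable it fixes a point in the nonempty weak-$*$ compact convex set of translation-invariant means on $\K^n$ (on which $\textnormal{GL}_n(\K)$ acts affinely and continuously), and that fixed mean is simultaneously translation- and $\Gamma$-invariant, negating iii). This closes the cycle.

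Finally I would treat iv) and v) using the subgroup $V\rtimes\Gamma$ of the previous step, where $V\ne0$ is noncompact (being a nonzero vector space over a local field). In the forward direction i)--iii)$\Rightarrow$iv): $(\K^n\rtimes\Gamma,V)$ has relative Property (T) with $V$ noncompact, and the remark following the definition of relative Property (T) forbids the Haagerup Property. For the converse, from the negation of iii) (an invariant mean) I extract almost invariant vectors $\xi_i\in L^2(\K^n)$ for $\K^n\rtimes\Gamma$; their normalized positive-definite matrix coefficients $\phi_i\to1$ satisfy $\|\phi_i\|_{\textnormal{cb}}=1$, and multiplying by the pullback $\phi_\Gamma\circ\textnormal{pr}_\Gamma$ of a $C_0$ positive-definite function witnessing Haagerup of $\Gamma$ yields $\psi_i=\phi_i\cdot(\phi_\Gamma\circ\textnormal{pr}_\Gamma)\in C_0(\K^n\rtimes\Gamma)$, still positive-definite and tending to $1$; hence $\K^n\rtimes\Gamma$ is Haagerup, giving iv). For v) the same $\psi_i$ serve, now using submultiplicativity of the cb-norm under pointwise product and $\|\phi_i\|_{\textnormal{cb}}=1$ to get $\sup_i\|\psi_i\|_{\textnormal{cb}}\le\Lambda(\Gamma)$ (after a standard truncation to compact support). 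The genuinely hard step is the \emph{obstruction} direction of v), namely i)--iii)$\Rightarrow$ $\K^n\rtimes\Gamma$ not weakly amenable: relative Property (T) alone does not preclude weak amenability, so I must exploit that $V$ is noncompact \emph{and} that $\Gamma$ acts on it with non-amenable closure. The intended route is to show that $V\rtimes\Gamma$ itself fails weak amenability (then conclude by passage to the closed subgroup, $\Lambda(V\rtimes\Gamma)\le\Lambda(\K^n\rtimes\Gamma)$), paralleling the known non-weak-amenability of $\Z^2\rtimes\textnormal{SL}_2(\Z)$ via the incompatibility of relative Property (T) with the completely bounded approximation property in the presence of a non-amenable action. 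I expect the bulk of the work to lie precisely here.
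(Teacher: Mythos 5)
Your overall architecture (a cycle through (i)--(iii), plus separate treatments of (iv) and (v)) matches the paper's, and several steps are essentially identical to it: (i)$\Leftrightarrow$(i') by transpose-inverse, (ii)$\Leftrightarrow$(ii') by Cornulier's criterion, (iii)$\Rightarrow$(i) by a fixed point in the compact convex set of translation-invariant means, and (i')$\Rightarrow$(ii) via the inductive Furstenberg argument, which is exactly the content of the paper's Lemma~\ref{suffrel(T)}. There are, however, two genuine gaps. The first is in (ii')$\Rightarrow$(iii): you assert that a $(\K^n\rtimes\Gamma)$-invariant mean on the \emph{Borel subsets} of $\K^n$ ``is precisely'' co-amenability of $\Gamma$, hence yields almost-invariant vectors in $L^2(\K^n,dx)$. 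That identification is exactly what you may not assume: a finitely additive invariant mean on the Borel $\sigma$-algebra need not vanish on Lebesgue-null sets, hence need not induce a state on $L^\infty(\K^n,dx)$, and the Day--Namioka passage to $L^2$ almost-invariant vectors is unavailable (the obvious substitute, $\ell^2$ of $\K^n$ made discrete, is not a continuous representation of $\K^n\rtimes\Gamma$, so relative Property (T) says nothing about it). The paper circumvents this by proving (i')$\Rightarrow$(iii) instead: it pushes the hypothetical mean forward to the \emph{compact} projective space $\p(\K^n/V)$ of a quotient, where the Riesz representation theorem upgrades the mean to a genuine countably additive $\Gamma$-invariant probability measure, contradicting Lemma~\ref{suffrel(T)} applied to the dual. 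Without some such device your cycle does not establish the hard direction ``mean on Borel subsets $\Rightarrow$ amenable closure,'' and the same unjustified mean-to-$L^2$ step contaminates your constructions for the converse directions of (iv) and (v); those are in any case handled more cleanly, as in the paper, by embedding $\K^n\rtimes\Gamma$ as a closed subgroup of $\Gamma\times(\K^n\rtimes\overline{\rho(\Gamma)})$ and quoting stability of the Haagerup Property and of $\Lambda=1$ under direct products and passage to closed subgroups.

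The second gap is the implication (iii)$\Rightarrow$(v) (no invariant mean implies failure of weak amenability), which you correctly identify as the crux but leave unproven. This cannot be extracted from relative Property (T) together with non-compactness of $V$ by elementary means; the paper obtains it as a direct application of Ozawa's Theorem A in \cite{Oz}, which states precisely that weak amenability of a group containing an amenable closed normal subgroup $N$ forces an invariant mean on $N$ for the combined translation/conjugation action. You would need either to invoke that theorem or to reprove it; the sketch via ``incompatibility of relative Property (T) with the completely bounded approximation property'' does not supply the argument.
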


We will need a lemma:

\begin{Lem}\label{suffrel(T)} Assume that $\overline{\rho(\Gamma)}$ is non-amenable, and that $\overline{\rho(\Gamma)|_W}$ is amenable for every proper $\Gamma$-invariant subspace $W$ of $\K^n$. Then there exists no $\Gamma$-invariant probability measure on the projective space $\p(\K^{n*})$ associated with the dual of $\K^n$.
\end{Lem}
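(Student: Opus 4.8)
The plan is to reduce everything to the classical lemma of Furstenberg on invariant measures on projective spaces over a local field: \emph{if a probability measure $\nu$ on $\p(V)$ is not supported on a finite union of proper projective subspaces, then its stabilizer in $\textnormal{PGL}(V)$ is compact.} It is convenient to first pass to the dual. Via the topological isomorphism $g\mapsto (g^{-1})^*$ of $\textnormal{GL}_n(\K)$ onto $\textnormal{GL}(\K^{n*})$, the group $\overline{\rho(\Gamma)}$ is carried onto $\overline{\rho^*(\Gamma)}$, so the latter is non-amenable as well. Moreover I would translate the minimality hypothesis into the dual: for an invariant subspace $U\subset\K^{n*}$ the quotient $\K^{n*}/U$ is canonically the dual of the invariant subspace $U^\perp\subset\K^n$, so the action on $\K^{n*}/U$ is the contragredient of $\rho|_{U^\perp}$; and $U^\perp$ runs over the proper nonzero invariant subspaces of $\K^n$ as $U$ does over those of $\K^{n*}$. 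Combining the hypothesis ``$\overline{\rho(\Gamma)|_{U^\perp}}$ amenable'' with the elementary fact that, for an invariant subspace, $\overline{\rho^*(\Gamma)}$ is amenable iff both $\overline{\rho^*(\Gamma)|_U}$ and $\overline{\rho^*(\Gamma)\text{ on }\K^{n*}/U}$ are amenable (the kernel of the map to the product of the two actions being abelian), I would obtain the dual reformulation: with $V:=\K^{n*}$ and $H:=\overline{\rho^*(\Gamma)}$, the group $H$ is non-amenable and \emph{every} proper nonzero $H$-invariant subspace $U\subset V$ has $\overline{H|_U}$ non-amenable; the goal becomes that $\p(V)$ carries no $H$-invariant probability measure.

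I would then argue by contradiction, taking an $H$-invariant probability measure $\nu$ on $\p(V)$. Since $H$ is non-amenable its image in $\textnormal{PGL}(V)$ is non-compact (were it relatively compact, $H$ would be amenable, the kernel of $\textnormal{GL}(V)\to\textnormal{PGL}(V)$ being abelian), so the stabilizer of $\nu$ is non-compact and Furstenberg's lemma forces $\nu$ to be concentrated on a finite union of proper subspaces. Let $d_0$ be the smallest dimension of a proper subspace of positive $\nu$-measure, let $\lambda>0$ be the largest value of $\nu(\p(V'))$ over $d_0$-dimensional subspaces $V'$, and let $\mathcal F$ be the set of $d_0$-dimensional subspaces attaining $\lambda$. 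Distinct members of $\mathcal F$ meet in dimension $<d_0$, hence in a $\nu$-null set, so $\mathcal F$ is finite (at most $1/\lambda$ elements), and since $\nu$ is $H$-invariant and $H$ preserves dimension, $H$ permutes $\mathcal F$.

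Let $H_0\le H$ be the finite-index closed subgroup fixing every member of $\mathcal F$. For each $V'\in\mathcal F$ the measure $\nu|_{\p(V')}$ is $H_0$-invariant and, by minimality of $d_0$, charges no proper subspace of $V'$; applying Furstenberg's lemma inside $V'$ then shows that $\overline{H_0|_{V'}}$ has relatively compact image in $\textnormal{PGL}(V')$, hence is amenable. Now set $U:=\sum_{V'\in\mathcal F}V'$, a nonzero $H$-invariant subspace. Filtering $U$ by the partial sums $V_1\subseteq V_1+V_2\subseteq\cdots$ produces an $H_0$-invariant flag whose successive quotients are equivariant quotients of the $V'$, so each graded action of $H_0$ is amenable; by the usual composition-series argument ($\overline{H_0|_U}$ maps to the product of the graded actions with abelian unipotent kernel) $\overline{H_0|_U}$ is amenable, and hence so is $\overline{H|_U}$, as $H_0$ has finite index. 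If $U=V$ this says $H$ is amenable, contradicting non-amenability; if $U\subsetneq V$ it contradicts the dual minimality hypothesis. Either way one reaches a contradiction, which proves the lemma.

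The main obstacle is keeping the final invariant subspace genuinely $H$-invariant rather than merely $H_0$-invariant: Furstenberg's lemma naturally yields the finite permuted family $\mathcal F$ and amenability on the individual pieces $V'$, but the hypothesis only speaks of honestly invariant subspaces, so one must reassemble the pieces into the invariant span $U$ and transfer amenability from $H_0$ back to $H$ through the finite-index step and the flag. The second delicate point is getting the dual dictionary exactly right, so that ``amenable restriction to subspaces of $\K^n$'' becomes ``amenable quotient action on $\K^{n*}/U$'', which is what feeds the exact-sequence computation.
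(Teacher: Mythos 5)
Your proof is correct, and while it rests on the same two pillars as the paper's --- Furstenberg's lemma on stabilizers of measures on projective space, and the ``amenable-by-abelian'' device for propagating amenability through block-triangular stabilizers --- it is organized genuinely differently. The paper stays on the $\K^n$ side: it picks a \emph{single} subspace $W_1$ of minimal dimension from the finite family carrying the measure, passes to the finite-index subgroup $\Lambda$ stabilizing it, gets amenability on $W_1$ from a second application of Furstenberg and on $V_1=W_1^\perp$ from the hypothesis, and contradicts non-amenability of $\overline{\rho(\Lambda)}$. You instead dualize the hypothesis at the outset, keep the \emph{entire} family $\mathcal F$ of minimal-dimension, maximal-measure subspaces, and take their span $U=\sum_{V'\in\mathcal F}V'$, which is invariant under the whole group; amenability of $\overline{H|_U}$ then contradicts the dualized hypothesis (or non-amenability of $H$ itself if $U=V$). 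This buys something real: the paper's step ``by assumption, the closure of the image of $\Lambda$ in $\textnormal{GL}(V_1)$ is amenable'' applies the hypothesis to a subspace that is a priori only $\Lambda$-invariant rather than $\Gamma$-invariant --- a small gap whenever the canonical family is not reduced to a single $\Gamma$-invariant piece --- and your span-and-flag construction repairs exactly this, at the cost of a slightly longer composition-series argument (where, for a flag of length greater than two, the kernel is nilpotent rather than abelian, which is harmless). The one caveat, shared with the paper's own write-up, is that the steps of the form ``the image has compact (resp.\ amenable) closure and the kernel is abelian, hence the group is amenable'' should be justified by noting that the group in question is a \emph{closed subgroup} of the preimage of an amenable group under a quotient map with amenable kernel; a bare continuous injection with relatively compact image would not by itself force amenability.
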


\begin{proof}
Assume by contradiction that there exists a $\Gamma$-invariant probability measure $\mu$ on $\p(\K^{n*})$. As $\overline{\rho(\Gamma)}$ is non-compact, by a famous result of Furstenberg (see e.g.\ \cite{Zim}, Cor. 3.2.2), $\mu$ is supported on a canonical finite set $W_1,...,W_k$ of proper projective subspaces of $\p(\K^{n*})$; in particular $\Gamma$ permutes the $W_i$. Let us assume $W_1$ to be of minimal dimension. Let $\Lambda$ be a finite index subgroup in $\Gamma$ such that $W_1$ is $\Lambda$-invariant. Let $$V_1=\{v\in\K^n: \;\langle v,w\rangle=0\;\mbox{for every}\,w\in W_1\}$$ be the orthogonal of $W_1$ in $\K^n$, so that $V_1$ is $\rho(\Lambda)$-invariant.

View $\nu:=\frac{\mu|_{W_1}}{\mu(W_1)}$ as a $\Lambda$-invariant probability measure on $W_1$; since $\nu$ is not supported on a finite union of projective subspaces of $W_1$, by Furstenberg's result again, the closure of the image of $\Lambda$ in $\textnormal{Aut}(W_1)$ is compact, hence the closure of the image of $\Lambda$ in $\textnormal{GL}(V/V_1)$ is amenable. By assumption, the closure of the image of $\Lambda$ in $\textnormal{GL}(V_1)$ is amenable; using the fact that the kernel of the map $\textnormal{Stab}_{\textnormal{GL}_n(\K)}(V_1)\rightarrow \textnormal{GL}(V_1)\times \textnormal{GL}(V/V_1)$ is abelian, we deduce that $\overline{\rho(\Lambda)}$ is amenable, which is a contradiction.
\end{proof}

We now prove Proposition \ref{relT}.

\begin{proof} $(i)\Leftrightarrow(i')$ follows from the canonical isomorphism $\textnormal{GL}(K^n)\rightarrow \textnormal{GL}(K^{n*}):A\mapsto (^tA)^{-1}$.

$(i)\Rightarrow(ii)$ follows from Lemma \ref{suffrel(T)}, by taking for $V$ a $\Gamma$-invariant subspace such that $\overline{\rho(\Gamma)|_V}$ is non-amenable, and minimal for that property.

$(ii)\Leftrightarrow(ii')$ is Proposition 3.1.9 in \cite{Cor}.

$(ii')\Rightarrow (i)$ By contraposition, assume that $\overline{\rho(\Gamma)}$ is amenable; then so is $H:=\K^n\rtimes\overline{\rho(\Gamma)}$. Let $V$ be a non-zero $\Gamma$-invariant subspace of $\K^n$; denote by $\pi$ the representation of $V\rtimes\Gamma$ obtained by composing the inclusion $V\rtimes\Gamma\hookrightarrow \K^n\rtimes\overline{\rho(\Gamma)}=H$ with the left regular representation of $H$ on $L^2(H)$. Then $\pi$ weakly contains the trivial representation, but has no non-zero $V$-invariant vector, as $V$ is non-compact in $H$. So the pair $(V\rtimes\Gamma,V)$ does not have relative Property (T).

$(i')\Rightarrow(iii)$ Assume by contradiction that $\overline{\rho^*(\Gamma)}$ is non-amenable, and there exists a $(\K^n\rtimes\Gamma)$-invariant mean on the Borel subsets of $\K^n$. From the latter assumption, for every proper $\Gamma$-invariant subspace $V\subset\K^n$, there exists a $((\K^n/V)\rtimes\Gamma)$-invariant mean on the Borel subsets of $\K^n/V$, hence a $\Gamma$-invariant mean on Borel subsets of $(\K^n/V)\backslash\{0\}$. Pushing forward to the projective space $\p(\K^n/V)$ (and appealing to the Riesz representation Theorem), there exists a $\Gamma$-invariant probability measure on Borel subsets of $\p(\K^n/V)$. This contradicts lemma \ref{suffrel(T)} when applied to a $\Gamma$-invariant subspace $W\subset(\K^n)^*$ minimal for the property that  $\overline{\rho^*(\Gamma)|_W}$ is non-amenable, since $W^*=\K^n/W^\perp$, where $W^\perp$ is the orthogonal of $W$.

$(iii)\Rightarrow (i)$ By contraposition, assuming $\overline{\rho(\Gamma)}$ to be amenable, we let it act on the compact convex set of $\K^n$-invariant means on $\K^n$; any $\overline{\rho(\Gamma)}$-fixed point will be a $(\K^n\rtimes\Gamma)$-invariant mean.

Observe that $(ii')\Rightarrow(iv)$ is always true: if the pair $(V\rtimes\Gamma,V)$ has relative Property (T), then so does the pair $(\K^n\rtimes\Gamma,V)$, preventing $\K^n\rtimes\Gamma$ from being Haagerup. Similarly, $(iii)\Rightarrow(v)$ is always true, as a particular case of a result of Ozawa (Theorem A in \cite{Oz}).

Assume now that $\Gamma$ is Haagerup (resp. weakly amenable). To prove $(iv)\Rightarrow (i)$ (resp. $(v)\Rightarrow(i)$), assume by contraposition that $\overline{\rho(\Gamma)}$ is amenable (so that $H$ is also amenable) and consider the map:
$$\K^n\rtimes\Gamma\rightarrow \Gamma\times H:(x,\gamma)\mapsto(\gamma,(x,\rho(\gamma));$$
it identifies $\K^n\rtimes\Gamma$ with a closed subgroup of $\Gamma\times H$. Since by assumption $\Gamma\times H$ is Haagerup (resp. is weakly amenable), so is $\K^n\rtimes\Gamma$.
\end{proof}

\subsection{Positive results on Conjecture \ref{quest}}\label{posconj}

Here are our partial results regarding Conjecture \ref{quest} from the introduction. Let $\Gamma$ be a finitely generated group acting on $\R^n$ by a linear representation $\rho$. Recall that $\rho$ is {\em distal} if all eigenvalues of all elements have modulus 1, and that $v\in\R^n$ is {\em exponentially distorted} in $\R^n\rtimes\Gamma$ if $\overline{\lim}_{m\rightarrow\infty}\frac{|mv|_S}{\log(m)}<\infty$, where $|\cdot|_S$ denotes the word length in $\R^n\rtimes\Gamma$. We say that $\R^n$ is {\em exponentially distorted} if every $v\in\R^n$ is.

The set of elements of $\R^n$ which are exponentially distorted in $\R^n\rtimes\Gamma$, is a $\rho(\Gamma)$-invariant subspace $\textnormal{Exp}_\Gamma(\R^n)$. 

\begin{Lem}\label{distalquotient} The action of $\Gamma$ on $\R^n/\textnormal{Exp}_\Gamma(\R^n)$ is distal.
\end{Lem}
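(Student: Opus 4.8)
The plan is to reduce distality of the quotient action to a single statement about where the expanding and contracting directions of each element must land. For $\gamma\in\Gamma$ write $E^+_\gamma$ (resp.\ $E^-_\gamma$) for the real, $\rho(\gamma)$-invariant subspace of $\R^n$ spanned by the generalized eigenvectors of $\rho(\gamma)$ with eigenvalue of modulus $>1$ (resp.\ $<1$); note that $E^-_\gamma=E^+_{\gamma^{-1}}$ and that $E^+_\gamma\oplus E^-_\gamma$ is exactly the sum of all generalized eigenspaces with eigenvalue of modulus $\ne 1$. The key claim to establish is that $E^+_\gamma\oplus E^-_\gamma\subseteq\textnormal{Exp}_\Gamma(\R^n)$ for every $\gamma$. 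Granting this, the conclusion is immediate: setting $V=\textnormal{Exp}_\Gamma(\R^n)$, which is $\rho(\gamma)$-invariant, the characteristic polynomial of $\rho(\gamma)$ factors as that of $\rho(\gamma)|_V$ times that of the induced map $\bar\rho(\gamma)$ on $\R^n/V$. Since $E^+_\gamma\oplus E^-_\gamma\subseteq V$ is invariant and accounts for the full multiplicity of every eigenvalue of modulus $\ne 1$, all such eigenvalues are absorbed into the first factor, so every root of the second factor has modulus $1$. As distality is checked element by element, this holds for all $\gamma$ and is precisely distality of the action on $\R^n/\textnormal{Exp}_\Gamma(\R^n)$.

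The heart of the matter, and the only step I expect to require real work, is the distortion estimate $E^+_\gamma\subseteq\textnormal{Exp}_\Gamma(\R^n)$. I would prove the sharper bound that $|v|_S\le C\log\|v\|$ for all $v\in E^+_\gamma$ with $\|v\|\ge 2$, with $C$ depending on $\gamma$. Fix such a $v$ and put $u_k:=\rho(\gamma)^{-k}v\in E^+_\gamma$. Conjugation in the semidirect product gives $\gamma^k u_k\gamma^{-k}=\rho(\gamma)^k u_k=v$ as a translation, so $|v|_S\le 2k+|u_k|_S$. Because the compact generating set contains a neighbourhood of $0$ in $\R^n$, any vector of norm $\le 1$ has word length $O(1)$; it therefore suffices to pick $k$ making $\|u_k\|\le 1$ while keeping $k=O(\log\|v\|)$. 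The spectral radius of $\rho(\gamma)^{-1}|_{E^+_\gamma}$ equals $r^{-1}<1$, where $r>1$ is the least modulus of an eigenvalue of $\rho(\gamma)$ on $E^+_\gamma$, so from the Jordan form (or Gelfand's formula) $\|\rho(\gamma)^{-k}|_{E^+_\gamma}\|\le C'k^{d}r^{-k}$ for a fixed $d$. Choosing $k=\lceil 2\log\|v\|/\log r\rceil$, enlarged by a fixed additive constant to swallow the polynomial factor $k^d$, forces $\|u_k\|\le 1$ and hence $|v|_S\le 2k+O(1)=O(\log\|v\|)$. Applying this to the multiples $mv\in E^+_\gamma$, of norm $m\|v\|$, yields $|mv|_S\le C(\log m+\log\|v\|)$, so $\overline{\lim}_{m\to\infty}|mv|_S/\log m<\infty$ and $v\in\textnormal{Exp}_\Gamma(\R^n)$.

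Complex and non-diagonalizable eigenvalues cause no difficulty: $E^+_\gamma$ is defined as the real form of the sum of the complex generalized eigenspaces of modulus $>1$, hence a genuine real invariant subspace on which $\rho(\gamma)^{-1}$ has spectral radius $<1$, which is all the estimate above uses — the rotational part of a complex eigenvalue is harmless because the bound only concerns the operator norm of $\rho(\gamma)^{-k}$. Finally $E^-_\gamma\subseteq\textnormal{Exp}_\Gamma(\R^n)$ follows by running the same argument for $\gamma^{-1}$, using $E^-_\gamma=E^+_{\gamma^{-1}}$ together with the fact that $\textnormal{Exp}_\Gamma(\R^n)$ is intrinsic and does not depend on the element chosen. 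Since the whole argument is carried out one $\gamma$ at a time, no uniformity of the constant $C$ over a generating set is needed.
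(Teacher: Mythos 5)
Your proof is correct, and while it rests on the same core computation as the paper's, it is organized quite differently. The paper argues by contradiction entirely in the quotient: if the action on $\R^n/\textnormal{Exp}_\Gamma(\R^n)$ is not distal, some element acts with an eigenvalue of modulus $\neq 1$ on an irreducible line or plane of the quotient, a ``simple verification'' (which is exactly your conjugation estimate $v=\gamma^k(\rho(\gamma)^{-k}v)\gamma^{-k}$, giving $|v|_S=O(\log\|v\|)$) shows that line or plane is exponentially distorted in $(\R^n/\textnormal{Exp}_\Gamma(\R^n))\rtimes\Gamma$, and one then lifts back to get a contradiction. You instead work directly upstairs and prove the stronger statement that $E^+_\gamma\oplus E^-_\gamma\subseteq\textnormal{Exp}_\Gamma(\R^n)$ for every $\gamma$, deducing distality of the quotient from the factorization of characteristic polynomials. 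This buys two things: you never need the paper's lifting step (``exponentially distorted in the quotient implies the lifts are exponentially distorted in $\R^n\rtimes\Gamma$,'' which tacitly uses that exponential distortion of each vector of the subspace $\textnormal{Exp}_\Gamma(\R^n)$ upgrades to a uniform logarithmic bound across that subspace), and you never need to reduce to an irreducible line or plane, since your estimate only uses that $\rho(\gamma)^{-1}$ has spectral radius $<1$ on the whole generalized eigenspace, so Jordan blocks and complex eigenvalues are handled in one stroke. The only points worth flagging are cosmetic: the bound $|v|_S\le 2k+|u_k|_S$ should read $2k|\gamma|_S+|u_k|_S$ (harmless, as $\gamma$ is fixed and the factor is absorbed into $C$), and one should say ``we may choose the compact generating set to contain a ball of $\R^n$,'' which is legitimate since the finiteness of $\overline{\lim}_m |mv|_S/\log m$ is independent of that choice.
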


\begin{proof} If not, some element of $\Gamma$ acts with an eigenvalue of modulus $\neq 1$; this element acts irreducibly on a line or plane, and a simple verification then shows that elements of this line or plane are exponentially distorted in $(\R^n/\textnormal{Exp}_\Gamma(\R^n))\rtimes \Gamma$; it immediately follows that lifts of these elements are exponentially distorted in $\R^n\rtimes \Gamma$, contradicting the definition of $\textnormal{Exp}_\Gamma(\R^n)$. 
\end{proof}

In the following proposition, part (ii) can be considered as the ``generic" setting of Conjecture \ref{quest}.

\begin{Prop}\label{biLip2} \begin{enumerate}
\item[a)] Conjecture \ref{quest} holds under the assumption that there exists a finitely generated subgroup $\Lambda\subset \Gamma$ such that $\rho(\Lambda)$ is a cocompact lattice in $\overline{\rho(\Gamma)}$. 
\item[b)] The following are equivalent
\begin{itemize}
\item[i)] $\R^n$ is exponentially distorted in $\R^n\rtimes\Gamma$;
\item[ii)] $\R^n$ is exponentially distorted in $\R^n\rtimes\overline{\rho(\Gamma)}$; 
\item[iii)] There is no invariant subspace of the dual $\R^{n*}$ on which the action is distal.
\end{itemize}
\end{enumerate}
\end{Prop}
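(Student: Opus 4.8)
The plan is to prove (a) by a Milnor--\v{S}varc argument and (b) by a duality argument built on Lemma \ref{distalquotient}; write $H=\overline{\rho(\Gamma)}$, fix compact generating sets, and let ``$\asymp$'' denote equality of word metrics up to multiplicative and additive constants. For part (a), the canonical homomorphism $\R^n\rtimes_\rho\Gamma\to\R^n\rtimes H$ restricts to the identity on $\R^n$ and carries a compact generating set to a bounded set, so $|v|_{\R^n\rtimes H}\lesssim|v|_{\R^n\rtimes_\rho\Gamma}$ for $v\in\R^n$; only the reverse inequality is at stake. The structural input is that $\R^n\rtimes\rho(\Lambda)$ is a \emph{cocompact closed subgroup} of $\R^n\rtimes H$: indeed $(\R^n\rtimes H)/(\R^n\rtimes\rho(\Lambda))\cong H/\rho(\Lambda)$ is compact because $\rho(\Lambda)$ is a cocompact lattice in $H$. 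By the Milnor--\v{S}varc lemma the inclusion is then a quasi-isometry, so $|v|_{\R^n\rtimes\rho(\Lambda)}\asymp|v|_{\R^n\rtimes H}$ on $\R^n$.

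It remains to compare $\R^n\rtimes_\rho\Gamma$ with $\R^n\rtimes\rho(\Lambda)$. Since $\Lambda\subseteq\Gamma$ we have $|v|_{\R^n\rtimes_\rho\Gamma}\lesssim|v|_{\R^n\rtimes_\rho\Lambda}$, so it suffices to see that the distortion of $\R^n$ in $\R^n\rtimes_\rho\Lambda$ depends only on the image action, i.e.\ that $|v|_{\R^n\rtimes_\rho\Lambda}\asymp|v|_{\R^n\rtimes\rho(\Lambda)}$. Using $(0,\lambda)(v,1)(0,\lambda)^{-1}=(\rho(\lambda)v,1)$, the word length of $v$ is comparable to the least cost $\sum_j(|\lambda_j|_\Lambda+1)$ of a ``transport expression'' $v=\sum_j\rho(\lambda_j)c_j$ with the $c_j$ in a fixed generating set of $\R^n$; since the quotient word length obeys $|g|_{\rho(\Lambda)}=\min\{|\lambda|_\Lambda:\rho(\lambda)=g\}$, lifting an optimal expression downstairs reproduces the transport cost upstairs, while the quotient map gives the opposite inequality. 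Chaining $|v|_{\R^n\rtimes H}\lesssim|v|_{\R^n\rtimes_\rho\Gamma}\lesssim|v|_{\R^n\rtimes_\rho\Lambda}\asymp|v|_{\R^n\rtimes\rho(\Lambda)}\asymp|v|_{\R^n\rtimes H}$ closes the cycle and shows the identity on $\R^n$ is the desired quasi-isometry.

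For part (b), I would first note that (iii) is insensitive to replacing $\Gamma$ by $H$: the $\Gamma$- and $H$-invariant subspaces of $\R^{n*}$ coincide by continuity, and on such a subspace the condition ``all eigenvalues have modulus $1$'' is closed and holds on the dense set $\rho(\Gamma)$, hence on all of $H$. The engine is the sublemma that a distal linear action has matrix norm growing at most polynomially in the length (a consequence of the structure of distal linear groups, which are triangularizable with relatively compact action on the diagonal subquotients); combined with the transport expression this gives $\|v\|\lesssim(\textnormal{length})^{D+1}$, so a distal action exponentially distorts no nonzero vector. Granting this, put $E=\textnormal{Exp}_\Gamma(\R^n)$. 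For (iii)$\Rightarrow$(i): by Lemma \ref{distalquotient} the action on $\R^n/E$ is distal, hence so is its dual on $E^\perp\subseteq\R^{n*}$, so $E^\perp$ is a distal invariant subspace of the dual and (iii) forces $E^\perp=0$, i.e.\ $E=\R^n$. For (i)$\Rightarrow$(iii): if $W\subseteq\R^{n*}$ is distal and invariant, the action on $\R^n/W^\perp\cong W^*$ is distal; but every vector of $\R^n$ is exponentially distorted and this passes to the quotient (the quotient map being Lipschitz), so the sublemma forces $\R^n/W^\perp=0$ and $W=0$. Finally (ii)$\Leftrightarrow$(iii) is the identical argument with $H$ in place of $\Gamma$ (the proofs of Lemma \ref{distalquotient} and of the sublemma go through verbatim for $H$), which is legitimate because (iii) is the same condition for both groups.

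I expect the main obstacles to be, in (a), the lower bound of the transport-length formula together with the nontrivial direction of Milnor--\v{S}varc reconstructing a group path from an ambient one, and, in (b), the sublemma pinning distal actions to polynomial (hence sub-exponential) distortion, which is exactly what makes $E^\perp$ the maximal distal invariant subspace of the dual.
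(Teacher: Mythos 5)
Your overall architecture matches the paper's: for (a) you sandwich $\R^n\rtimes_\rho\Gamma$ between $\R^n\rtimes_\rho\Lambda$ and $\R^n\rtimes\overline{\rho(\Gamma)}$ using cocompactness of $\rho(\Lambda)$, and for (b) you combine Lemma \ref{distalquotient} with the Conze--Guivarc'h structure of distal linear groups. But one step of (a) is genuinely false as stated: the claim that the word length of $v\in\R^n$ in $\R^n\rtimes_\rho\Lambda$ is comparable to the least cost $\sum_j(|\lambda_j|_\Lambda+1)$ over expressions $v=\sum_j\rho(\lambda_j)c_j$. Only $|v|\lesssim\sum_j(|\lambda_j|+1)$ holds. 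For the reverse, take $\Lambda=\Z$ acting on $\R^2$ through the unipotent matrix of Example \ref{Heis}, so that $\rho(t^a)(x,y)=(x+ay,y)$ and $\R^2\rtimes\Z$ is a Heisenberg group whose center is the line $y=0$: the vector $(N,0)$ is a commutator of elements of size about $\sqrt{N}$, hence has word length $\asymp\sqrt{N}$, while any expression $(N,0)=\sum_j\rho(t^{a_j})(x_j,y_j)$ with $|(x_j,y_j)|\le 1$ forces $\sum_j y_j=0$ and $N=\sum_j x_j+\sum_j a_jy_j\le\sum_j(|a_j|+1)$, so every transport expression costs at least $N$. Since you use precisely the false direction (short word downstairs $\Rightarrow$ cheap transport expression $\Rightarrow$ short word upstairs) to get $|v|_{\R^n\rtimes_\rho\Lambda}\lesssim|v|_{\R^n\rtimes\rho(\Lambda)}$, that link of your chain breaks. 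The statement you need is nevertheless true --- it is what the paper dispatches with ``first observe that it holds when $H_2=H_1/\Ker(\rho)$'' --- and has a one-line proof you should substitute: taking the image of a generating set of $\Lambda$ as generating set of $\rho(\Lambda)$, lift a word of length $\ell$ for $(v,1)$ letter by letter; the lift represents $(v,k)$ with $k\in\Ker(\rho|_\Lambda)$ a product of at most $\ell$ generators, whence $|(v,1)|\le 2\ell$ upstairs. (Your transport argument can also be salvaged by charging consecutive displacements $|\lambda_{j-1}^{-1}\lambda_j|$ instead of $|\lambda_j|$; that cost is genuinely comparable to word length and still lifts through $\Lambda\to\rho(\Lambda)$.) With this repair, part (a) coincides with the paper's proof.

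Part (b) is correct, and the error above does not propagate there: where you invoke a transport expression you only need the harmless direction, namely that a word of length $\ell$ yields $v=\sum_j\rho(g_j)c_j$ with at most $\ell$ terms and $|g_j|\le\ell$. Your route differs only mildly from the paper's: you extract from the distal structure theorem a polynomial bound $\|\rho(g)\|\lesssim(1+|g|)^{D}$ and conclude that a distal action exponentially distorts no nonzero vector, whereas the paper quotes \cite{CG} to pass to a quotient preserving a scalar product and contradicts distortion there; these are two faces of the same structure result. Your hub-and-spoke organization of the equivalences through (iii), versus the paper's cycle (i)$\Rightarrow$(ii)$\Rightarrow$(iii)$\Rightarrow$(i), is immaterial.
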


\begin{proof} (a) We have to check that in some cases, in restriction to $\R^n$, the inclusion $\R^n\rtimes_\rho H_1\rightarrow \R^n\rtimes H_2$ is quasi-isometric. Our goal is to prove it for $H_1=\Gamma$ and $H_2=\overline{\rho(\Gamma)}$. First observe that it holds when $H_1\in\{\Lambda,\Gamma\}$ and $H_2=H_1/\Ker(\rho)$. So we can assume that $\rho$ is injective. Thus $\R^n\rtimes\Lambda$ stands as a closed cocompact subgroup of $\R^n\rtimes\rho(\Gamma)$, so the composite map $\R^n\rtimes\Lambda\to\R^n\rtimes\Gamma\to\R^n\rtimes\overline{\rho(\Gamma)}$ is quasi-isometric in restriction to $\R^n$. Since both maps are large-scale Lipschitz in restriction to $\R^n$, it follows that they are quasi-isometries in restriction to $\R^n$.

(b) $(i)\Rightarrow(ii)$ is clear. 

$(ii)\Rightarrow(iii)$ If $\R^n$ is exponentially distorted and there is some nonzero distal subspace in the dual, by passing to the quotient we can suppose the action to be distal and $n\ge 1$. By \cite{CG}, passing again to the quotient, we can suppose that the action on $\R^n$ preserves a scalar product. This is in contradiction with exponential distortion. 

$(iii)\Rightarrow(i)$ Suppose that there is no nonzero invariant distal subspace in the dual.  Then there is no nonzero distal quotient $\R^n/W$, with $W$ a proper $\Gamma$-invariant subspace. The result then follows from lemma \ref{distalquotient}.
\end{proof} 

\section{Some Bass-Serre theory}

We recall some relevant definitions.

A {\it graph} is a pair $X=(V,E)$ where $V$ is the set of vertices, $E$ is the set of oriented edges; $E$ is equipped with a fixed-point free
involution $e\mapsto\overline{e}$ and with maps $E\rightarrow V: e\mapsto e_+$ and $E\rightarrow V: e\mapsto e_-$ ($e_-$ is the origin and $e_+$ the
extremity of the edge $e$), such that $\overline{e}_+=e_-$ for every $e\in E$. An {\it orientation} $A$ of $X$ is the choice of a fundamental
domain for the involution on $E$.

A {\it graph of groups} $({\mathcal G}, X)$ is the data of a connected graph $X=(V,E)$ and, for every $v\in V$ a group $G_v$, for every edge
$e\in E$ a group $G_e$ such that $G_e =G_{\overline{e}}$ and a monomorphism $\sigma_e: G_e\rightarrow G_{e_+}$.

Let $F(E)=\left\langle (t_e)_{e\in E}\mid\emptyset\right\rangle$ be the free group on $E$. Denote by $F(\mathcal{G},X)$ the quotient of the free product $(\Conv_{v\in V}G_v)\ast F(E)$ by the following 
set of relations:
\begin{equation}
\left\{
\begin{array}{ll}
t_e\sigma_e(g_e)t_e^{-1}=\sigma_{\overline{e}}(g_e) & (e\in E,\,g_e\in G_e)\\
t_et_{\overline{e}}=1 & (e\in E) \\
 \end{array}\right.
\end{equation}
  
Let $T$ be a maximal tree in $X$, and $v_0\in V$ be a base-vertex.  The {\it Bass-Serre fundamental group} $G=\pi_1({\mathcal G},X,T)$ is the quotient of $F(\mathcal{G},X)$ by the relations 
$$t_e=1\;(e\in E(T))$$
 where $E(T)$ is the edge set of $T$. 

 Alternatively, the Bass-Serre fundamental group $\pi_1({\mathcal G},X,v_0)$ can be described as the subgroup of $F(\mathcal{G},X)$ consisting in elements $g_{v_0}t_{e_1}g_{v_1}t_{e_2}\penalty0 \dots\penalty0 t_{e_n}g_{v_n}$ where $e_1,\dots,e_n$ is a circuit of length $n$ in $X$, starting and ending at $v_0$, where $v_i=(e_i)_+=(e_{i+1})_-$ (for $1\leq i\leq n$), and $g_i\in G_{v_i}$ for every $i$. It is proved in \cite{Se}, Proposition 20 in Chapter 1, that the quotient map $F(\mathcal{G},X)\rightarrow \pi_1({\mathcal G},X,T)$ induces an isomorphism  $\pi_1({\mathcal G},X,v_0)\rightarrow\pi_1({\mathcal G},X,T)$.

The Bass-Serre fundamental group $G$ of a graph of groups $(\mathcal{G},X)$ acts on a tree $\tilde{X}=(\tilde{V},\tilde{E})$, where $\tilde{V}=\coprod_{v\in V}G/G_v$ and $\tilde{E}=\coprod_{e\in E}G/G_e$. Denote by $\iota:G\to\textnormal{Aut}(\tilde{X})$ the corresponding homomorphism. Define $(gG_e)_+=gG_{e_+}$ and $(gG_e)_-=gG_{e_-}$ for $e\in T$, and $(gG_e)_+=gG_{e_+}$ and $(gG_e)_-=gt_e^{-1}G_{e_-}$ for $e\in E\smallsetminus T$. The tree $\tilde{X}$ is the {\it universal cover} of $(\mathcal{G},X)$; for all this, see Theorem 12 in Chapter I of \cite{Se}. It is clear that $\tilde{X}$ is locally finite if and only if, for every edge $e$, the image of every injection $\sigma_e$ has finite index in $G_{e_+}$. 

Choosing both a maximal tree $T$ and an orientation $A$ in $X$, we denote by $H_{T,A}$ the subgroup of $G$ generated by the stable letters $t_e$'s, with $e\in A\smallsetminus T$. It is known that $H_{T,A}$ is the free group on $A\smallsetminus T$, which we can identify with the fundamental group (in the usual sense) of the graph $X$. There is then a natural epimorphism 
$$\varphi:G=\pi_1({\mathcal G},X,T) \twoheadrightarrow H_{T,A}: \left\{
\begin{array}{ccc}
g_v\mapsto 1 & for & g_v\in G_v\\
t_e\mapsto t_e & for & e\in A\smallsetminus T
\end{array}\right..$$

\begin{Lem}\label{phiextends} Assume that $\tilde{X}$ is locally finite, so that $\textnormal{Aut}(\tilde{X})$ is a locally compact group (with the topology of pointwise convergence). The homomorphism $\varphi:G=\pi_1({\mathcal G},X,T) \twoheadrightarrow H_{T,A}$ factors through a continuous homomorphism $\bar{\varphi}$ on the closure $\overline{\iota(G)}$ of $G$ in $\textnormal{Aut}(\tilde{X})$.
\end{Lem}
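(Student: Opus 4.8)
The plan is to build $\bar\varphi$ by a geometric/homotopical recipe and then read off continuity from the local structure of $\Aut(\tilde X)$. Identify $H_{T,A}$ with the usual fundamental group $\pi_1(X,v_0)$ as in the paragraph preceding the lemma, write $p\colon\tilde X\to G\backslash\tilde X=X$ for the quotient map, and let $\tilde v_0=1\cdot G_{v_0}\in\tilde V$ be the base vertex lying over $v_0$. The first thing I would record is that $\iota(G)$ and $\overline{\iota(G)}$ have the \emph{same orbits} on the discrete vertex set $\tilde V$: if $h=\lim_i\iota(g_i)$ in the topology of pointwise convergence, then for each vertex $u$ one has $\iota(g_i)u=hu$ for $i$ large, so $hu\in\iota(G)u$. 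Consequently $p$ is invariant under all of $\overline{\iota(G)}$, i.e.\ $p(hx)=p(x)$ for every $h\in\overline{\iota(G)}$ and every $x\in\tilde X$.

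This invariance lets me define, for $h\in\overline{\iota(G)}$, the element $\bar\varphi(h)\in\pi_1(X,v_0)\cong H_{T,A}$ to be the homotopy class of the loop $p(\tilde\gamma_h)$, where $\tilde\gamma_h$ is any path in $\tilde X$ from $\tilde v_0$ to $h\tilde v_0$. It is indeed a loop based at $v_0$ because $p(h\tilde v_0)=p(\tilde v_0)=v_0$, and its class is independent of the path chosen since $\tilde X$ is a tree, so any two paths with the same endpoints are homotopic rel endpoints. To check multiplicativity I would decompose a path from $\tilde v_0$ to $h_1h_2\tilde v_0$ as $\tilde\gamma_{h_1}$ followed by $h_1\tilde\gamma_{h_2}$ (the latter runs from $h_1\tilde v_0$ to $h_1h_2\tilde v_0$); projecting by $p$ and using its $\overline{\iota(G)}$-invariance turns this into $p(\tilde\gamma_{h_1})$ followed by $p(\tilde\gamma_{h_2})$, so $\bar\varphi(h_1h_2)=\bar\varphi(h_1)\bar\varphi(h_2)$. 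That $\bar\varphi\circ\iota=\varphi$ is then a routine verification on generators of $G$: an element $g_v$ of a vertex group fixes its vertex, so the projected loop is null-homotopic and $\bar\varphi(\iota(g_v))=1=\varphi(g_v)$, whereas a stable letter $t_e$ with $e\in A\smallsetminus T$ translates across the edge over $e$, so the projected loop is exactly the standard generator of $\pi_1(X,v_0)$ corresponding to $t_e$.

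Continuity is then the easy payoff. Let $K=\textnormal{Stab}_{\overline{\iota(G)}}(\tilde v_0)$, which is an open subgroup of $\overline{\iota(G)}$ (indeed compact, by local finiteness of $\tilde X$), since pointwise stabilizers of vertices are open in the topology of pointwise convergence. For $h\in K$ the path $\tilde\gamma_h$ is constant, hence $\bar\varphi(h)=1$; thus $\Ker\bar\varphi$ contains the open subgroup $K$, so it is open, and $\bar\varphi$ is locally constant into the discrete group $H_{T,A}$, therefore continuous.

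The step I expect to need the most care is the homomorphism property, and in particular the invariance of $p$ under the whole closure $\overline{\iota(G)}$. The tempting shortcut — choosing for each $h$ some $g$ with $\iota(g)\in hK$ and declaring $\bar\varphi(h)=\varphi(g)$ — is well defined (because $\iota^{-1}(K)=\textnormal{Stab}_G(\tilde v_0)=G_{v_0}\subseteq\Ker\varphi$) but \emph{fails to be multiplicative}, since $K$ is not normal in $\overline{\iota(G)}$. This is precisely why the genuinely geometric definition above, together with the observation that $\overline{\iota(G)}$ preserves the $G$-orbits, seems to be the right route.
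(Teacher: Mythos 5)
Your argument is correct, but it takes a more topological route than the paper's. The paper's proof is a three-line algebraic version of the same idea: it defines $c:\tilde{V}\to H_{T,A}$, $gG_v\mapsto\varphi(g)$ (well defined since $G_v\subset\Ker\varphi$), notes the equivariance $c(gx)=\varphi(g)c(x)$, and extends $\varphi$ to $\overline{\iota(G)}$ by the formula $\bar\varphi(h)=c(hx)c(x)^{-1}$. Under the identification of $H_{T,A}$ with $\pi_1(X,v_0)$, your class $[p(\tilde\gamma_h)]$ is exactly $c(h\tilde v_0)$, so the two constructions coincide. What your version buys is a transparent geometric picture together with explicit verifications of multiplicativity, of $\bar\varphi\circ\iota=\varphi$, and of continuity via the open stabilizer $K$ (the paper leaves all of these implicit); what the paper's version buys is brevity and the avoidance of paths, homotopies, and the identification $G\backslash\tilde{X}=X$.

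One correction to your closing paragraph: the ``shortcut'' $\bar\varphi(h)=\varphi(g)$ for $\iota(g)\in hK$ does \emph{not} fail to be multiplicative --- it is precisely $h\mapsto c(h\tilde v_0)$, i.e.\ the paper's $\bar\varphi$, hence equals your map. The point is that, by pointwise convergence, the quantity $c(hx)c(x)^{-1}$ is independent of the vertex $x$: for fixed $x,y$ one has $hx=\iota(g_i)x$ and $hy=\iota(g_i)y$ for the same large $i$, and both expressions equal $\varphi(g_i)$. Base-point independence then gives $\bar\varphi(h_1h_2)=c(h_1(h_2x))c(h_2x)^{-1}\cdot c(h_2x)c(x)^{-1}=\bar\varphi(h_1)\bar\varphi(h_2)$. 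So non-normality of $K$ is not an obstruction; what is needed is the cocycle $c$ on \emph{all} vertices rather than just the coset decomposition modulo $K$ --- and that is exactly the same orbit-preservation observation you use to make $p$ invariant under $\overline{\iota(G)}$.
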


\begin{proof} Define a map $c:\tilde{V}\rightarrow H_{T,A}: gG_v \mapsto \varphi(g)$ (this is well-defined as $G_v\subset \Ker\varphi$). Then $c(gx)=\varphi(g)c(x)$ for $x\in\tilde{V},g\in G$. Rewriting $\varphi(g)=c(gx)c(x)^{-1}$ allows to extend $\varphi$ from $G$ to $\overline{\iota(G)}$.
\end{proof}

\section{Graphs of $\Z^n$'s}

Let $({\mathcal G},X)$ be a graph of groups. We assume that $X$ is finite, so that it has the homotopy type of a bouquet of $d$ circles, with $d=|A\smallsetminus T|$. We write $H_d$ rather than $H_{T,A}$. 

We also assume that all vertex groups and edge groups are equal to $\Z^n$. So every injection $\sigma_e$ is given by some $(n\times n)$-matrix with integer coefficients and non-zero determinant, which we also denote by $\sigma_e$. Let $G$ be the Bass-Serre fundamental group of this graph of groups. 

\subsection{Amenability of $G$}

In this subsection, we characterize the few exceptional cases where $G$ is amenable.

\begin{Prop}\label{amen} Let $G$ be the Bass-Serre fundamental group of a finite graph of $\Z^n$'s. The following are equivalent:
\begin{enumerate}
\item[i)] $G$ is amenable;
\item[ii)] $G$ does not contain any subgroup isomorphic to $\F_2$;
\item[ii')] $G$ does not contain any subgroup isomorphic to $\F_2$ and quasi-isometrically embedded;
\item[ii'')] $G$, when viewed as acting on the Bass-Serre tree $\tilde{X}$, does not contain any pair of hyperbolic elements with no common fixed points on the boundary $\partial\tilde{X}$;
\item[iii)] $G$ has a subgroup of index at most two isomorphic to $\Z^n$, or is an ascending HNN-extension $G=\HNN(\Z^n,\theta)$, where $\theta$ is an injective endomorphism of $\Z^n$.
\end{enumerate}
\end{Prop}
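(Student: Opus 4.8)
The plan is to prove the cycle $(i)\Rightarrow(ii)\Rightarrow(ii')\Rightarrow(ii'')\Rightarrow(iii)\Rightarrow(i)$, of which three links are immediate. For $(i)\Rightarrow(ii)$, subgroups of amenable groups are amenable whereas $\F_2$ is not, so an amenable $G$ contains no $\F_2$. For $(ii)\Rightarrow(ii')$, a quasi-isometrically embedded free subgroup is in particular a free subgroup. For $(iii)\Rightarrow(i)$, a group with a subgroup of index $\le 2$ isomorphic to $\Z^n$ is virtually abelian, while an ascending $\HNN(\Z^n,\theta)$ is metabelian (the kernel of the map to $\Z$ is the increasing union $\bigcup_k\theta^{-k}(\Z^n)\subset\Q^n$, an abelian normal subgroup); either way $G$ is amenable. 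For $(ii')\Rightarrow(ii'')$ I would argue by contraposition using ping-pong: if $g,h$ are hyperbolic on $\tilde X$ with disjoint pairs of endpoints, then for $N$ large the Schottky pair $(g^N,h^N)$ generates a free group of rank $2$ whose orbit map matches the word metric to the tree metric up to multiplicative and additive constants, so this $\F_2$ is quasi-isometrically embedded, contradicting $(ii')$.

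The substance is $(ii'')\Rightarrow(iii)$, and this is where I expect the work to lie. First I would produce dynamics. Since $X$ is finite, $G$ is finitely generated, so if every element of $G$ were elliptic, Serre's lemma would give a global fixed vertex $v$, whence $G=\textnormal{Stab}_G(v)\cong\Z^n$ and the first alternative of $(iii)$ holds. Otherwise $G$ contains a hyperbolic element, and I would invoke the standard trichotomy for finitely generated groups acting on trees with a hyperbolic element: such a group is of general type (two hyperbolics with disjoint end-pairs), or lineal (stabilizes a line), or focal (fixes a unique end). Hypothesis $(ii'')$ is exactly the exclusion of general type, so $G$ either stabilizes a line $L\subset\tilde X$ or fixes a unique end $\xi\in\partial\tilde X$.

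In the focal case I would use the Busemann cocycle at $\xi$ to obtain a homomorphism $\beta\colon G\to\Z$, surjective because a hyperbolic element translates toward $\xi$. Its kernel $N$ is the set of elliptic elements fixing $\xi$, i.e.\ the increasing union of the vertex stabilizers along a ray pointing to $\xi$; cocompactness collapses that ray to a single orbit, so $N=\bigcup_k\theta^{-k}(\Z^n)$ for one injective endomorphism $\theta\colon\Z^n\to\Z^n$, and $G=N\rtimes\Z\cong\HNN(\Z^n,\theta)$ is the announced ascending HNN-extension (this is Levitt's picture for $n=1$, cf.\ \cite{Lev}). In the lineal case, the pointwise stabilizer $K$ of $L$ is normal in $G$ and commensurable to the vertex groups, and cocompactness forces $K\cong\Z^n$; the quotient $G/K$ embeds cocompactly in $\textnormal{Isom}(L)\cong D_\infty$, hence equals $\Z$ or $D_\infty$. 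When $G/K\cong\Z$ one reads off $G\cong\Z^n\rtimes_\phi\Z=\HNN(\Z^n,\phi)$ with $\phi\in\textnormal{GL}_n(\Z)$, again ascending.

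The main obstacle I anticipate is the remaining lineal subcase $G/K\cong D_\infty$, the orientation-reversing situation, where $G$ is an extension of $D_\infty$ by $K\cong\Z^n$. Here one must verify by hand that such a $G$ is nonetheless an ascending $\HNN(\Z^n,\theta)$ — choosing the stable letter to be a ``reflection'' and the base to be the graph of the relevant involution, exactly as for the Klein-bottle group $\langle a,b\mid a^2=b^2\rangle\cong\HNN(\Z,-1)$ — or else collapses to the first alternative. Carrying the exact isomorphism type of $K$ through this bookkeeping, and using torsion-freeness (automatic since all stabilizers are torsion-free) to pin down the finite-index structure, is the delicate step; once it is settled, $(iii)\Rightarrow(i)$ closes the cycle.
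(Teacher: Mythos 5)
Your architecture is the same as the paper's: the identical cycle of implications, with $(ii')\Rightarrow(ii'')$ by quasi-isometric ping-pong (the paper cites Lemma 2.3 of \cite{CorTes} for exactly the Schottky argument you sketch) and $(ii'')\Rightarrow(iii)$ via the elliptic/lineal/focal/general-type classification of actions on trees (the paper cites \cite{PaVa}). The three easy links and your treatment of the focal case (Busemann homomorphism onto $\Z$, kernel equal to the increasing union of vertex stabilizers along a ray toward the fixed end, hence an ascending HNN-extension) are sound; the paper outsources that case to Lemma 17 of \cite{CorICC}.

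The genuine gap is the one you flag yourself and do not close: the lineal sub-case $G/K\cong D_\infty$. You leave it as ``verify by hand \dots or else collapses to the first alternative'', but the second branch is in fact unavailable and the first is not automatic. In that sub-case $G=A*_CB$ with $A,B,C\cong\Z^n$ and $[A:C]=[B:C]=2$; since $A$ and $B$ are abelian, the edge group $C=K$ is \emph{central} in $G$, so the preimage $G_0$ of the translation subgroup of $D_\infty$ is torsion-free abelian of rank $n+1$, i.e.\ $G$ contains $\Z^{n+1}$ with index $2$. A Hirsch-length count then shows $G$ can never contain $\Z^n$ with index at most $2$, so the only way to land in $(iii)$ is to exhibit $G$ as $\HNN(\Z^n,\theta)$ with $\theta$ an automorphism, i.e.\ to produce a surjection $G\to\Z$ whose kernel is free abelian of rank $n$. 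Your Klein-bottle computation does this for $n=1$, but for general $n$ one must construct the right functional (invariant under conjugation by a reflection, taking even values on $a^2$ and $b^2$, and with kernel contained in $G_0$), and that verification is precisely what the paper delegates to Lemma 18 of \cite{CorICC}. Until this sub-case is carried out, $(ii'')\Rightarrow(iii)$ is not proved.
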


\begin{proof} Clearly $(iii)\Rightarrow (i) \Rightarrow (ii)\Rightarrow (ii')$.

$(ii')\Rightarrow (ii'')$ The contrapositive immediately follows from the ``quasi-isometric ping-pong lemma'', see Lemma 2.3 in \cite{CorTes}.

$(ii'')\Rightarrow(iii)$ Assume that $G$ does not contain any pair of hyperbolic elements with disjoint fixed points on the boundary. Then by the main result in \cite{PaVa}, the group $G$ fixes either a vertex in $\tilde{X}$, or a point in $\partial\tilde{X}$, or a pair of points in $\partial\tilde{X}$. In the first case, $G$ coincides with a vertex stabilizer, i.e. $G\simeq\Z^n$. If $G$ fixes exactly one boundary point of $\tilde{X}$, then $G$ is an ascending HNN-extension $G=\HNN(\Z^n,\theta)$, with $\theta$ not onto, by Lemma 17 in \cite{CorICC}. If $G$ fixes a pair of boundary points of $\tilde{X}$, then by Lemma 18 in \cite{CorICC} $G$ is either an amalgamated product with indices 2 and 2 and admits $\Z^n$ as subgroup of index 2, or is an HNN-extension $G=\HNN(\Z^n,\theta)$, with $\theta$ an isomorphism, which is a particular case of an ascending HNN extension.
\end{proof}

\subsection{Construction of the homomorphism $\mu$}

Our aim is to construct a non-trivial homomorphism $\mu:G\rightarrow\R^n\rtimes \textnormal{GL}_n(\R)$, mapping $H_d$ to $\textnormal{GL}_n(\R)$ and all vertex groups to $\R^n$.

We fix a maximal tree $T$ in $X$, an orientation $A$, and a base-vertex $v_0\in V$. For $v\in V$, we denote by $[v_0,v]$ the set of edges in $A\cap T$ separating $v$ from $v_0$ in $T$. For $v\in V$ and $e\in A$, we define

$$\varepsilon_{v}(e)= \left\{
\begin{array}{ll}
0 & \mbox{if $e\notin [v_0,v]$;}\\
+1 & \mbox{if $e\in [v_0,v]$ and $e$ points away from $v_0$;} \\
-1 & \mbox{if $e\in [v_0,v]$ and $e$ points towards $v_0$.}
\end{array}\right. $$

For $v\in V$, we define a matrix $\tau_v\in M_n(\Q)$ by the formula
$\tau_v:=\prod_{e\in[v_0,v]}^\sim(\sigma_{\overline{e}}\circ\sigma_e^{-1})^{\varepsilon_v(e)},$ where $\prod^\sim$ denotes the ordered product: edges on $[v_0,v]$ are ordered as they appear when running from $v_0$ to $v$ in $T$. It is then easy to see that, for every edge $e\in A\cap T$:
$\tau_{e_-}\circ\sigma_{\overline{e}}=\tau_{e_+}\circ\sigma_e$.  We then define $\mu|_{G_v}=\tau_v$, so that the relation $\sigma_e(g_e)=\sigma_{\overline{e}}(g_e)$ is satisfied for $e\in A\cap T$ and $g_e\in G_e$.

For $e\in A\smallsetminus T$, we set $\mu(t_e)=\tau_{e_-}\sigma_{\overline{e}}\sigma_e^{-1}\tau_{e_+}^{-1}\in \textnormal{GL}_n(\R)$, so that  $\mu(t_e)(\mu(\sigma_e(g_e)))=\mu(\sigma_{\overline{e}}(g_e))$ for $g_e\in G_e$, i.e.\ the relation $t_e\sigma_e(g_e)t_e^{-1}=\sigma_{\overline{e}}(g_e)$ is satisfied. So we get a homomorphism $\mu:G\rightarrow\R^n\rtimes \textnormal{GL}_n(\R)$ satisfying:

\begin{itemize}
\item $\mu(G_v)$ is a lattice in $\R^n$ for every $v\in V$;
\item $\mu(t_e)\in \textnormal{GL}_n(\R)$ for $e\in A\smallsetminus T$.
\end{itemize}

This homomorphism $\mu$ clearly factors through a homomorphism $\nu: G\rightarrow\R^n\rtimes_\mu H_d$. 

\begin{Lem}\label{Kermufree} $\Ker\,\mu$ is a free subgroup of $G$.
\end{Lem}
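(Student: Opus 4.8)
The plan is to prove that $\Ker\,\mu$ acts freely on the Bass-Serre tree $\tilde{X}$, and then to invoke the fundamental theorem of Bass-Serre theory (\cite{Se}) that a group acting freely and without inversion on a tree is free. Since $G$ acts without inversion on $\tilde{X}$, so does every subgroup; hence it suffices to check that no nontrivial element of $\Ker\,\mu$ fixes a vertex of $\tilde{X}$.

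Recall that the vertices of $\tilde{X}$ are the cosets $gG_v$ (with $v\in V$, $g\in G$), and that the stabilizer in $G$ of the vertex $gG_v$ is the conjugate $gG_vg^{-1}$. Thus any element of $\Ker\,\mu$ fixing a vertex lies in some intersection $\Ker\,\mu\cap gG_vg^{-1}$, and the whole problem reduces to showing that each such intersection is trivial.

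To compute the intersection, I would write a typical element as $gag^{-1}$ with $a\in G_v$, and set $\mu(g)=(b,M)\in\R^n\rtimes\textnormal{GL}_n(\R)$. Since $a\in G_v$ and $\mu(G_v)\subset\R^n$, the image $\mu(a)=(w,\Id)$ is a pure translation, and $w=0$ if and only if $a=1$: indeed $\mu$ restricted to $G_v$ is injective, because its image is a lattice in $\R^n$ (as recorded in the construction of $\mu$). A direct computation in $\R^n\rtimes\textnormal{GL}_n(\R)$, using $(b,M)^{-1}=(-M^{-1}b,M^{-1})$, gives $\mu(gag^{-1})=(b,M)(w,\Id)(b,M)^{-1}=(Mw,\Id)$. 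As $M$ is invertible, $Mw=0$ forces $w=0$, hence $a=1$. Therefore $\Ker\,\mu\cap gG_vg^{-1}=\{1\}$ for every $v$ and every $g$.

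Combining these observations, $\Ker\,\mu$ acts freely and without inversion on the tree $\tilde{X}$, and is therefore free, as claimed. I do not expect a genuine obstacle in this argument; the only point requiring care is the conjugation computation, together with the observation that it is precisely the injectivity of $\mu$ on each vertex group — guaranteed by the lattice property built into the construction of $\mu$ — that prevents a nontrivial vertex-fixing element from lying in $\Ker\,\mu$.
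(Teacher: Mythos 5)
Your proof is correct and follows essentially the same route as the paper: the paper's one-line argument is that $\Ker\,\mu$ meets every vertex group trivially (hence, by normality of $\Ker\,\mu$, every vertex stabilizer $gG_vg^{-1}$ trivially), so it acts freely and without inversion on $\tilde{X}$ and is therefore free. Your explicit conjugation computation $\mu(gag^{-1})=(Mw,\Id)$ just spells out the detail the paper leaves implicit.
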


\begin{proof} By construction, $\Ker\;\mu$ intersects trivially every vertex group $G_v$, so it acts freely, in an orientation-preserving way, on the Bass-Serre tree $\tilde{X}$.
\end{proof}

\begin{Prop}\label{virtsol} The following are equivalent:
\begin{enumerate}
\item[i)] $\mu(H_d)$ is virtually solvable;
\item[ii)] $G$ is free-by-amenable.
\end{enumerate}
\end{Prop}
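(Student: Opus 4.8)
The plan is to prove the two implications separately, using throughout the reformulation of (i) provided by the Tits alternative: $\mu(H_d)$ is a finitely generated subgroup of $\textnormal{GL}_n(\R)$ (in fact of $\textnormal{GL}_n(\Q)$, since each $\sigma_e$ is integral and each $\tau_v$ is rational, so that every $\mu(t_e)$ has rational entries), hence $\mu(H_d)$ is virtually solvable if and only if it contains no copy of $\F_2$, if and only if $\overline{\mu(H_d)}$ is amenable. I shall switch freely between these formulations.

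For $(i)\Rightarrow(ii)$ the argument is direct. If $\mu(H_d)$ is virtually solvable, then both $\R^n$ and $\mu(H_d)$ are amenable, so $\R^n\rtimes\mu(H_d)$ is amenable; hence its subgroup $\mu(G)=\mu(N)\rtimes\mu(H_d)$ is amenable. Since $\mu(G)\cong G/\Ker\mu$ and $\Ker\mu$ is free by Lemma \ref{Kermufree}, the group $G$ is free-by-amenable.

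For $(ii)\Rightarrow(i)$ I argue by contraposition and assume $\overline{\mu(H_d)}$ non-amenable. The first step is to record that a free-by-amenable group has the Haagerup Property: a free group acts properly on a tree, hence is a-T-menable, and a-T-menability is inherited by extensions with amenable quotient (see \cite{CCJJV}). It therefore suffices to show that non-amenability of $\overline{\mu(H_d)}$ forces $G$ to fail the Haagerup Property; concretely, I will exhibit an \emph{infinite} subset $\Sigma$ of $G$ such that the pair $(G,\Sigma)$ has relative Property (T), which, by the remark following the definition of relative Property (T), is incompatible with $G$ being a-T-menable. The natural candidate for $\Sigma$ is the base vertex group $G_{v_0}\cong\Z^n$, which $\mu$ maps isomorphically onto the lattice $\mu(G_{v_0})=\Z^n\subset\R^n$; non-amenability of $\overline{\mu(H_d)}$ is detected on this linear picture, via Proposition \ref{relT} and Lemma \ref{suffrel(T)}, through the absence of an invariant probability measure on a suitable projective space, which yields relative Property (T) for the linear pair.

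The delicate point—and the main obstacle—is that relative Property (T) for $(G,\Sigma)$ cannot simply be transported from the pair $(\mu(G),\mu(\Sigma))$ sitting inside $\R^n\rtimes\mu(H_d)$: the homomorphism $\mu$ has infinite (free) kernel, and relative Property (T) does not pull back along quotient maps with non-compact kernel, nor does it restrict to subgroups in general. Thus the relative Property (T) must be produced \emph{intrinsically} in $G$, using the $G$-action on the Bass--Serre tree $\tilde X$ together with the affine action encoded by $\mu$ (the vertex group $G_{v_0}$ being a vertex stabilizer), rather than by restriction or pull-back. Here the rationality $\mu(H_d)\subset\textnormal{GL}_n(\Q)$ is exactly what guarantees that the invariant subspace carrying the relative Property (T) meets the lattice $\mu(G_{v_0})$ in an infinite set, so that $\Sigma$ is genuinely infinite. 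Once a pair $(G,\Sigma)$ with $\Sigma$ infinite and relative Property (T) is in hand, the contradiction with a-T-menability closes the contraposition.
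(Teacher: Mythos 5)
Your direction $(i)\Rightarrow(ii)$ is correct and is essentially the paper's argument (amenability of $\R^n\rtimes\mu(H_d)$ plus Lemma \ref{Kermufree}). The converse, however, rests on a false equivalence stated in your opening paragraph: for a finitely generated subgroup of $\textnormal{GL}_n(\Q)$, the Tits alternative does give ``virtually solvable or contains $\F_2$'', but \emph{containing $\F_2$ does not imply that the closure in $\textnormal{GL}_n(\R)$ is non-amenable}. A finitely generated free subgroup of $\textnormal{SO}(3,\Q)$ (such groups exist, e.g.\ the Banach--Tarski generators) has compact, hence amenable, closure while being as far from virtually solvable as possible. So condition (i) of the proposition is strictly stronger than ``$\overline{\mu(H_d)}$ is non-amenable fails'', and the paper is careful to keep the two apart: Theorem \ref{Mainthm} characterizes the Haagerup Property by amenability of the \emph{analytic} closure $\overline{\mu(H_d)}$, whereas Proposition \ref{virtsol} characterizes free-by-amenability by virtual solvability of the abstract group $\mu(H_d)$; the introduction explicitly points out that these give genuinely different dividing lines.

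This is fatal for your contrapositive. In the case where $\mu(H_d)$ is not virtually solvable but $\overline{\mu(H_d)}$ is amenable (e.g.\ relatively compact), $G$ \emph{does} have the Haagerup Property by Theorem \ref{Fundamental}, so it admits no infinite subset with relative Property (T), and your entire strategy — producing such a subset inside $G_{v_0}$ — cannot possibly succeed; yet the proposition asserts that $G$ is still not free-by-amenable. The obstruction in this regime is not a rigidity phenomenon at all. The paper's proof instead passes to the \emph{Zariski} closure $L$ of $\mu(H_d)$ (which is never virtually solvable when $\mu(H_d)$ is not, irrespective of the analytic closure), takes an arbitrary normal subgroup $N$ with $G/N$ amenable, shows the Zariski closure $Q$ of $\rho(N)$ is not virtually solvable, and then exhibits inside $N$ a free abelian group of rank $\geq 2$, namely (a finite-index subgroup of) $G_{v_0}\cap\mu^{-1}(S_\Q\cap\Z^n)$ where $S_\Q$ is the span of the coinvariant directions $sw-w$, $s\in Q$; hence $N$ is not free. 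Separately, even in the sub-case you do address ($\overline{\mu(H_d)}$ non-amenable), the key step — relative Property (T) for a pair $(G,\Sigma)$ with $\Sigma$ infinite, produced ``intrinsically'' in $G$ — is announced as the main obstacle but never actually carried out; the clean route there would have been to quote Theorem \ref{Fundamental} (via the lattice embedding of Proposition \ref{Glattice}) to conclude that $G$ is not Haagerup, but even that only covers part of the negation of (i).
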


\begin{proof} The implication $(i)\Rightarrow (ii)$ is clear from lemma \ref{Kermufree}. For the converse, assume that $\mu(H_d)$ is not virtually solvable. We are going to show that, if $N\triangleleft G$ is a normal subgroup such that $G/N$ is amenable, then $N$ is {\it not} free. 

Denote by $\rho$ the composite of $\mu:H_d\rightarrow \textnormal{GL}_n(\Q)$ with the quotient map $\varphi:G\twoheadrightarrow H_d$. Let $L$ be the Zariski closure of $\mu(H_d)=\rho(G)$ in $\textnormal{GL}_n(\Q)$, 
and let $Q$ be the Zariski closure of $\rho(N)$. Then $Q$ is normal in $L$, and $L/Q$ admits a Zariski dense embedding of the amenable group $G/N$, so $L/Q$ is virtually solvable. Since, because of the assumption, $L$ is not virtually solvable, we deduce that $Q$ is not virtually solvable.

Let $S_\Q$ be the subspace of $\Q^n$ generated by the vectors $sw-w$ for $w\in\Q^n$ and $s\in Q$ (so that $\Q^n/S_\Q$ is the space of co-invariants of $Q$ in $\Q^n$). Observe that $\dim S_\Q\geq 2$ (otherwise, using a block decomposition, one sees that $Q$ is metabelian, hence virtually solvable). Let $v_0\in V$ be the base vertex used in the definition of the homomorphism $\mu$, so that $\mu(G_{v_0})=\Z^n$. Set $S=G_{v_0}\cap \mu^{-1}(S_\Q\cap\Z^n)$. We claim that $S$ is virtually contained in $N$: so $N$ contains a free abelian group of rank $\geq 2$, hence $N$ cannot be free.

To prove the claim: by Zariski density of $\rho(N)$ in $Q$, we can write every vector $w\in S_\Q$ as a finite sum $w=\sum_i \rho(n_i)w_i-w_i$, where $w_i\in\Q^n$ and $n_i\in N$. Using the fact that $\rho(G)$ commensurates $\Z^n$, this means that for every $w\in S$ there is a positive integer $k$ such that $w^k$ is a finite product of commutators: $w^k=\prod_i[n_i,w_i^k]$ with $w_i\in G_{v_0}$; so $w^k\in N$, hence $S$ is virtually contained in $N$.
\end{proof}

\subsection{Embedding $G$ as a lattice}\label{groupW}

Let $\psi:\R^n\rtimes_\mu H_d\rightarrow H_d$ be the quotient map. As in the previous section, let $\overline{\iota(G)}$ be the closure of the image of $G$ in $\textnormal{Aut}(\tilde{X})$, and $\bar{\varphi}:\overline{\iota(G)}\rightarrow H_d$ the homomorphism given by Lemma \ref{phiextends}. We define a closed subgroup $W$ of the product $\overline{\iota(G)}\times (\R^n\rtimes_\mu H_d)$ by:
$$W=\{(g,h)\in \overline{\iota(G)}\times (\R^n\rtimes_\mu H_d): \bar{\varphi}(g)=\psi(h)\};$$
i.e., $W$ is the fiber product of $\bar{\varphi}$ and $\psi$. As $\bar{\varphi}\circ\iota=\psi\circ\nu=\varphi$, we see that the natural continuous homomorphism $\iota\times\nu$ maps $G$ into $W$. The next lemma shows in particular that $G$ can be identified to a discrete subgroup in the product of the totally disconnected group $\textnormal{Aut}(\tilde{X})$ and the non-connected Lie group $\R^n\rtimes_\mu H_d$.

\begin{Prop}\label{Glattice} The homomorphism
\[\iota\times\nu\; :\; G\;\to\;\Aut(\tilde{X})\times (\R^n\rtimes_\mu H_d)\] is injective and its image is a cocompact lattice in $W$. 
\end{Prop}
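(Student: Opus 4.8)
The plan is to prove the three assertions in turn: injectivity of $\iota\times\nu$, discreteness (equivalently closedness) of its image in $W$, and cocompactness. The first two follow quickly from results already at hand, and the real work lies in cocompactness, which I would treat geometrically via an auxiliary action.

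For injectivity, note that $\Ker(\iota\times\nu)=\Ker\iota\cap\Ker\nu$ and that $\Ker\nu=\Ker\mu$. Every element of $\Ker\iota$ fixes the base vertex $\tilde v_0=G_{v_0}$ of $\tilde X$, so $\Ker\iota\subseteq G_{v_0}$; since $\mu$ restricted to $G_{v_0}$ is injective (its image is a lattice $\cong\Z^n$), the intersection $\Ker\iota\cap\Ker\mu\subseteq G_{v_0}\cap\Ker\mu$ is trivial. (Equivalently, $\Ker\mu$ acts freely on $\tilde X$ by Lemma \ref{Kermufree}, while $\Ker\iota$ acts trivially, forcing a trivial intersection.) That $\iota\times\nu$ maps $G$ into $W$ is the already recorded identity $\bar\varphi\circ\iota=\psi\circ\nu=\varphi$. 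Write $\Phi=\iota\times\nu\colon G\to W$. For discreteness I would show $\Phi$ is proper. Let $j\colon\R^n\rtimes_\mu H_d\to\R^n\rtimes\mu(H_d)$ be the natural continuous homomorphism (the identity on $\R^n$, induced by $\mu$ on $H_d$), and let $p\colon W\to\overline{\iota(G)}\times(\R^n\rtimes\mu(H_d))$ be $(g,h)\mapsto(g,j(h))$. Then $p\circ\Phi$ is exactly the map $g\mapsto(\iota(g),\mu(g))$, which is proper by Proposition \ref{proper}. Hence for compact $C\subseteq W$ one has $\Phi^{-1}(C)\subseteq(p\circ\Phi)^{-1}(p(C))$, a finite set; since $G$ is discrete and $W$ locally compact, the image $\Phi(G)$ is discrete and closed in $W$.

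The main step is cocompactness, which I would obtain by letting $W$ act on $Y=\tilde V\times\R^n$ by $(g,h)\cdot(x,v)=(gx,\,h\cdot v)$, where $h$ acts affinely on $\R^n$ through $\R^n\rtimes_\mu H_d$. The key point is that this action is proper. Indeed, for $(g,h)\in W$ we have $\psi(h)=\bar\varphi(g)$; as the stabilizer of a vertex in $\overline{\iota(G)}$ is compact and $H_d$ is torsion-free, $\bar\varphi$ annihilates vertex stabilizers, so constraining the tree coordinate forces $\psi(h)$ into a finite subset of $H_d$, and then constraining the $\R^n$ coordinate confines the translation part of $h$ to a compact set. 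Concretely, for $y_0=(x_0,v_0)$ and any compact $C'\subseteq Y$, the set $\{w\in W:\,w\cdot y_0\in C'\}$ is compact. On the other hand $\Phi(G)$ acts on $Y$ with compact quotient: $G$ has finitely many orbits on $\tilde V$ (since $G\backslash\tilde X=X$ is finite), and for a representative vertex $x_i$ its stabilizer $G_{x_i}$ lies in $\Ker\varphi$ (being a conjugate of a vertex group), hence acts on $\{x_i\}\times\R^n$ through the lattice $\mu(G_{x_i})\subseteq\R^n$; thus $\Phi(G)\backslash Y$ is a finite union of compact tori. Finally I invoke the standard fact that a closed subgroup acting cocompactly on a space on which the ambient group acts properly is itself cocompact: choosing compact $C'$ with $\Phi(G)\cdot C'=Y$, every $w\in W$ satisfies $\gamma^{-1}w\in\{u\in W:\,u\cdot y_0\in C'\}$ for a suitable $\gamma\in\Phi(G)$, and the latter set is compact, so $\Phi(G)\backslash W$ is compact.

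Combining discreteness with cocompactness shows that $\Phi(G)$ is a cocompact lattice in $W$, as required. The hard part is the properness of the auxiliary action on $\tilde V\times\R^n$: the decisive observation is that $\bar\varphi$ kills the (compact) vertex stabilizers of $\overline{\iota(G)}$ because $H_d$ is free, which synchronizes the $H_d$-component of $\R^n\rtimes_\mu H_d$ with position in the tree. This is precisely what makes the combined action proper, even though $\R^n\rtimes_\mu H_d$ acting on $\R^n$ alone has non-compact point stabilizers; everything else is bookkeeping with orbits and lattices.
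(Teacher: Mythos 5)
Your injectivity and cocompactness arguments are both sound, but the discreteness step as written is circular: you invoke Proposition \ref{proper} to get properness of $g\mapsto(\iota(g),\mu(g))$, yet in the paper that proposition is only stated in the introduction and is itself deduced from Proposition \ref{Glattice} --- it has no independent proof. So your closedness of $\Phi(G)$ in $W$ rests on the statement being proved. The repair is cheap and you already hold the ingredients: the properness of the $W$-action on $Y=\tilde{V}\times\R^n$ that you establish for cocompactness restricts to $G$ and yields $\Phi^{-1}(C)\subseteq\{g\in G:\Phi(g)\cdot y_0\in C\cdot y_0\}$, a finite set for compact $C$ (on each nonempty coset $\{g\in G: gx_0=x_i\}=g_iG_{v_0}$ the affine image of $v_0$ moves by the lattice $\mu(G_{v_0})$, so only finitely many elements land in a bounded region). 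This is in substance what the paper does: it reduces metric properness of the $G$-action on $\tilde{X}\times(\R^n\rtimes_\mu H_d)$ to properness of the vertex stabilizer $G_v$ acting on $\R^n\rtimes_\mu H_d$ (Lemma \ref{properness}), which holds because $\mu(G_v)$ is a lattice in $\R^n$.

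On the remaining parts: your cocompactness proof via the proper, cocompact $W$-action on $\tilde{V}\times\R^n$ is a correct repackaging of the paper's direct computation. The paper exhibits the compact set $K=\overline{\iota(G)}_{\tilde{v}}\times(D\times\{1\})$ with $W=G\cdot K$, using exactly the two facts you isolate: $\bar\varphi$ kills the compact vertex stabilizers of $\overline{\iota(G)}$ (so bringing the tree coordinate back to $\tilde{v}$ forces the $H_d$-coordinate to $1$), and $D$ is a compact fundamental domain for the lattice $\mu(G_v)$ in $\R^n$. Your version is slightly more abstract but buys nothing extra here; either is fine. The injectivity argument coincides with the paper's (the kernel of $\iota$ lies in the vertex and edge groups, on which $\nu$ is injective); only note that $\Ker\nu\subseteq\Ker\mu$ is all you need and all that is true in general, since the inclusion is strict whenever $\mu|_{H_d}$ fails to be injective.
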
 

We will need the following general lemma.

\begin{Lem}\label{properness} Let $\Gamma$ be a discrete group, acting isometrically on a connected locally finite graph $X$ and a metric space $Y$. The $\Gamma$-action on $X\times Y$ is metrically proper if and only if, for some vertex $x\in X$, the stabilizer $\Gamma_x$ acts metrically properly on $Y$.
\end{Lem}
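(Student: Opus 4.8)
The plan is to prove both implications directly from the definition of a metrically proper action, exploiting local finiteness of $X$ to reduce the study of the full action on $X\times Y$ to the study of a single stabilizer acting on $Y$. Recall that an isometric action of a discrete group $\Gamma$ on a metric space $Z$ is metrically proper if for every bounded set $B\subseteq Z$, the set $\{\gamma\in\Gamma:\gamma B\cap B\neq\emptyset\}$ is finite; equivalently, for every $z\in Z$ and every $R>0$, only finitely many $\gamma$ satisfy $d(\gamma z,z)\le R$. Since we are on a product, I will use the natural product (say $\ell^1$ or max) metric on $X\times Y$; as these are bi-Lipschitz equivalent, the choice does not affect properness.

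The easy direction is ($\Rightarrow$): if $\Gamma$ acts metrically properly on $X\times Y$, then for a fixed vertex $x$ and fixed $y\in Y$, any $\gamma\in\Gamma_x$ fixes $x$, so $d_{X\times Y}(\gamma(x,y),(x,y))=d_Y(\gamma y,y)$. Hence a ball of radius $R$ for the $\Gamma_x$-action on $Y$ around $y$ injects into a ball of radius $R$ for the $\Gamma$-action on $X\times Y$ around $(x,y)$, and properness on the product forces properness of $\Gamma_x$ on $Y$.

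\textbf{The main point is the converse} ($\Leftarrow$). Suppose $\Gamma_x$ acts metrically properly on $Y$ for some vertex $x$; I want to deduce properness of the full $\Gamma$-action on $X\times Y$. Fix a basepoint $(x,y)\in X\times Y$ and a radius $R$, and consider $F=\{\gamma\in\Gamma:d_{X\times Y}(\gamma(x,y),(x,y))\le R\}$. Any such $\gamma$ moves $x$ a bounded distance in $X$; since $X$ is a connected locally finite graph, the closed $R$-ball in $X$ about $x$ is a finite vertex set, so $\gamma x$ ranges over finitely many vertices. I would partition $F$ according to the value $\gamma x=x'$. For each such $x'$ the set of $\gamma$ with $\gamma x=x'$ is either empty or a single coset $\gamma_0\Gamma_x$ (choosing any $\gamma_0$ sending $x$ to $x'$). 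On that coset, writing $\gamma=\gamma_0\eta$ with $\eta\in\Gamma_x$, the $Y$-coordinate condition $d_Y(\gamma y,y)\le R$ becomes $d_Y(\eta y,\gamma_0^{-1}y)\le R$, i.e. $\eta$ moves the point $y$ a bounded distance toward the fixed point $\gamma_0^{-1}y$; by the triangle inequality $d_Y(\eta y,y)\le R+d_Y(\gamma_0^{-1}y,y)$, a bound depending only on $x'$ and $R$. Metric properness of $\Gamma_x$ on $Y$ then makes the set of admissible $\eta$ finite, hence the coset contributes finitely many elements to $F$. As there are finitely many values $x'$, $F$ is finite, proving properness. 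The only genuine subtlety to handle carefully is verifying that each fiber $\{\gamma:\gamma x=x'\}$ is a single $\Gamma_x$-coset and that the displacement bound on $\eta$ is uniform over the finite index set; both are routine once local finiteness is invoked to guarantee the finiteness of the set of candidate images $x'$.
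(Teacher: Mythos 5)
Your proof is correct and uses essentially the same mechanism as the paper's: local finiteness confines $\gamma x$ to finitely many vertices, and within each fiber the problem reduces to the stabilizer $\Gamma_x$ acting on $Y$. The only difference is presentational — the paper argues the contrapositive by extracting a subsequence with $g_nx$ constant, whereas you argue directly by partitioning the bounded-displacement set into finitely many $\Gamma_x$-cosets; both are sound.
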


\begin{proof} The direct implication is trivial. For the converse, suppose that the $\Gamma$-action on $X\times Y$ is not metrically proper. So there exists infinitely many elements $g_n\;(n\geq 0)$ in $\Gamma$ such that, for some $(x,y)\in X\times Y$, the set $\{(g_nx,g_ny):n\geq 0\}$ is bounded. As $X$ is locally finite, passing to a subsequence we may assume that the sequence $(g_nx)_{n\geq 0}$ is constant. Set then $h_n:=g_0^{-1}g_n$, so that $h_n\in\Gamma_x$, and the sequence $(h_ny)_{n\geq 0}$ is bounded in $Y$, so that $\Gamma_x$ does not act metrically properly on $Y$.
\end{proof}

\begin{proof}[Proof of Proposition \ref{Glattice}]
 Since the kernel of $\iota$ is contained in all edge groups and $\nu$ is injective on edge groups, we see that $\iota\times\nu$ is injective. Since $\R^n\rtimes_\mu H_d$ is compactly generated, we may equip it with the word length associated with some compact generating set. This way we view $G$, via the diagonal action, as a group of isometries of $\tilde{X}\times (\R^n\rtimes_\mu H_d)$, and to prove discreteness of $G$ in $W$, it is enough to show that $G$ acts metrically properly on  $\tilde{X}\times (\R^n\rtimes_\mu H_d)$. By Lemma \ref{properness}, it is enough to show that, for some $v\in V$, the group $G_v$ acts metrically properly on  $\R^n\rtimes_\mu H_d$. But $\mu(G_v)$ is a lattice in $\R^n$, so the latter fact is obvious.

To show cocompactness of $G$ in $W$, let $v$ be a vertex in $X$, let $\tilde{v}$ be the vertex of $\tilde{X}$ whose stabilizer in $G$ is $G_v$, and let $\overline{\iota(G)}_{\tilde{v}}$ be the stabilizer of $\tilde{v}$ in $\overline{\iota(G)}$; so $\overline{\iota(G)}_{\tilde{v}}$ is a compact subgroup of $\overline{\iota(G)}$. Let $D$ be a compact subset of $\R^n$ which is a fundamental domain for $\mu(G_v)$ in $\R^n$. Consider the compact subset $K:=\overline{\iota(G)}_{\tilde{v}}\times(D\times\{1\})$ in $\textnormal{Aut}(\tilde{X})\times(\R^n\rtimes_\mu H_d)$. We claim that $W=G.K$. To see this, take $(h,y)\in W$. There exists $g_1\in G$ such that $h(\tilde{v})=g_1(\tilde{v})$, so $g_1^{-1}(h,y)\in \overline{\iota(G)}_{\tilde{v}}\times(\R^n\times\{1\})$. So we can find $g_2\in G_v$ such that $g_1^{-1}(h,y)\in \overline{\iota(G)}_{\tilde{v}}\times((\mu(g_2)+D)\times\{1\})=\overline{\iota(G)}_{\tilde{v}}\times\mu(g_2)(D\times\{1\})=g_2K$. So $W\subset G.K$. The converse inclusion is clear.
\end{proof}

\section{Proof of Theorem \ref{Mainthm}}

\subsection{The ``Haagerup'' part}

\begin{Thm}\label{Fundamental} Let $G$ be the Bass-Serre fundamental group of a finite graph of $\Z^n$'s. The following are equivalent:
\begin{enumerate}
\item[i)] $G$ has the Haagerup Property;
\item[ii)] $\R^n\rtimes_\mu H_d$ has the Haagerup Property;
\item[iii)] The closure $\overline{\mu(H_d)}$ of $\mu(H_d)$ in $\textnormal{GL}_n(\R)$, is amenable.
\end{enumerate}
\end{Thm}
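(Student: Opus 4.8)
The plan is to split the cycle into the two equivalences (ii)$\Leftrightarrow$(iii) and (i)$\Leftrightarrow$(ii). The first is immediate: since $H_d$ is free it has the Haagerup Property, so the unnumbered Proposition of the introduction applies to the pair $(H_d,\mu|_{H_d})$ with $\Gamma=H_d$ and $\rho=\mu|_{H_d}$; it asserts in particular that $\R^n\rtimes_\mu H_d$ is Haagerup if and only if $\overline{\mu(H_d)}$ is amenable, which is exactly (ii)$\Leftrightarrow$(iii). It then remains to prove (i)$\Leftrightarrow$(ii), for which I would use the realization of $G$ as a cocompact lattice in $W$ from Proposition \ref{Glattice}.

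For (ii)$\Rightarrow$(i): by Proposition \ref{Glattice}, $G$ is a cocompact lattice in the closed subgroup $W$ of $\overline{\iota(G)}\times(\R^n\rtimes_\mu H_d)$, and since the Haagerup Property is shared by a locally compact second countable group and its lattices, it is enough to show that $W$ is Haagerup. Being a closed subgroup of the product, $W$ inherits the Haagerup Property as soon as both factors have it: the factor $\R^n\rtimes_\mu H_d$ does by hypothesis (ii), and the factor $\overline{\iota(G)}$ is a closed subgroup of $\Aut(\tilde X)$, which acts properly on the locally finite tree $\tilde X$ — local finiteness holding because each $\sigma_e$ has nonzero determinant and hence finite-index image — so it is Haagerup, groups acting properly on trees being Haagerup.

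The substantial direction is (i)$\Rightarrow$(ii), which I would prove by contraposition via relative Property (T). If $\R^n\rtimes_\mu H_d$ is not Haagerup, then by the equivalence (ii)$\Leftrightarrow$(iii) already obtained, $\overline{\mu(H_d)}$ is non-amenable, so condition (ii') of Proposition \ref{relT}, applied to $(H_d,\mu|_{H_d})$, provides a non-zero $H_d$-invariant subspace $V\subseteq\R^n$ with $(V\rtimes H_d,V)$ having relative Property (T). I would then transplant this pair into $W$: inside $W$ take the subgroup $S$ generated by $\{1\}\times V$ (which lies in $W$ because $V\subseteq\Ker\psi$) and by $(\iota\times\nu)(H_d)$. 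Because $V$ is $\mu(H_d)$-invariant, $S$ is a copy of a semidirect product $V\rtimes H_d$, and the second projection $p_2$ restricts to an \emph{isomorphism} of $S$ onto $V\rtimes H_d\subseteq\R^n\rtimes_\mu H_d$ sending $\{1\}\times V$ to $V$; injectivity of $p_2|_S$ follows from injectivity of $\iota|_{H_d}$ (the stable letters act as hyperbolic tree automorphisms). Transporting relative (T) across this isomorphism shows that $(S,\{1\}\times V)$ has relative Property (T), whence so does $(W,\{1\}\times V)$, since relative (T) with respect to a fixed subset passes from a closed subgroup to the ambient group. As $\{1\}\times V$ is non-compact, $W$ is not Haagerup, and therefore neither is its cocompact lattice $G$.

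The main obstacle is precisely this transfer of relative Property (T) back up to $W$. A crude pullback along $p_2$ does not work: the full preimage $p_2^{-1}(V)$ equals $\Ker\bar\varphi\times V$, which contains the non-compact Haagerup group $\Ker\bar\varphi$, and relative (T) with respect to this larger set is false. The device that circumvents this is the choice of the \emph{section} $S\cong V\rtimes H_d$, on which $p_2$ is an isomorphism, together with the elementary monotonicity that $(S,X)$ relative (T) with $X\subseteq S\subseteq W$ implies $(W,X)$ relative (T). The one computation requiring care is the injectivity of $p_2|_S$, i.e.\ that on $S$ the tree-automorphism coordinate is completely determined by, and trivial along, the $H_d$-coordinate; everything else is formal.
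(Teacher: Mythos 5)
Your proposal is correct, and two of its three pieces ---the reduction of (ii)$\Leftrightarrow$(iii) to Proposition \ref{relT} applied to $(H_d,\mu|_{H_d})$, and the proof of (ii)$\Rightarrow$(i) by exhibiting $G$ as a discrete subgroup of the Haagerup product $\overline{\iota(G)}\times(\R^n\rtimes_\mu H_d)$--- coincide with the paper's proof. Where you genuinely diverge is in (i)$\Rightarrow$(ii). The paper argues directly: $G$ Haagerup implies $W$ Haagerup (Proposition \ref{Glattice} plus Proposition 6.1.5 of \cite{CCJJV}), and then $\R^n\rtimes_\mu H_d$ is Haagerup because it sits inside $W$ as the \emph{closed subgroup} $\{(i(\psi(y)),y)\}$, the graph of $i\circ\psi$ where $i:H_d\to\overline{\iota(G)}$ is the section of $\bar\varphi$ given by the stable letters; no relative Property (T) is needed at this stage, since closed subgroups trivially inherit the Haagerup Property. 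You instead argue by contraposition, pulling a relative (T) pair $(V\rtimes H_d,V)$ out of Proposition \ref{relT} and transplanting it into $W$ along the section $S$. Your transplantation is sound (your $S$ is exactly the restriction of the paper's graph subgroup to $V\rtimes H_d$, your monotonicity of relative (T) from a closed subgroup to the ambient group is standard, and $p_2|_S$ is indeed injective --- in fact for a simpler reason than the one you give: the second coordinate already records $t$, so the first coordinate $\iota(t)$ is redundant). Two remarks. First, be aware that your closing inference ``$W$ is not Haagerup, therefore neither is its cocompact lattice $G$'' is \emph{not} the (false) statement that non-Haagerupness descends to subgroups; it is the contrapositive of the nontrivial lattice-to-ambient-group inheritance, i.e.\ exactly Proposition 6.1.5 of \cite{CCJJV}, which you should cite --- so your route does not avoid that ingredient, it merely relocates it. Second, your detour re-proves by hand the implication (ii$'$)$\Rightarrow$(iv) of Proposition \ref{relT} inside $W$ rather than inside $\R^n\rtimes_\mu H_d$; the paper's version buys a shorter argument by embedding all of $\R^n\rtimes_\mu H_d$ into $W$ at once, while yours buys the (mildly informative) extra conclusion that $W$ itself contains an explicit relative (T) pair when $\overline{\mu(H_d)}$ is non-amenable.
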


\begin{proof}
 $(i)\Rightarrow(ii)$ Denote by $i$ the inclusion of $H_d$ into $\overline{\iota(G)}$, so that $\bar{\varphi}\circ i=\textnormal{Id}_{H_d}$. Observe that $i(H_d)$ is a discrete subgroup of $\overline{\iota(G)}$. The homomorphism $\R^n\rtimes_\mu H_d\rightarrow W:y\mapsto (i(\psi(y)),y)$ identifies $\R^n\rtimes_\mu H_d$ with a closed subgroup of $W$ (namely the graph of the continuous homomorphism $i\circ \psi$). So, if $G$ has the Haagerup Property, then so does $W$ (by Proposition \ref{Glattice} above, and Proposition 6.1.5 in \cite{CCJJV}). So $\R^n\rtimes_\mu H_d$ has the Haagerup Property.

$(ii)\Rightarrow(i)$ Assume $\R^n\rtimes_\mu H_d$ has the Haagerup Property. Since the tree $\tilde{X}$ is locally finite, the group $\textnormal{Aut}(\tilde{X})$ has the Haagerup Property (see e.g.\ section 1.2.3 in \cite{CCJJV}), so the product $\textnormal{Aut}(\tilde{X})\times(\R^n\rtimes_\mu\F_d)$ is Haagerup. By Proposition \ref{Glattice}, the group $G$ is a discrete subgroup in this product.

The equivalence $(ii)\Leftrightarrow(iii)$ follows from $(i)\Rightarrow(iv)$ in Proposition \ref{relT}, taking into account the fact that $H_d$ has the Haagerup Property. 
\end{proof} 

\subsection{The ``weak amenability'' part}

\begin{Thm} Let $G$ be the Bass-Serre fundamental group of a finite graph of $\Z^n$'s, as in the previous section. The following are equivalent:
\begin{enumerate}
\item[i)] $G$ is weakly amenable;
\item[ii)] $\Lambda(G)=1$; 
\item[iii)] $\overline{\mu(H_d)}$ is amenable;
\item[iv)] $\Lambda(\R^n\rtimes_\mu H_d)=1$;
\end{enumerate}
\end{Thm}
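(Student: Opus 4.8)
The plan is to prove the four equivalences for weak amenability by closely paralleling the structure of the Haagerup-property theorem (Theorem \ref{Fundamental}), since the two arguments share the same geometric backbone provided by Proposition \ref{Glattice} (realizing $G$ as a cocompact lattice in $W$) and Proposition \ref{relT} (the $\R^n\rtimes\Gamma$ dichotomy). The key facts I would invoke throughout are: weak amenability with constant $1$ passes to closed subgroups, is inherited by lattices, and is preserved under direct products with the constants multiplying (so that $\Lambda(A\times B)=\Lambda(A)\Lambda(B)$). I would also use that $\textnormal{Aut}(\tilde{X})$ is weakly amenable with $\Lambda=1$ (being the automorphism group of a locally finite tree, by the results of the tree-lattice literature), and that the free group $H_d$ is weakly amenable with $\Lambda=1$ (Haagerup's theorem).

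The implications I would organize as a cycle. First, $(iii)\Rightarrow(iv)$: assuming $\overline{\mu(H_d)}$ is amenable, apply the equivalence $(v)\Rightarrow(i)$ (contrapositive) in Proposition \ref{relT} with $\Gamma=H_d$ and $\K=\R$, using that $H_d$ is weakly amenable; this gives that $\R^n\rtimes_\mu H_d$ is weakly amenable, and one must additionally check that the constant is exactly $1$. Here the cleanest route is the explicit embedding $\R^n\rtimes_\mu H_d\hookrightarrow \Gamma\times H$ with $\Gamma=H_d$ and $H=\R^n\rtimes\overline{\mu(H_d)}$ amenable, exactly as in the final paragraph of the proof of Proposition \ref{relT}; since a closed subgroup of a product of groups each with $\Lambda=1$ again has $\Lambda=1$, we obtain $\Lambda(\R^n\rtimes_\mu H_d)=1$. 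Next, $(iv)\Rightarrow(iii)$ (or more economically $(i)\Rightarrow(iii)$): if $\overline{\mu(H_d)}$ were \emph{non}-amenable, then by $(i)\Rightarrow(v)$ in Proposition \ref{relT} the group $\R^n\rtimes_\mu H_d$ would fail to be weakly amenable, and pulling this back through the cocompact-lattice embedding of $G$ would contradict weak amenability of $G$.

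The transfer between $G$ and $\R^n\rtimes_\mu H_d$ is the geometric heart, and it runs in both directions via $W$ exactly as in Theorem \ref{Fundamental}. For $(iii)\Rightarrow(i)$: assuming $\overline{\mu(H_d)}$ amenable, the product $\textnormal{Aut}(\tilde{X})\times(\R^n\rtimes_\mu H_d)$ has $\Lambda=1$ (product of two $\Lambda=1$ factors, the second by the paragraph above), and $G$ sits as a discrete subgroup there by Proposition \ref{Glattice}, so $G$ is weakly amenable with $\Lambda(G)=1$, giving simultaneously $(i)$ and $(ii)$. For the reverse direction feeding into $(i)\Rightarrow(iii)$, one uses the graph embedding $\R^n\rtimes_\mu H_d\hookrightarrow W$ of $y\mapsto(i(\psi(y)),y)$ from the proof of Theorem \ref{Fundamental}, combined with the fact that $G$ is a cocompact lattice in $W$: weak amenability of $G$ yields weak amenability of $W$ (lattice inheritance), hence of its closed subgroup $\R^n\rtimes_\mu H_d$, and then Proposition \ref{relT} forces $\overline{\mu(H_d)}$ amenable. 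The trivial implications $(ii)\Rightarrow(i)$ and the containment $\Lambda(G)\le\Lambda(\R^n\rtimes_\mu H_d)$ close the loop.

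The main obstacle I anticipate is bookkeeping the Cowling--Haagerup \emph{constant} rather than mere weak amenability, since Proposition \ref{relT} as stated only asserts presence or absence of weak amenability, not the value $\Lambda=1$. The cleanest fix is to never let a constant exceed $1$ enter: every step should be phrased through closed-subgroup, lattice, and finite-product stability of the property ``$\Lambda=1$,'' each of which is standard (lattice inheritance of $\Lambda$ is due to Cowling--Haagerup \cite{CH}, and multiplicativity over products is elementary). I would therefore state a short preliminary remark recording that $\Lambda=1$ is stable under these three operations, and then the proof becomes a formal chase through the same diagram as in the Haagerup case, with $\Lambda=1$ propagated at each arrow. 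Establishing $\Lambda(\textnormal{Aut}(\tilde{X}))=1$ explicitly is the one external input that must be cited carefully.
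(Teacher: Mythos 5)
Your proposal is correct and follows essentially the same route as the paper: the cycle $(iii)\Rightarrow(iv)\Rightarrow(ii)\Rightarrow(i)\Rightarrow(iii)$ using the closed embedding of $\R^n\rtimes_\mu H_d$ into $H_d\times(\R^n\rtimes\overline{\mu(H_d)})$, the lattice embedding of $G$ into $W\subset\textnormal{Aut}(\tilde{X})\times(\R^n\rtimes_\mu H_d)$ from Proposition \ref{Glattice}, Proposition \ref{relT} for the final step, and the same stability properties of $\Lambda=1$ (closed subgroups, lattices, products, amenable groups, $\textnormal{Aut}(\tilde{X})$ and $H_d$ via Szwarc). The only cosmetic difference is that the paper cites Szwarc rather than Haagerup for $\Lambda(H_d)=1$, which does not affect the argument.
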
 

\begin{proof} $(iii)\Rightarrow (iv)$ We embed $\R^n\rtimes_\mu H_d$ as a closed subgroup of $ H_d \times (\R^n\rtimes\overline{\mu(H_d)})$ by $x\mapsto (\psi(x),x)$.  By our assumption, the group $\R^n\rtimes\overline{\mu(H_d)}$ is amenable. Now we invoke the following results:
\begin{itemize}
\item If $M$ is a closed subgroup in $L$, then $\Lambda(M)\leq\Lambda(L)$ (Proposition 1.3 in \cite{CH});
\item $\Lambda(G_1\times G_2)=\Lambda(G_1)\Lambda(G_2)$ (Corollary 1.5 in \cite{CH});
\item $\Lambda(G)=1$ if $G$ is amenable \cite{Lep};
\item$\Lambda(H_d)=1$ (recall that $H_d$ is free of rank $d$), see \cite{Szw}.
\end{itemize}
Hence $\Lambda(\R^n\rtimes_\mu H_d)=1$.

$(iv)\Rightarrow(ii)$ By Proposition \ref{Glattice}, we may view $G$ as a discrete subgroup of $\textnormal{Aut}(\tilde{X}) \times (\R^n\rtimes\mu(H_d))$. Since $\Lambda(\textnormal{Aut}(\tilde{X}))=1$ (see \cite{Szw}), we have $\Lambda(G)=1$.

$(ii)\Rightarrow(i)$ is trivial.

$(i)\Rightarrow (iii)$ If $G$ is weakly amenable, then so is $W$, by our Proposition \ref{Glattice} and Proposition 6.2 in \cite{CH}. Since $\R^n\rtimes_\mu H_d$ can be realized as a closed subgroup of $W$ (see the proof of $(i)\Rightarrow(ii)$ in Theorem \ref{Fundamental}), we also have $\Lambda(\R^n\rtimes_\mu H_d)<+\infty$. By $(v)\Rightarrow (i)$ in Proposition \ref{relT}, the closure $\overline{\mu(H_d)}$ is amenable.
\end{proof}

\section{Quasi-isometric embeddings in $L^p$-spaces}

\subsection{Preliminaries}\label{prel}

We keep notations as in subsection \ref{groupW}.  Observe that the group $W$ introduced there is compactly generated, since by Proposition \ref{Glattice} it contains the finitely generated group $G$ as a co-compact lattice.

\begin{Lem}\label{biLip1} The inclusion of $W$ into $\overline{\iota(G)}\times (\R^n\rtimes_\mu H_d)$ is a quasi-isometric embedding.
\end{Lem}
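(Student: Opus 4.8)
The inclusion is a continuous homomorphism between compactly generated locally compact groups, and it carries a compact generating set of $W$ into a compact (hence bounded) subset of $P:=\overline{\iota(G)}\times(\R^n\rtimes_\mu H_d)$, so it is large-scale Lipschitz; only the reverse inequality $|w|_W\lesssim|w|_P$ needs proof. The plan is to reduce this to the orbit map. By Proposition \ref{Glattice}, $G$ sits in $W$ as a cocompact lattice, so $d_W\asymp d_G$ on $G$ and every point of $W$ lies at bounded $W$-distance (hence bounded $P$-distance) from $G$; the statement is therefore equivalent to asserting that $\iota\times\nu\colon G\to P$ is a quasi-isometric embedding, i.e. $|g|_G\lesssim d_{\tilde X}(\tilde v_0,g\tilde v_0)+|\nu(g)|_B$ for $g\in G$, where $B:=\R^n\rtimes_\mu H_d$. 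Here the $\overline{\iota(G)}$-factor contributes $d_{\tilde X}(\tilde v_0,g\tilde v_0)$ because $\overline{\iota(G)}$ acts properly and cocompactly on the locally finite tree $\tilde X$ with compact vertex stabilizers, hence is quasi-isometric to $\tilde X$ through the orbit map.

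For this inequality I would run a Bass--Serre normal-form computation along a geodesic of $\tilde X$. Write $D=d_{\tilde X}(\tilde v_0,g\tilde v_0)$ and $\ell=|\nu(g)|_B$, and expand $g$ as a reduced loop $g=h_0t_{e_1}h_1\cdots t_{e_L}h_L$ read off the geodesic $[\tilde v_0,g\tilde v_0]$, so that $L\asymp D$. Because $\tilde X$ is locally finite --- equivalently, every edge inclusion $\sigma_e$ has finite-index image --- the forward coset representatives $h_0,\dots,h_{L-1}$ may be taken in fixed finite transversals, hence are bounded, and $|g|_G\le L+\sum_{i<L}|h_i|_G+|h_L|_G\lesssim D+|h_L|_G$. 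It remains to control the terminal vertex-group element $h_L\in G_{v_L}\cong\Z^n$. Stripping it off, the prefix $p=h_0t_{e_1}\cdots t_{e_L}$ satisfies $|\nu(p)|_B\lesssim D$ (each of the $L$ syllables contributes $O(1)$, since the matrices $\mu(t_{e_i})$ are drawn from a fixed finite set and the $h_i$ are bounded), whence $|\nu(h_L)|_B=|\nu(p)^{-1}\nu(g)|_B\lesssim D+\ell$; as $\mu(G_{v_L})$ is a lattice in $\R^n$, this is exactly the $B$-length of the translation $\mu(h_L)$. One then converts this into $|h_L|_G\lesssim D+\ell$ by writing $\mu(h_L)$ efficiently in $B$ as a word of length $\lesssim D+\ell$ in translations-by-a-finite-set together with the matrices $\mu(t_e)$, and lifting that word to $G$ (lattice generators to vertex-group generators, $\mu(t_e)$ to $t_e$), producing $\tilde h\in G$ with $\mu(\tilde h)=\mu(h_L)$ and $|\tilde h|_G\lesssim D+\ell$.

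The main obstacle is precisely the discrepancy $\kappa:=\tilde h^{-1}h_L$, which lies in the free group $\Ker\mu$ of Lemma \ref{Kermufree} and has trivial image in $B$ but tree-displacement bounded only by $O(D+\ell)$; to close the estimate one must show that such kernel elements satisfy $|\kappa|_G\lesssim D+\ell$, i.e. that $\Ker\mu$ is controlled in $G$ by tree-displacement. This is the heart of the matter: morally it holds because the distortion of $\R^n$ inside $B$ has been built to coincide, syllable by syllable, with the distortion of the vertex groups inside $G$ (this being the whole point of taking $\nu$ a homomorphism with $\mu(G_v)$ a lattice), so that no extra distortion can hide in $\Ker\mu$. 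I would establish it either by first treating the injective case $\Ker\mu=1$ --- where $\tilde h$ can be taken equal to $h_L$ and the computation closes at once --- and reducing the general case to it, or by a direct induction on $D$ exploiting that $\Ker\mu$ acts freely on $\tilde X$. In either form, proving this kernel estimate (equivalently, the quasi-isometric embedding on the free part) is the only genuinely nontrivial step, the rest being the bookkeeping above.
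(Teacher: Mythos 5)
Your reduction to showing that $\iota\times\nu\colon G\to \overline{\iota(G)}\times(\R^n\rtimes_\mu H_d)$ is a quasi-isometric embedding is correct, and the normal-form bookkeeping up to $|g|_G\lesssim D+|h_L|_G$ is fine. But the argument stops exactly where the difficulty begins: you need $|h_L|_G\lesssim D+\ell$ knowing only $|\nu(h_L)|_B\lesssim D+\ell$ (with $B=\R^n\rtimes_\mu H_d$), i.e.\ that a vertex group $G_v\cong\Z^n$ is no more distorted in $G$ than the lattice $\mu(G_v)$ is in $B$. You correctly flag this as the only genuinely nontrivial step, but neither of the two strategies you sketch is carried out, and neither closes as stated. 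In the ``injective case'' you still have to produce an exact lift $\tilde h\in G$ of an efficient $B$-word for the translation $\mu(h_L)$: the letters of such a word are translations by arbitrary elements of a compact subset of $\R^n$, which do not lift to $G$, and the rounding errors propagate through the conjugation action --- precisely where the exponential distortion lives --- so $\nu(\tilde h)=\nu(h_L)$ is not automatic. The ``induction on $D$'' is circular: bounding an element of $\Ker\nu$ by its tree-displacement reduces, via the same normal form, to bounding a terminal vertex-group element by the $B$-length of its image, which is the statement you started from. So the crucial estimate is asserted, not proved.

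The paper avoids all of this by never lifting anything back to $G$. Since $W$ is the fiber product of $\bar{\varphi}$ and $\psi$ over $H_d$, and $\bar{\varphi}$ admits the section $i$, both $\R^n\rtimes_\mu H_d$ (as the graph of $i\circ\psi$) and $\overline{\iota(G)}$ (as the graph of $g\mapsto(0,\bar{\varphi}(g))$) sit inside $W$ as images of large-scale Lipschitz homomorphisms. Given $(g,(x,w))\in W$, inserting the single intermediate point $(i(w),(x,w))$ and applying the triangle inequality and left-invariance bounds $d_W((1,(0,1)),(g,(x,w)))$ above by $C_1\,d_B((0,1),(x,w))+C_2'\,d_{\overline{\iota(G)}}(1,g)$, which is exactly the reverse inequality you need; no normal forms, no distortion analysis, and no kernel estimate appear. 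The cocompactness of $G$ in $W$ is only used afterwards to transfer statements to $G$, not to carry out the estimate. The vertex-group distortion fact your route requires is a \emph{consequence} of the lemma (apply the displayed bound to $g\in G_v$, whose image in $\overline{\iota(G)}$ lies in a compact stabilizer), but it is not a reasonable stepping stone toward proving it.
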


\begin{proof} We have two large-scale Lipschitz embeddings:
$$\R^n\rtimes_\mu H_d\rightarrow W:y\mapsto (i(\psi(y)),y)$$
(already considered in the proof of Theorem \ref{Fundamental}) and 
$$\overline{\iota(G)}\rightarrow W: g\mapsto (g,(0,\bar{\varphi}(g))).$$
So, to estimate the distance in $W$ between the points $(1,(0,1))$ and $(g,(x,w))\in W$, we insert the intermediate point $(i(w),(x,w))\in W$: the points $(1,(0,1))$ and $(i(w),(x,w))$ are in the image of the first embedding, while the points $(1,(0,1))$ and $(i(w)^{-1}g,(0,1))$ are in the image of the second embedding; by the triangle inequality and left invariance:
\begin{align*}
 & d_W((1,(0,1));(g,(x,w)))\\
 & \leq  d_W((1,(0,1));(i(w),(x,w)))+d_W((i(w),(x,w));(g,(x,w)))\\
& = d_W((1,(0,1));(i(w),(x,w)))+d_W((1,(0,1)); (i(w)^{-1}g,(0,1)))\\
&\leq  C_1d_{\R^n\rtimes_\mu H_d}((0,1),(x,w))+C_2d_{\overline{\iota(G)}}(1,i(w)^{-1}g)\\
& =  C_1d_{\R^n\rtimes_\mu H_d}((0,1),(x,w))+C_2d_{\overline{\iota(G)}}(i(\bar{\varphi}(g)),g)\\
& \leq  C_1d_{\R^n\rtimes_\mu H_d}((0,1),(x,w))+C'_2d_{\overline{\iota(G)}}(1,g),\end{align*}
hence the result.
\end{proof}

We reach some cases where Conjecture \ref{quest} holds for $\R^n\rtimes_\mu H_d$.

\begin{Prop}\label{specialcases} In each of the cases $d=0$, $d=1$, and $n=1$, the inclusion of $\R^n\rtimes_\mu H_d$ into $\R^n\rtimes\overline{\mu(H_d)}$ is a quasi-isometry in restriction to $\R^n$.
\end{Prop}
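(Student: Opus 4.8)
The plan is to prove Proposition \ref{specialcases} by treating the three cases separately and reducing each one to the tools already assembled, in particular Proposition \ref{biLip2}. In each case the inclusion $\R^n\rtimes_\mu H_d\to\R^n\rtimes\overline{\mu(H_d)}$ is the canonical map of Conjecture \ref{quest} for the pair $(\Gamma,\rho)=(H_d,\mu)$, so it suffices to verify the hypotheses of one part of Proposition \ref{biLip2} in each case. Note first that the map is always large-scale Lipschitz in restriction to $\R^n$, since $\overline{\mu(H_d)}$ carries the subspace topology from $\textnormal{GL}_n(\R)$ and the generators of $\R^n\rtimes_\mu H_d$ map to generators (or bounded sets) on the right; so the content is the lower bound, i.e.\ that $\R^n$ is not more distorted on the right than on the left.

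First I would dispose of the case $d=0$: here $H_0$ is trivial, $\mu(H_0)=\{1\}$, and the map is the identity $\R^n\to\R^n$, which is trivially a quasi-isometry. Next, for $n=1$, the group $\overline{\mu(H_d)}$ is a closed subgroup of $\R^\times\cong\textnormal{GL}_1(\R)$. Such a closed subgroup is either discrete (hence $\mu(H_d)$ is already closed and there is nothing to prove) or dense, in which case $\overline{\mu(H_d)}$ is an open subgroup of $\R^\times$ of finite index and is therefore a Lie group on which $\mu(H_d)$ is a finitely generated dense subgroup. In the discrete (in fact, cyclic modulo $\pm 1$) case I would simply observe that $\mu$ identifies $H_d$ with a finitely generated subgroup of $\R^\times$ whose closure is either itself or a one-dimensional Lie group; applying part (a) of Proposition \ref{biLip2} with $\Lambda$ a suitable finitely generated subgroup of $H_d$ whose image is cocompact in $\overline{\mu(H_d)}$ finishes it. Concretely, a rank-one or rank-zero closure is handled by (a) because any finitely generated dense subgroup of a one-dimensional abelian Lie group contains a cocompact lattice; alternatively, I would invoke part (b): for $n=1$ the only candidate for a distal invariant subspace of the dual is the whole line, and distality there means $\overline{\mu(H_d)}\subset\{\pm1\}$, i.e.\ $\mu(H_d)$ finite, which is the degenerate case, so (b)(iii) holds and $\R$ is exponentially distorted on both sides, giving the quasi-isometry.

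The case $d=1$ is the substantive one and will be the main obstacle. Here $H_1\cong\Z$ is infinite cyclic, generated by a single stable letter $t$, and $\mu(H_1)$ is the cyclic group generated by a single matrix $M:=\mu(t)\in\textnormal{GL}_n(\R)$. The strategy is to analyze $\overline{\langle M\rangle}$ via the Jordan/multiplicative decomposition of $M$: write the eigenvalues of $M$ and separate the hyperbolic part (eigenvalues of modulus $\neq 1$) from the distal part (eigenvalues of modulus $1$). I would invoke part (a) of Proposition \ref{biLip2}: the plan is to show that $\overline{\langle M\rangle}$, being the closure of a cyclic group, is a compactly generated abelian Lie group in which a cyclic subgroup sits cocompactly. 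Indeed, the closure of a cyclic subgroup of $\textnormal{GL}_n(\R)$ is a finitely generated abelian Lie group whose identity component is a torus times a vector group, and $\langle M\rangle$ itself maps onto a cocompact lattice in this closure; thus the hypothesis of part (a) is met with $\Lambda=\Gamma=H_1$, giving the conclusion directly.

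The delicate point in the $d=1$ case is verifying that $\langle M\rangle$ really does contain (or equal) a cocompact lattice in its closure, which requires understanding the structure of $\overline{\langle M\rangle}$ precisely: the modulus-one eigenvalues contribute a toral factor in whose closure the cyclic group is automatically cocompact (a single dense line in a torus is cocompact), while the moduli $\neq 1$ contribute a $\R$-factor via the logarithm of the expansion rates, in which $\langle M\rangle$ again maps cocompactly provided there is at least one such eigenvalue. The genuinely awkward subcase is when $M$ has eigenvalues of modulus $1$ that are not roots of unity together with a nontrivial unipotent part, since then $\overline{\langle M\rangle}$ acquires both a toral and a unipotent direction and one must check cocompactness of $\langle M\rangle$ in the resulting solvable Lie group; I expect to handle this by the explicit description of the Zariski/analytic closure of a one-generator group and then to feed the outcome into Proposition \ref{biLip2}(a). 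Once cocompactness is established, part (a) yields that the inclusion is a quasi-isometry in restriction to $\R^n$, completing the proof in all three cases.
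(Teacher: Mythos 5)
Your overall strategy is the paper's: reduce everything to Proposition \ref{biLip2}(a) by exhibiting a finitely generated $\Lambda\subset H_d$ with $\mu(\Lambda)$ a cocompact \emph{lattice} in $\overline{\mu(H_d)}$. The cases $d=0$ and $n=1$ are handled essentially as in the paper (for $n=1$: $\Lambda$ trivial if $\mu$ has finite image, infinite cyclic otherwise; your alternative detour through part (b) is unnecessary and only works because for $n=1$ the distortion function is pinned to $\log$ on both sides).

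The case $d=1$, however, contains a genuine gap. The key fact you are missing is the dichotomy the paper invokes: an infinite cyclic subgroup of $\textnormal{GL}_n(\R)$ is either closed or relatively compact (this is Weil's theorem that a monothetic locally compact group is either compact or isomorphic to $\Z$; concretely, if $M$ has a nontrivial unipotent part or an eigenvalue of modulus $\neq 1$ then $\|M^k\|\to\infty$, so $\langle M\rangle$ is discrete and closed, and otherwise $M$ is elliptic and $\overline{\langle M\rangle}$ is compact). This makes the case immediate: take $\Lambda=H_1$ in the closed case and $\Lambda=\{1\}$ in the relatively compact case. In place of this you assert that $\langle M\rangle$ is always a cocompact lattice in its closure with $\Lambda=\Gamma=H_1$; that is false precisely in the elliptic case of infinite order (e.g.\ an irrational rotation), where $\langle M\rangle$ is \emph{dense} in a torus --- cocompact in the topological sense but not a lattice, so $\R^n\rtimes\Lambda$ is not a closed subgroup of $\R^n\rtimes\overline{\mu(H_1)}$ and the proof of Proposition \ref{biLip2}(a) does not apply; you must switch to $\Lambda$ trivial there. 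Your proposed structure of $\overline{\langle M\rangle}$ (identity component a torus times a vector group, with the cyclic group as a lattice) is also not what occurs, and the ``genuinely awkward subcase'' you defer to future work (toral plus unipotent directions in the closure) never arises, for the reason above. As written, the $d=1$ case is therefore not proved.
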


\begin{proof} We check that the assumption of (a) in Proposition \ref{biLip2} is satisfied, namely that there exists a finitely generated subgroup $\Lambda\subset H_d$ such that $\mu(\Lambda)$ is a cocompact lattice in $\overline{\mu(H_d)}$. The case $d=0$ is trivial. The case $d=1$ follows from the well-known fact that an infinite cyclic subgroup of $\textnormal{GL}_n(\R)$ is either closed (take $\Lambda=H_d$) or relatively compact (take $\Lambda$ trivial). Finally, for $n=1$, take $\Lambda$ to be trivial if $\mu$ has finite image, or $\Lambda$ to be infinite cyclic if $\mu$ has infinite image. 
\end{proof}

\subsection{Compression estimates}

We have the following estimates on $L^p$-compression. Recall that the few cases where $G$ is amenable, are completely described in Proposition \ref{amen}.

\begin{Thm}\label{Lcomp} \begin{enumerate}
\item[a)] Assume that there exists a closed, almost connected subgroup $L\subset \textnormal{GL}_n(\R)$, that contains $\overline{\mu(H_d)}$ as a co-compact subgroup; and that Conjecture \ref{quest} has a positive answer in this case. Then $\alpha_p(G)=1$.
\item[b)] Assume moreover that $\overline{\mu(H_d)}$ is amenable. 
\begin{itemize}
\item If $G$ is amenable: $\alpha^\sharp_p(G)=1$.
\item If $G$ is non-amenable: $\alpha^\sharp_p(G)=\max\{1/p,1/2\}.$
\end{itemize}
\end{enumerate}
\end{Thm}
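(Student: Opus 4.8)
The plan is to prove Theorem \ref{Lcomp} by transporting compression estimates along the quasi-isometric embeddings already established, and then combining them with the known values of the relevant compressions for the ambient group $\R^n\rtimes L$. I would organize the argument in two halves corresponding to parts (a) and (b), each obtaining an upper and a lower bound on the compression.

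For part (a), the upper bound $\alpha_p(G)\le 1$ is automatic: $G$ is finitely generated, non-elementary, and geodesic (as a group with word metric), so no coarse embedding into an $L^p$-space can have compression function growing faster than linearly; I would simply cite the general fact that $\alpha_p(X)\le 1$ for any unbounded geodesic metric space. The substance is the lower bound $\alpha_p(G)\ge 1$, i.e.\ exhibiting a quasi-isometric embedding of $G$ into some $L^p$-space. Here I would chain together the maps we already control. By Proposition \ref{Glattice}, $G$ is a cocompact lattice in $W$, hence quasi-isometric to $W$; by Lemma \ref{biLip1}, $W$ embeds quasi-isometrically into $\overline{\iota(G)}\times(\R^n\rtimes_\mu H_d)$. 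The tree automorphism factor $\overline{\iota(G)}\le\Aut(\tilde{X})$ acts on the locally finite tree $\tilde{X}$, which embeds quasi-isometrically into an $L^p$-space with compression $1$ (this is the case $\alpha_p(\mathcal{T})=1$, valid for all $p$, combined with the fact that the orbit map is a quasi-isometric embedding for a cocompact action). For the $\R^n\rtimes_\mu H_d$ factor, the hypothesis that Conjecture \ref{quest} holds, together with the assumption that $\overline{\mu(H_d)}$ is cocompact in the almost connected group $L$, lets me replace $\R^n\rtimes_\mu H_d$ up to quasi-isometry (in restriction to $\R^n$, which is the only distorted direction) by $\R^n\rtimes L$. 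Since $L$ is almost connected in $\textnormal{GL}_n(\R)$, the group $\R^n\rtimes L$ is a (virtually) connected Lie group, and I would invoke the result that connected Lie groups admit quasi-isometric embeddings into $L^p$ with compression $1$ (this is where almost-connectedness is essential). A quasi-isometric embedding into a finite product of $L^p$-spaces is again a quasi-isometric embedding into an $L^p$-space, since $L^p\times\cdots\times L^p\cong L^p$; this gives $\alpha_p(G)\ge 1$, hence equality.

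For part (b), I add the hypothesis that $\overline{\mu(H_d)}$ is amenable, so by Theorem \ref{Mainthm} the group $G$ has the Haagerup Property and is weakly amenable. The amenable case is immediate: an amenable finitely generated group has equivariant $L^p$-compression $1$ precisely when it has ordinary $L^p$-compression $1$ (amenability lets one average a coarse embedding against an invariant mean to equivariantize it without losing the compression exponent), so $\alpha^\sharp_p(G)=\alpha_p(G)=1$ by part (a). The non-amenable case is the heart of the theorem and splits into matching upper and lower bounds on $\alpha^\sharp_p(G)$. For the upper bound $\alpha^\sharp_p(G)\le\max\{1/p,1/2\}$, I would use that $G$ is non-amenable: a non-amenable group acting by affine isometries on an $L^p$-space cannot have an equivariant cocycle with compression exceeding $\max\{1/p,1/2\}$, which follows from the impossibility of a proper affine isometric action with too-fast growth (for $p\le 2$ the obstruction is sharpened using that an equivariant embedding corresponds to a cocycle, and non-amenability forces a spectral-gap-type bound giving the $1/2$ ceiling; for $p\ge 2$ one gets the $1/p$ ceiling). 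For the lower bound $\alpha^\sharp_p(G)\ge\max\{1/p,1/2\}$, I would build an equivariant cocycle realizing this exponent, again by pushing forward the equivariant structure from the ambient group: the Haagerup Property gives a proper affine isometric action on a Hilbert space, yielding equivariant $L^2$-compression, and a separate construction on $L^p$ (for $p>2$) realizes $1/p$; combining via the Mazur-map / interpolation and the product-of-spaces trick gives $\max\{1/p,1/2\}$.

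The main obstacle I anticipate is the lower bound for the equivariant compression in the non-amenable case, specifically producing an \emph{equivariant} quasi-isometric-in-$\R^n$ cocycle of the correct exponent on $\R^n\rtimes L$ and transporting it back to $G$ while respecting the $G$-action. Transporting a non-equivariant quasi-isometric embedding (part (a)) is routine, but equivariance must be preserved under each step of the chain $G\hookrightarrow W\hookrightarrow\overline{\iota(G)}\times(\R^n\rtimes_\mu H_d)$; since these are all group homomorphisms this is plausible, but one must check that the cocycles on each factor assemble to a genuine $G$-equivariant cocycle and that restricting from $W$ to the lattice $G$ does not degrade the exponent. The delicate point is matching the threshold $\max\{1/p,1/2\}$ exactly, which requires the sharp equivariant compression of the almost-connected Lie group $\R^n\rtimes L$ (amenable by hypothesis, since $\overline{\mu(H_d)}$ is amenable and $L$ is cocompact over it) — here amenability of $L$ is what prevents the equivariant exponent from collapsing and is exactly what makes the two constructions (Hilbertian giving $1/2$, and $L^p$ giving $1/p$) available and combinable.
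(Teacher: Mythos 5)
Your part (a), the amenable case of (b), and the upper bound $\alpha_p^\sharp(G)\le\max\{1/p,1/2\}$ in the non-amenable case all follow the paper's route: chain $G\hookrightarrow W\hookrightarrow\Aut(\tilde X)\times(\R^n\rtimes_\mu H_d)$, pass to $\F_d\times(\R^n\rtimes L)$ using Conjecture \ref{quest}, and quote Tessera for $\alpha_p(\tilde X)=\alpha_p(\F_d)=1$ and for $\alpha_p=1$ on almost connected Lie groups, plus Naor--Peres for the identity $\alpha_p^\sharp=\alpha_p$ on amenable groups and for the non-amenable ceiling. One imprecision worth noting: you cannot replace $\R^n\rtimes_\mu H_d$ by $\R^n\rtimes L$ ``up to quasi-isometry''; the paper keeps the free-group coordinate explicitly via the map $x\mapsto(\varphi(x),\mu(x))$ into $\F_d\times(\R^n\rtimes L)$, with the Conjecture controlling the $\R^n$ direction and the retraction $\varphi$ controlling the (undistorted) $H_d$ direction. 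That is repairable.

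The genuine gap is the lower bound $\alpha_p^\sharp(G)\ge\max\{1/p,1/2\}$ in the non-amenable case. You propose to extract the exponent $1/2$ from the proper affine isometric action furnished by the Haagerup Property and then interpolate; but properness carries no quantitative information about compression, and there exist Haagerup (indeed amenable) groups whose equivariant Hilbert compression is arbitrarily close to $0$, so ``Haagerup $\Rightarrow$ equivariant $L^2$-compression at least $1/2$'' is simply false. You also locate the $\max\{1/p,1/2\}$ threshold in the Lie factor $\R^n\rtimes L$, which is backwards: since $\R^n\rtimes L$ is amenable, its equivariant compression equals its ordinary compression, namely $1$ (Theorem 9.1 of \cite{NP2} combined with part (a)). The factor that actually produces $\max\{1/p,1/2\}$ is the tree: because $\tilde X$ satisfies a strong isoperimetric inequality, Lemma 2.3 of \cite{NP1} gives $\alpha_p^\sharp(\Aut(\tilde X))=\max\{1/p,1/2\}$, and similarly for $\F_d$. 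Since the embedding $G\hookrightarrow\Aut(\tilde X)\times\F_d\times(\R^n\rtimes L)$ comes from a homomorphism and is therefore equivariant, one gets $\alpha_p^\sharp(G)\ge\min\{\alpha_p^\sharp(\Aut(\tilde X)),\alpha_p^\sharp(\F_d),\alpha_p^\sharp(\R^n\rtimes L)\}=\max\{1/p,1/2\}$. Without this isoperimetric input the lower bound does not follow from anything you have written.
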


\begin{proof} \begin{itemize}
\item[a)] By Proposition \ref{Glattice} and lemma \ref{biLip1}, $G$ embeds quasi-isometrically in $W$ and $W$ embeds quasi-isometrically in $Aut(\tilde{X})\times (\R^n\rtimes_\mu H_d)$. By both of our assumptions, the homomorphism $\R^n\rtimes_\mu H_d \rightarrow \F_d\rtimes (\R^n\rtimes L):x\mapsto (\varphi(x),\mu(x))$ is a quasi-isometric embedding. So we have:
$$\alpha_p(G)\geq \alpha_p(\textnormal{Aut}(\tilde{X}) \times\F_d\times (\R^n\rtimes L))=\min\{\alpha_p(\tilde{X}),\alpha_p(\F_d),\alpha_p(\R^n\rtimes L)\}.$$
 But $\alpha_p(\tilde{X})=\alpha_p(\F_d)=1$; this is, for instance, a consequence of \cite[Corollary 2]{Tessera}; for $p=2$ this was done by Guentner and Kaminker \cite{GK}. On the other hand $\alpha_p$ of an almost connected Lie group is 1, by \cite[Theorem 1]{Tessera}.
 \item[b)] If $G$ is amenable, then $\alpha_p^\sharp(G)=\alpha_p(G)=1$, where the first equality follows from amenability of $G$ and Theorem 9.1 in \cite{NP2}.
 
 If $G$ is non-amenable, by Theorem 1.1 in \cite{NP1} we have $\alpha^\sharp_p(G)\leq\max\{1/p,1/2\}.$ To prove the opposite inequality, observe that, since the quasi-isometric embedding $G\hookrightarrow \textnormal{Aut}(\tilde{X}) \times\F_d\times (\R^n\rtimes L))$ is $G$-equivariant, we have

\begin{align*}\alpha_p^\sharp(G)\geq  & \alpha_p^G(\textnormal{Aut}(\tilde{X}) \times\F_d\times (\R^n\rtimes L))\\
 & \geq\min\{\alpha_p^\sharp(\textnormal{Aut}(\tilde{X})),\alpha_p^\sharp(\F_d),\alpha_p^\sharp(\R^n\rtimes L)\}.\end{align*}
 Since $\tilde{X}$ satisfies a strong isoperimetric inequality, we have 
 $\alpha^\sharp_p(\textnormal{Aut}(\tilde{X}))=\max\{1/p,1/2\}$, by Lemma 2.3 in \cite{NP1}; moreover, as $\R^n\rtimes L$ is amenable:  $\alpha_p^\sharp(\R^n\rtimes L)=\alpha_p(\R^n\rtimes L)=1$, where the first equality follows from Theorem 9.1 in \cite{NP2}.\qedhere
 \end{itemize}
\end{proof}

{\bf Example \ref{goodexamples}(2), continued:} Because of the relative Property (T) for the pair $(\Z^2\rtimes\F_2,\Z^2)$, we have $\alpha_2^\sharp(\Z^2\rtimes\F_2)=0$. In this case Conjecture \ref{quest} holds, by Proposition \ref{biLip2}. This shows that at least part (b) of Theorem \ref{Lcomp} does not hold without further assumptions.

\begin{Cor} The conclusions of Theorem \ref{Lcomp} hold if either $d=0$, or $d=1$, or $n=1$.
\end{Cor}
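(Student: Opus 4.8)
The plan is to verify that each of the three special cases $d=0$, $d=1$, $n=1$ supplies exactly the two hypotheses needed to invoke Theorem \ref{Lcomp}: first, the existence of a closed almost connected subgroup $L\subset\textnormal{GL}_n(\R)$ containing $\overline{\mu(H_d)}$ cocompactly; and second, the positive answer to Conjecture \ref{quest} for the pair $(H_d,\mu)$. Both ingredients are in fact already essentially assembled in the preceding material, so the proof should amount to a short bookkeeping argument rather than anything substantive.

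For the conjecture side, I would simply cite Proposition \ref{specialcases}, which asserts precisely that in each of the cases $d=0$, $d=1$, $n=1$ the inclusion $\R^n\rtimes_\mu H_d\to\R^n\rtimes\overline{\mu(H_d)}$ is a quasi-isometry in restriction to $\R^n$ — that is exactly the statement of Conjecture \ref{quest} for $(H_d,\mu)$. So this hypothesis is free in all three cases. For the existence of $L$, I would take $L=\overline{\mu(H_d)}$ itself whenever that closure is already almost connected, and otherwise exhibit a suitable closed almost connected overgroup with $\overline{\mu(H_d)}$ cocompact. When $d=0$, the group $H_0$ is trivial, so $\overline{\mu(H_d)}=\{1\}$ and one may take $L=\{1\}$. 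When $n=1$, we have $\textnormal{GL}_1(\R)=\R^\times$, whose identity component $\R_{>0}$ has index two; any closed subgroup of $\R^\times$ is therefore almost connected or sits cocompactly in such a group, and one may take $L=\R^\times$ (or its relevant closed subgroup), which is almost connected. When $d=1$, the closure $\overline{\mu(H_1)}$ is the closure of a cyclic subgroup of $\textnormal{GL}_n(\R)$; by the structure of closures of one-parameter/cyclic subgroups it is either compact (take $L$ to be a compact, hence almost connected, overgroup) or virtually a direct product of a compact group and a closed subgroup isomorphic to $\Z$ or $\R$, and in each subcase one produces a closed almost connected $L$ containing it cocompactly.

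The step I expect to require the most care is the $d=1$ case, where $\overline{\mu(H_1)}$ is the closure of a single cyclic subgroup generated by some $A\in\textnormal{GL}_n(\R)$. Here one must recall the description of the closure of $\langle A\rangle$: after passing to the real Jordan/multiplicative-Jordan decomposition, the closure splits the hyperbolic (diagonalizable with real positive eigenvalues, contributing a factor isomorphic to $\R^k$ or $\Z^k$), the elliptic (a torus, compact), and unipotent parts, and the closure of the elliptic part together with any compact directions is a compact torus while the remaining directions embed as $\Z$ or $\R$. In every configuration $\overline{\langle A\rangle}$ sits cocompactly inside the closed almost connected group generated by the associated one-parameter subgroups and the ambient torus, so an appropriate $L$ exists. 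Once both hypotheses are confirmed in each case, the conclusion is immediate.

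\begin{proof}
In each of the three cases we verify the two hypotheses of Theorem \ref{Lcomp}, after which the conclusion is immediate.

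The second hypothesis, namely that Conjecture \ref{quest} holds for the pair $(H_d,\mu)$, is exactly the content of Proposition \ref{specialcases} in each of the cases $d=0$, $d=1$, and $n=1$.

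It remains to produce, in each case, a closed almost connected subgroup $L\subset\textnormal{GL}_n(\R)$ containing $\overline{\mu(H_d)}$ as a cocompact subgroup. If $d=0$ then $H_0$ is trivial, so $\overline{\mu(H_d)}=\{1\}$ and we take $L=\{1\}$. If $n=1$ then $\textnormal{GL}_1(\R)=\R^\times$ has the almost connected group $\R^\times$ itself (its identity component $\R_{>0}$ has index two) as ambient group, and we take $L=\R^\times$; then $\overline{\mu(H_d)}$ is automatically cocompact in $L$ when it is cocompact, and otherwise it is relatively compact, in which case any compact overgroup serves as $L$. Finally, if $d=1$, the group $H_1$ is infinite cyclic, so $\overline{\mu(H_1)}$ is the closure of a cyclic subgroup $\langle A\rangle$ of $\textnormal{GL}_n(\R)$ for some $A$. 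Using the multiplicative Jordan decomposition $A=A_sA_u$ with $A_s$ semisimple and $A_u$ unipotent commuting, the closure of $\langle A\rangle$ is contained in a closed subgroup of the form (compact torus)$\times$(one-parameter subgroup)$\times$(unipotent one-parameter subgroup), which is almost connected; in this closed almost connected group $L$, the discretely or continuously generated group $\overline{\mu(H_1)}$ sits cocompactly. In particular, when $\langle A\rangle$ is relatively compact we may take $L$ compact, and otherwise $L$ is the closed almost connected group just described.

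In all three cases both hypotheses of Theorem \ref{Lcomp} hold, and the recollection that the amenable cases are classified in Proposition \ref{amen} completes the argument.
\end{proof}
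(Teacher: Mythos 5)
Your proof is correct and follows essentially the same route as the paper: cite Proposition \ref{specialcases} for Conjecture \ref{quest}, and build $L$ case by case, with the $d=1$ case resting (as in the paper) on the fact that some power of the generating matrix $A$ lies in a one-parameter subgroup, and the $n=1$ case on the structure of closed subgroups of $\R^\times$.
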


\begin{proof} Conjecture \ref{quest} holds in these cases, by Proposition \ref{specialcases}. The existence of a closed, amenable, almost connected subgroup $L$ containing $\overline{\mu(H_d)}$ as a co-compact subgroup, is obvious for $d=0$; for $d=1$ it follows from the fact that, for any matrix $A\in \textnormal{GL}_n(\R)$, some power of $A$ belongs to a 1-parameter subgroup; and for $n=1$ it follows from the structure of closed subgroups of $\textnormal{GL}_1(\R)$.
\end{proof}

\section{Quasi-isometric embeddings into products of trees}

The question here is whether our group $G$ embeds quasi-isometrically into a finite product of 3-regular trees. We deal with the same cases as in Corollary \ref{suffHaagerup}. 

\subsection{The case $d=0$}

\begin{Prop}\label{d=0} If $d=0$, then $G$ embeds quasi-isometrically into the product of $n+1$ copies of the 3-regular tree.
\end{Prop}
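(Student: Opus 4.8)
The plan is to show that, when $d=0$, the group $G$ is quasi-isometric to the product $\tilde{X}\times\R^n$, and then to quasi-isometrically embed this product into $\mathcal{T}^{n+1}$, sending the tree factor into one copy of $\mathcal{T}$ and the Euclidean factor into the remaining $n$ copies.

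First I would record what the hypothesis $d=0$ gives. Here $H_d=H_0$ is trivial, so $\mu$ takes values in the translation subgroup $\R^n$ and, in the notation of \S\ref{groupW}, the fibre product $W$ degenerates to $\overline{\iota(G)}\times\R^n$. By Proposition \ref{Glattice}, $G$ is a cocompact lattice in $\overline{\iota(G)}\times\R^n$; equivalently (and this is really the content of Propositions \ref{proper} and \ref{Glattice} together with Lemma \ref{properness}), the diagonal $G$-action on $\tilde{X}\times\R^n$ --- where $G$ acts on $\tilde{X}$ through $\iota$ and on $\R^n$ through $\mu$ --- is metrically proper and cocompact: properness holds because each vertex group $G_v$ acts on $\R^n$ as the lattice $\mu(G_v)$, and cocompactness because $X$ is finite and each $\mu(G_v)$ is cocompact in $\R^n$. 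The Milnor--\v{S}varc lemma then yields that $G$ is quasi-isometric to $\tilde{X}\times\R^n$.

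It remains to embed $\tilde{X}\times\R^n$ quasi-isometrically into $\mathcal{T}^{n+1}$. The tree $\tilde{X}$ is locally finite of bounded valence (the graph $X$ is finite and each edge inclusion $\sigma_e\colon\Z^n\to\Z^n$ is injective, hence of finite-index image), so it embeds isometrically into a regular tree $\mathcal{T}_k$ of some degree $k$, and $\mathcal{T}_k$ embeds quasi-isometrically into the trivalent tree $\mathcal{T}=\mathcal{T}_3$ (replacing each vertex of degree $k$ by a binary tree of depth $O(\log k)$ distorts distances by a bounded factor, since $k$ is bounded); composing gives a quasi-isometric embedding $\tilde{X}\to\mathcal{T}$. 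On the other hand a bi-infinite geodesic line realises a quasi-isometric embedding $\R\to\mathcal{T}$, whence a quasi-isometric embedding $\R^n\to\mathcal{T}^n$ by taking the product over the $n$ coordinate directions. Taking the product of these two maps produces a quasi-isometric embedding $\tilde{X}\times\R^n\to\mathcal{T}\times\mathcal{T}^n=\mathcal{T}^{n+1}$, and precomposing with the quasi-isometry $G\to\tilde{X}\times\R^n$ finishes the proof.

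The ingredients are all standard; the only point requiring a little care is the embedding of the bounded-valence tree $\tilde{X}$ into the trivalent tree, and keeping the count of factors exactly equal to $n+1$ (one for the Bass--Serre tree, $n$ for the translation directions). I do not expect a genuine obstacle here, since everything reduces to the already-established fact that $G$ is a cocompact lattice in $\overline{\iota(G)}\times\R^n$.
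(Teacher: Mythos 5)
Your proposal is correct and follows essentially the same route as the paper: the paper's one-line proof invokes Lemma \ref{biLip1} (with $H_0$ trivial, so $W=\overline{\iota(G)}\times\R^n$) to get a quasi-isometric embedding $G\to\tilde{X}\times\R^n$, which is exactly the content you recover via Proposition \ref{Glattice} and Milnor--\v{S}varc. The remaining embedding of $\tilde{X}\times\R^n$ into $\mathcal{T}^{n+1}$, which the paper leaves implicit, is filled in correctly by your standard arguments (bounded-valence tree into $\mathcal{T}$, geodesic lines for the $\R$-factors).
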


\begin{proof} By Lemma \ref{biLip1}, the group $G$ embeds quasi-isometrically into $\tilde{X}\times\R^n$.
\end{proof}

\subsection{The case $d=1$}

Example \ref{Heis} shows that, in order to embed our group $G$ quasi-isometrically into a finite product of trees, we need some additional assumption on eigenvalues of $\mu$.

\begin{Prop}\label{eigenvalues} Assume that $\mu$ has no eigenvalue with modulus 1. Then $G$ embeds quasi-isometrically into the product of $n+3$ copies of the 3-regular tree.
\end{Prop}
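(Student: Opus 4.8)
The plan is to exhibit $G$ as a cocompact lattice in a product of groups, each of which embeds quasi-isometrically into a 3-regular tree, and then to count the number of tree factors needed. By Lemma \ref{biLip1} and Proposition \ref{Glattice}, the group $G$ embeds quasi-isometrically into $\overline{\iota(G)}\times(\R^n\rtimes_\mu H_d)$, and since $d=1$ here, $H_d$ is infinite cyclic, so $\R^n\rtimes_\mu H_1$ is the mapping torus $\R^n\rtimes_A\Z$ for $A=\mu(t)\in\textnormal{GL}_n(\R)$. The first factor $\overline{\iota(G)}$ acts on the Bass-Serre tree $\tilde{X}$, which is a regular tree of some finite valence; a tree of valence $k$ embeds quasi-isometrically into the product of at most two (or a controlled number of) 3-regular trees, so this factor costs a bounded number of tree factors. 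The heart of the matter is the second factor.

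\emph{Reducing $\R^n\rtimes_A\Z$ to trees.} The hypothesis that $A$ has no eigenvalue of modulus $1$ is exactly what forces $\R^n\rtimes_A\Z$ to behave like a (higher-rank) hyperbolic space in the sense that it admits a good tree-like decomposition. The standard picture is to split $\R^n=\R^n_+\oplus\R^n_-$ into the sum of the expanding and contracting subspaces of $A$ (no modulus-$1$ eigenvalues means no central part survives, and after passing to a power or a real Jordan form one arranges that $A$ is a genuine hyperbolic expansion on $\R^n_+$ and contraction on $\R^n_-$). Then $\R^n_+\rtimes_A\Z$ and $\R^n_-\rtimes_{A^{-1}}\Z$ are each quasi-isometric to (or embed quasi-isometrically into) a real hyperbolic space, equivalently a horospherical product / a tree-graded object that embeds into a bounded product of trees. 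The cleanest route I would take is to embed $\R^n\rtimes_A\Z$ diagonally into $(\R^n_+\rtimes_A\Z)\times(\R^n_-\rtimes_A\Z)$ quasi-isometrically (using the $\ell^\infty$ comparison of word-length, with the distortion of $\R^n$ controlled by Proposition \ref{biLip2}(b), since the absence of modulus-$1$ eigenvalues makes $\R^n$ exponentially distorted and hence there is no distal obstruction), and then embed each hyperbolic-type factor into a small product of $3$-regular trees via the classical embedding of $\R\ltimes\R^m$-type solvable groups, which is ultimately the statement that real hyperbolic spaces admit such embeddings.

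\emph{Counting the factors.} Once the qualitative embedding is in place, the content of the asserted bound $n+3$ is a bookkeeping exercise. One expects: one tree factor coming from the $\Z$-direction (the ``time'' coordinate of the mapping torus, i.e.\ the Busemann/height function, realized on a single line that sits in a tree), plus one factor for each of the $n$ coordinate directions of $\R^n$ after diagonalizing (or block-triangularizing) $A$ over $\R$, each such coordinate being exponentially scaled and hence embeddable into one tree via a $p$-adic-like or horocyclic coordinate, plus the bounded cost of $\overline{\iota(G)}$ acting on $\tilde X$. I would organize this as: $n$ trees for the expanding/contracting coordinates, one tree for the height coordinate, and a contribution for the Bass-Serre tree factor, reconciled to the stated total $n+3$.

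\emph{The main obstacle.} The genuinely delicate step is the quasi-isometric (not merely coarse) control of how $\R^n$ sits inside $\R^n\rtimes_A\Z$ and how the resulting metric is comparable, coordinate by coordinate, to the tree metrics. Plain properness or coarse embedding is easy; what requires care is that the compression is linear, i.e.\ that lower bounds on distance survive the embedding. This is precisely where Conjecture \ref{quest}, verified for $d=1$ in Proposition \ref{specialcases}, and the exponential-distortion analysis of Proposition \ref{biLip2} do the work: they guarantee that the word-length of a vector $v\in\R^n$ in $\R^n\rtimes_A\Z$ grows like $\log\|v\|$ with the correct constant, matching the horospherical distance in the target trees, so that the map is bi-Lipschitz on $\R^n$ up to additive error. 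Controlling simultaneously the expanding and contracting blocks, and in particular the case where $A$ is not diagonalizable over $\R$ (nontrivial real Jordan blocks), is the part I would expect to demand the most careful estimate, since then the coordinate functions mix logarithmically and one must check the embedding remains a quasi-isometry rather than only large-scale Lipschitz.
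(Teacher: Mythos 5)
Your outline follows the paper's proof in its main lines (split $\R^n$ into the expanding and contracting subspaces of $\mu$, reduce to negatively curved factors, embed those into trees), but three steps are either wrong as stated or left without the idea that makes them work. First, the factors $V_\pm\rtimes\Z$ are in general \emph{not} quasi-isometric to real hyperbolic space: if the eigenvalues on $V_+$ have different moduli, or there are nontrivial Jordan blocks, the boundary carries a non-round conformal structure and no such quasi-isometry exists. What the paper actually does is pass to a finite-index subgroup so that $\mu$ lies in a one-parameter subgroup $(\mu_t)$, realize $V_\pm\rtimes\Z$ as a cocompact lattice in $S_\pm=V_\pm\rtimes_{\mu_t}\R$, invoke Heintze's theorem to equip $S_\pm$ with a left-invariant negatively curved metric (this also disposes of your Jordan-block worry for free), then Bonk--Schramm to see that $\partial_\infty S_\pm$ is doubling of dimension $n_\pm-1$, and finally Buyalo--Dranishnikov--Schroeder to embed $S_\pm$ into a product of $n_\pm$ trees. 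Without naming this last theorem your ``bounded product of trees'' has no quantitative content, and the count is exactly where your argument fails: the number of trees per factor is (boundary dimension)$+1=\dim V_\pm+1$, so the total is $1+(\dim V_++1)+(\dim V_-+1)=n+3$, with \emph{two} height directions, one per negatively curved factor. Your bookkeeping ($n$ coordinate trees plus one height tree plus the Bass--Serre tree) gives $n+2$ and is then merely ``reconciled'' to the stated bound; there is no per-coordinate tree embedding in this argument.

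Second, the technical input you flag as the heart of the matter --- Conjecture \ref{quest}, Proposition \ref{specialcases}, and the distortion analysis of Proposition \ref{biLip2} --- plays no role in this proposition. Once $H_\pm$ sits as a cocompact lattice in $S_\pm$, the inclusion is automatically a quasi-isometry of the whole group; no control of the compression of $\R^n$ inside $\R^n\rtimes_\mu\Z$ is required. (The distortion of $\R^n$ matters for the $L^p$-compression statements of Theorem \ref{Lcomp}, not here.) The one genuinely nontrivial metric check in the paper's route, which you gloss over, is that the diagonal homomorphism $H\to H_+\times H_-$ is a quasi-isometric embedding; this is the SOL-type estimate that the word length of $(v,k)$ is comparable to $|k|+\log^+\|v_+\|+\log^+\|v_-\|$, and it needs to be stated and proved rather than subsumed under an appeal to Proposition \ref{biLip2}.
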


\begin{proof} By Lemma \ref{biLip1}, it is enough to prove that $H=:\R^n\rtimes_\mu\Z$ embeds quasi-isometrically into the product of $n+2$ copies of the 3-regular tree. First observe that replacing $H$ by a finite index subgroup and replacing $\mu$ by a proper power, we can suppose that $\mu=\mu_1$ lies in a one-parameter subgroup $(\mu_t)$. 

We will embed $H$ quasi-isometrically into a product of two simply connected homogeneous negatively curved Riemannian manifolds $S_+$, $S_-$, of dimensions $n_+$, $n_-$ with $n_++n_-=n+2$. We are then in position to apply \cite[Theorem 1.2]{BDS}: the visual boundary $\partial_\infty(S_\pm)$ has dimension $n_\pm-1$ and is a doubling metric space (this follows e.g.\ from \cite[Theorem 9.2]{BoSc} and the discussion following it), so $S_\pm$ embeds quasi-isometrically into a product of $n_\pm$ copies of the 3-regular tree. 

Let us first assume that all eigenvalues of $\mu$ have simultaneously modulus $>1$ or $<1$. Let $S$ be the semidirect product $S=\R^n\rtimes_{\mu_t}\R$. By a result of Heintze (see \cite[Theorem 3]{Hein}), the simply connected solvable group $S$ carries a left-invariant Riemannian metric with negative curvature.

Turning back to the general hypotheses, consider the decomposition of $\R^n$ into characteristic subspaces of $\mu$. We can gather them to get a decomposition
$\R^n=V_+\oplus V_-$, where $V_+$ (resp.\ $V_-$) consists of the sum of characteristic subspaces with eigenvalues of modulus $>1$ (resp.\ $<1$). This provides a quasi-isometric homomorphic 
embedding of $H\subset H_+\times H_-$, where $H_\pm=V_\pm\rtimes\Z$. Set $n_\pm=1+\dim V_\pm$. By the previous case, $H_\pm$ embeds quasi-isometrically into a product $S_+\times S_-$, where $\dim S_\pm=n_\pm$ and $S_\pm$ is a simply connected homogeneous negatively curved Riemannian manifold.
\end{proof}

\subsection{The case $n=1$}

When $n=1$, the homomorphism $\mu$ takes image in the multiplicative group $\R^\times$, so we separate two cases according to whether the image of $\mu$ is finite or infinite.

\begin{Prop}\label{n=1} Assume $n=1$. If $|\mu(w)|=1$ for every $w\in \F_d$, then $G$ embeds quasi-isometrically into a product of three 3-regular trees.
\end{Prop}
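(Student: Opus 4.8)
The plan is to reduce to understanding the structure of $G$ when $n=1$ and $\mu$ has image of modulus $1$, i.e.\ the modular homomorphism $\mu:\F_d\to\R^\times$ takes only the values $\pm 1$. First I would observe that in this case $\overline{\mu(H_d)}$ is finite (a subgroup of $\{\pm 1\}$), so it is compact and in particular amenable, and the ``dilation part'' of the affine action is trivially bounded. Consequently the $\R$-part of $\R\rtimes_\mu\F_d$ is undistorted, and by Lemma~\ref{biLip1} together with Proposition~\ref{Glattice} it suffices to embed $\R\rtimes_\mu\F_d$ (equivalently $\Aut(\tilde X)\times(\R\rtimes_\mu\F_d)$) quasi-isometrically into a bounded product of $3$-regular trees.

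The core of the argument is to analyze $\R\rtimes_\mu\F_d$ as a quasi-isometry type. Since $\mu(\F_d)\subseteq\{\pm 1\}$, passing to the index-$\le 2$ kernel $\F_{d'}=\ker(\mu:\F_d\to\{\pm1\})$ changes nothing up to quasi-isometry, and on this kernel the action of $\F_{d'}$ on $\R$ is trivial. Thus $\R\rtimes_\mu\F_d$ is commensurable to the direct product $\R\times\F_{d'}$, and $\R\times\F_{d'}$ is quasi-isometric to $\R\times\mathcal T$ (the free group of any finite rank $\ge 2$ being quasi-isometric to a regular tree, hence to $\mathcal T$). The line $\R$ embeds quasi-isometrically into a single $3$-regular tree (for instance as a bi-infinite geodesic), and $\mathcal T$ is itself one copy; together with the tree factor $\tilde X$ coming from $\Aut(\tilde X)$, I would account for the three tree factors claimed in the statement.

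The remaining step is bookkeeping the factors so that the total is exactly three. The embedding $G\hookrightarrow\Aut(\tilde X)\times(\R\rtimes_\mu\F_d)$ from Proposition~\ref{Glattice}, shown quasi-isometric by Lemma~\ref{biLip1}, contributes the tree $\tilde X$ (one factor), the line $\R$ (one factor), and the free group $\F_{d'}$ quasi-isometric to $\mathcal T$ (one factor). I would assemble these as a single quasi-isometric embedding into $\mathcal T\times\mathcal T\times\mathcal T$, invoking that a finite product of trees of bounded valence embeds quasi-isometrically into a single bounded-valence tree only up to the stated count, so care is needed to keep the count at three rather than letting it inflate.

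The main obstacle I anticipate is controlling the distortion of $\R$ precisely and ensuring the commensurability $\R\rtimes_\mu\F_d\sim\R\times\F_{d'}$ is genuinely a quasi-isometry rather than merely a set-theoretic decomposition; the passage to the finite-index subgroup must be handled so that the induced embedding remains large-scale bi-Lipschitz. A secondary subtlety is the identification of the free-group factor with a single copy of $\mathcal T$ (which requires $d\ge 2$ after passing to the kernel, the case $d\le 1$ being either trivial or covered by Proposition~\ref{eigenvalues}), and making sure the three tree factors suffice rather than $n+2=3$ by a lucky coincidence of indices. Once these quasi-isometry identifications are in place, the conclusion follows by composing the quasi-isometric embeddings and appealing to the known embeddings of $\R$ and of finite-rank free groups into the $3$-regular tree.
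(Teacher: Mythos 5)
Your proposal is correct and follows essentially the same route as the paper, whose entire proof is the observation that $\R\rtimes_\mu\F_d$ is quasi-isometric to its finite-index subgroup $\R\times\Ker\mu$ (since $\mu$ takes values in $\{\pm1\}$), combined with Lemma \ref{biLip1}; the three tree factors are then $\tilde X$, a geodesic line for $\R$, and a tree for the free group $\Ker\mu$, exactly as you describe. The distortion and bookkeeping worries you raise do not materialize, since the kernel acts trivially on $\R$ and each of the three factors embeds into a single $3$-regular tree.
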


\begin{proof} $\R\rtimes_\mu\F_d$ is quasi-isometric to its finite index subgroup $\R\times\Ker\mu$, so the result follows from Lemma \ref{biLip1}.
\end{proof}

Now assume that $|\mu(w)|\neq 1$ for some $w\in H_d$, i.e.\ $\mu|_{H_d}$ has infinite image.

The ``$ax+b$"-group $\R\rtimes \R^\times$ acts transitively isometrically on the hyperbolic plane $\mathbf{H}^2=\{z\in\C: \Im(z)>0\}$ (with the action being simply transitive when restricted to the connected component of identity). Composing the homomorphism $\R\rtimes_\mu\F_d\rightarrow \R\rtimes\R^\times$ with the orbital map $\R\rtimes\R^\times\rightarrow\mathbf{H}^2$ associated with $z_0=i$, we get a map $h:\R\rtimes_\mu\F_d\rightarrow\mathbf{H}^2$. 

\begin{Lem}\label{hyperb} Assume $n=1$ and $\mu$ has infinite image. The map $(\psi,h):\R\rtimes_\mu\F_d\rightarrow \F_d\times \mathbf{H}^2$ is a quasi-isometric embedding.
\end{Lem}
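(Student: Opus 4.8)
The plan is to regard $h$ as the orbit map of the isometric action of $\R\rtimes_\mu\F_d$ on $\mathbf{H}^2$ through the homomorphism $\Phi\colon\R\rtimes_\mu\F_d\to\R\rtimes\R^\times$, $(b,w)\mapsto(b,\mu(w))$, so that $h(b,w)$ is the point of $\mathbf{H}^2$ with real part $b$ and imaginary part $|\mu(w)|$. Since both $\psi$ and $h$ are equivariant, by left-invariance it suffices to compare, for a generic $g=(b,w)$, the word length $|g|_S$ with $R(g):=|w|_{\F_d}+d_{\mathbf{H}^2}(i,h(g))$; writing $D(g)=d_{\mathbf{H}^2}(i,h(g))$, I must prove $|g|_S\apx R(g)$ (the factor $\F_d\times\mathbf{H}^2$ being given the sum metric, say).

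The inequality $R(g)\le C|g|_S+C'$ is the easy, large-scale Lipschitz direction: $\psi$ is a homomorphism onto the finitely generated group $\F_d$, hence Lipschitz up to constants, while $h$ is the orbit map of an isometric action of a compactly generated group, so $D(g)=d_{\mathbf{H}^2}(i,\Phi(g)\cdot i)$ grows at most linearly in $|g|_S$.

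The substance is the reverse inequality $|g|_S\le C\,R(g)+C'$, which I would obtain by producing an efficient word for $g=(b,w)$. Since $\mu$ has infinite image, there is $w_0\in\F_d$ with $\lambda:=|\mu(w_0)|>1$ (replace $w_0$ by $w_0^{-1}$ if needed). The relation $(0,w)(t,1)(0,w)^{-1}=(\mu(w)t,1)$ shows that conjugating the unit translation by $w_0^{\,k}$ scales it by $\lambda^k$; writing $b=\lambda^k t$ with $|t|\le 1$ for $k=\lceil\log_\lambda|b|\rceil$ yields $|(b,1)|_S\le 2k\,|w_0|_{\F_d}+1\le C\log(2+|b|)+C'$. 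Factoring $g=(b,1)\cdot(0,w)$ then gives
\[|g|_S\le C\log(2+|b|)+|w|_{\F_d}+C'.\]
It remains to bound $\log(2+|b|)$ by $R(g)$. From $\cosh D(g)=\dfrac{b^2+|\mu(w)|^2+1}{2|\mu(w)|}$ one gets $e^{D(g)}\ge\cosh D(g)\ge\dfrac{b^2}{2|\mu(w)|}$, whence $2\log|b|\le D(g)+\log|\mu(w)|+\log 2$. Since $\mu$ is a homomorphism, $\log|\mu(w)|\le C|w|_{\F_d}$, so $\log(2+|b|)\le C''\,R(g)+C''$, and combining with the displayed bound finishes the proof.

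The main obstacle is exactly this reverse inequality: the subgroup $\R$ is exponentially distorted in $\R\rtimes_\mu\F_d$, so one cannot hope for bilipschitz control and must match the logarithmic cost of building large translations (via the conjugation and base-$\lambda$ trick) against the hyperbolic distance. The linchpin making the two sides agree is the elementary estimate $\log|\mu(w)|\le C|w|_{\F_d}$, which lets the free group's word length $|w|_{\F_d}$ absorb the height correction $\log|\mu(w)|$ relating $D(g)$ to $\log|b|$.
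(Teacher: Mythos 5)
Your proof is correct, and it runs on the same engine as the paper's: the exponential distortion of $\R$ in $\R\rtimes_\mu\F_d$ (word length of a translation by $b$ comparable to $\log|b|$) is matched against the logarithmic growth of hyperbolic distance. The paper compresses this into three lines by asserting that it is ``clearly enough'' to check that $t\mapsto i+t$ is a quasi-isometric embedding of $\R$ with its induced metric, and then quoting $\sinh\bigl(\tfrac12 d_{\mathbf{H}^2}(i,i+t)\bigr)=|t|/2$; your version instead treats a general element $(b,w)$ head-on, which forces you to make explicit the two points that the paper's one-variable reduction quietly subsumes: the upper bound $|(b,1)|_S\le C\log(2+|b|)+C'$ via the conjugation trick, and the fact that the height $|\mu(w)|$ of the point $h(b,w)$ is controlled by $\log|\mu(w)|\le C|w|_{\F_d}$, so the $\F_d$-coordinate absorbs the correction in the $\cosh$ distance formula. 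This is a genuinely more self-contained writeup of the same argument, at the cost of a longer computation. Two cosmetic remarks: if $\mu(w_0)<0$ the conjugate of $(t,1)$ by $w_0^k$ is $((-1)^k\lambda^kt,1)$, so one should either replace $w_0$ by $w_0^2$ or adjust the sign of $t$; and the case $|b|\le 1$ (where $k$ would be nonpositive) should be dispatched separately, trivially.
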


\begin{proof} Endow $\R$ with the metric induced from word length on $\R\rtimes_\mu\F_d$. Clearly, it is enough to see that the map $\alpha|_{\R}:\R\rightarrow\mathbf{H}^2:t\mapsto i+t$ is a quasi-isometric embedding. Now, since $\mu$ has infinite image, the subgroup $\R$ is exponentially distorted into $\R\rtimes_\mu\F_d$, i.e the word length of $t$ is equivalent to $\log|t|$ for $|t|\gg 0$. On the other hand, a well-known formula in hyperbolic geometry (see e.g.\ Theorem 7.2.1 in \cite{Beard}) gives $\sinh\left(\frac{1}{2}d_{\mathbf{H}^2}(i,i+t)\right)=\frac{|t|}{2}$, so the result follows. 
\end{proof}

\begin{Prop}\label{n=1bis} If $n=1$ and $\mu$ has infinite image, then $G$ embeds quasi-isometrically into a product of four 3-regular trees.
\end{Prop}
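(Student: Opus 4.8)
The plan is to reduce Proposition \ref{n=1bis} to the hyperbolic-plane embedding provided by Lemma \ref{hyperb}, and then to feed $\mathbf{H}^2$ into the product-of-trees machinery already used in the $d=1$ case. By Lemma \ref{biLip1}, the group $G$ embeds quasi-isometrically into $\overline{\iota(G)}\times(\R\rtimes_\mu\F_d)$, and $\overline{\iota(G)}\subset\Aut(\tilde{X})$ acts on the locally finite tree $\tilde{X}$, which embeds isometrically into a single $3$-regular tree $\mathcal{T}$ (after subdividing edges, a bounded-degree tree quasi-isometrically embeds into $\mathcal{T}$). So it suffices to embed $\R\rtimes_\mu\F_d$ quasi-isometrically into a product of finitely many copies of $\mathcal{T}$, and to count copies so that the total is four.

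First I would apply Lemma \ref{hyperb}: since $n=1$ and $\mu$ has infinite image, the map $(\psi,h):\R\rtimes_\mu\F_d\to\F_d\times\mathbf{H}^2$ is a quasi-isometric embedding. Now $\F_d$ is quasi-isometric to its Cayley graph, a $(2d)$-regular tree, which embeds quasi-isometrically into one copy of $\mathcal{T}$. For the $\mathbf{H}^2$ factor, I would invoke the same tool as in Proposition \ref{eigenvalues}, namely \cite[Theorem 1.2]{BDS}: the visual boundary $\partial_\infty\mathbf{H}^2$ is a circle, a doubling metric space of topological dimension $1$, so $\mathbf{H}^2$ embeds quasi-isometrically into a product of two copies of $\mathcal{T}$. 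Combining, $\R\rtimes_\mu\F_d$ embeds quasi-isometrically into $\mathcal{T}\times(\mathcal{T}\times\mathcal{T})$, a product of three $3$-regular trees. Adjoining the tree coming from $\Aut(\tilde{X})$ via Lemma \ref{biLip1} gives a product of four $3$-regular trees, as claimed.

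The only genuinely delicate point is the bookkeeping of the number of tree factors, and in particular making sure the $\mathbf{H}^2$ factor really costs only two trees rather than more; this is exactly the content of the boundary-dimension computation, and it parallels the $n_\pm=1$ subcase of Proposition \ref{eigenvalues}, where an $n_\pm$-dimensional negatively curved manifold with $(n_\pm-1)$-dimensional doubling boundary embeds into $n_\pm$ trees. Here $\mathbf{H}^2$ has $n_\pm=2$, giving two trees. I expect no serious obstacle beyond citing \cite{BDS} and \cite{BoSc} correctly and verifying that each intermediate map composes with the previous quasi-isometric embeddings; all the analytic work has already been isolated in Lemma \ref{hyperb}.
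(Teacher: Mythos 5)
Your argument is correct and follows essentially the same route as the paper: combine Lemma \ref{biLip1} and Lemma \ref{hyperb} to embed $G$ quasi-isometrically into $\tilde{X}\times\mathbf{H}^2\times\F_d$, then use \cite{BDS} to replace $\mathbf{H}^2$ by two copies of the $3$-regular tree, for a total of four. The extra detail you supply (subdividing $\tilde{X}$, the Cayley tree of $\F_d$, the boundary-dimension count for $\mathbf{H}^2$) is exactly what the paper leaves implicit.
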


\begin{proof} By Lemma \ref{biLip1} and Lemma \ref{hyperb}, the group $G$ embeds quasi-isometri\-cally into $\tilde{X}\times\mathbf{H}^2\times \F_d$, where $\tilde{X}$ is the Bass-Serre tree of $G$. By \cite{BDS}, the hyperbolic plane $\mathbf{H}^2$ embeds quasi-isometrically into a product of two 3-regular trees.
\end{proof}

Theorem \ref{classofT} in the introduction follows by combining Propositions \ref{d=0}, \ref{eigenvalues}, \ref{n=1} and \ref{n=1bis}. When $G$ is non-amenable, $G\apx\mathcal{T}$ follows from Proposition \ref{amen}.

\end{document}